\tikzstyle{vertex}=[circle, draw, inner sep=0pt, minimum size=4pt]
\definecolor{darkgreen}{cmyk}{.9,0,.9,.20}
\newcommand{\ben}{\begin{enumerate}}
\newcommand{\een}{\end{enumerate}}
\newcommand{\ble}{\begin{lem}}
\newcommand{\ele}{\end{lem}}
\newcommand{\bth}{\begin{thm}}
\renewcommand{\eth}{\end{thm}}
\newcommand{\bpr}{\begin{prop}}
\newcommand{\epr}{\end{prop}}
\newcommand{\bco}{\begin{cor}}
\newcommand{\eco}{\end{cor}}
\newcommand{\bcon}{\begin{conj}}
\newcommand{\econ}{\end{conj}}
\newcommand{\bde}{\begin{defn}}
\newcommand{\ede}{\end{defn}}
\newcommand{\bex}{\begin{exa}}
\newcommand{\eex}{\end{exa}}
\newcommand{\barr}{\begin{array}}
\newcommand{\earr}{\end{array}}
\newcommand{\btab}{\begin{tabular}}
\newcommand{\etab}{\end{tabular}}
\newcommand{\beq}{\begin{equation}}
\newcommand{\eeq}{\end{equation}}
\newcommand{\bea}{\begin{eqnarray*}}
\newcommand{\eea}{\end{eqnarray*}}
\newcommand{\bal}{\begin{align*}}
\newcommand{\bce}{\begin{center}}
\newcommand{\ece}{\end{center}}
\newcommand{\bpi}{\begin{picture}}
\newcommand{\epi}{\end{picture}}
\newcommand{\bpp}{\begin{picture}}
\newcommand{\epp}{\end{picture}}
\newcommand{\bfi}{\begin{figure} \begin{center}}
\newcommand{\efi}{\end{center} \end{figure}}
\newcommand{\bprf}{\begin{proof}}
\newcommand{\eprf}{\end{proof}\medskip}
\newcommand{\bbR}{{\mathbb R}}
\newcommand{\cB}{\mathcal{B}}
\newcommand{\cC}{\mathcal{ C}}
\newcommand{\cF}{\mathcal{F} }
\newcommand{\cH}{\mathcal{ H}}
\newcommand{\cP}{{\mathcal{P}}}
\newcommand{\cQ}{\mathcal{ Q}}
\newcommand{\cR}{\mathcal{ R}}
\newcommand{\cS}{\mathcal{S}}
\newcommand{\cT}{\mathcal{ T}}
\newcommand{\cY}{\mathcal{ Y}}
\newcommand{\vv}{\mathbf{v}}
\newcommand{\ww}{\mathbf{w}}
\newcommand{\RR}{\mathbb{R}}
\DeclareMathOperator{\des}{des}
\DeclareMathOperator{\Pyr}{Pyr}
\DeclareMathOperator{\Conv}{ConvexHull}
\DeclareMathOperator{\simp}{\Delta}
\DeclareMathOperator{\vol}{Vol}
\DeclareMathOperator{\nvol}{nVol}
\DeclareMathOperator{\relvol}{relVol}
\DeclareMathOperator{\ehr}{Ehr}
\newcommand{\e}{\textbf{e}}
\title{Partial Permutohedra}
\author[Behrend]{Roger E. Behrend}
\address[R.~E.~Behrend]{School of Mathematics, Cardiff University, Cardiff, UK}
\email{\textcolor{blue}{\href{mailto:}{behrendr@cardiff.ac.uk}}}
\author[Castillo]{Federico Castillo}
\address[F. Castillo]{Departamento de Matem\'aticas, Pontificia Universidad Cat\'olica de Chile, Santiago, Chile}
\email{\textcolor{blue}{\href{mailto:}{federico.castillo@mat.uc.cl}}}
\author[Chavez]{Anastasia Chavez}
\address[A. Chavez]{Department of Mathematics and Computer Science, Saint Mary's College of California, Moraga, CA, USA}
\email{\textcolor{blue}{\href{mailto:}{amc59@stmarys-ca.edu}}}
\author[Diaz-Lopez]{Alexander Diaz-Lopez}
\address[A. Diaz-Lopez]{Department of Mathematics and Statistics, Villanova University, Villanova, PA, USA}
\email{\textcolor{blue}{\href{mailto:}{alexander.diaz-lopez@villanova.edu}}}
\author[Escobar]{Laura Escobar}
\address[L. Escobar]{Department of Mathematics, University of California Santa Cruz, Santa Cruz, CA, USA}
\email{\textcolor{blue}{\href{mailto:}{lauraescobar@ucsc.edu}}}
\author[Harris]{Pamela E. Harris}
\address[P.~E. Harris]{Department of Mathematical Sciences, University of Wisconsin Milwaukee, Milwaukee, WI, USA}
\email{\textcolor{blue}{\href{mailto:peharris@uwm.edu}{peharris@uwm.edu}}}
\author[Insko]{Erik Insko}
\address[E.~Insko]{Department of Mathematics, Central College, Pella, IA, USA}
\email{\textcolor{blue}{\href{mailto:inskoe@central.edu}{inskoe@central.edu}}}
\newtheorem{theorem}{Theorem}[section]
\newtheorem{proposition}[theorem]{Proposition}
\newtheorem{lemma}[theorem]{Lemma}
\newtheorem{corollary}[theorem]{Corollary}
\newtheorem{conjecture}[theorem]{Conjecture}
\newtheorem{oproblem}[theorem]{Open~Problem}
\theoremstyle{definition}
\newtheorem{remark}[theorem]{Remark}
\newtheorem{definition}[theorem]{Definition}
\newtheorem{example}[theorem]{Example}
\theoremstyle{remark}
\numberwithin{equation}{section}
\keywords{Polytopes, lattice polytopes, permutohedra, Ehrhart polynomials}
\subjclass[2020]{52B05, 52B11, 52B12, 52B20, 52A38}
\begin{document}
\begin{abstract}
Partial permutohedra are lattice polytopes which were recently introduced  and studied by Heuer and Striker.
For positive integers $m$ and $n$, the partial permutohedron~$\cP(m,n)$ is the convex hull of all vectors in $\{0,1,\ldots,n\}^m$ whose nonzero entries are distinct. 
We study the face lattice, volume and Ehrhart polynomial of $\cP(m,n)$,  and our methods and results include the following. 
For any $m$ and $n$, we obtain a  bijection between the nonempty faces of $\cP(m,n)$ and certain chains of subsets of $\{1,\dots,m\}$,  thereby confirming a conjecture of Heuer and Striker, and we then use this characterization of faces to obtain a closed expression for the $h$-polynomial of~$\cP(m,n)$.  
For any $m$ and $n$ with $n\ge m-1$, we  use a pyramidal subdivision of $\mathcal{P}(m,n)$ to establish a recursive formula for the normalized volume of $\mathcal{P}(m,n)$, from which we then obtain closed expressions for this volume. 
We also use a sculpting  process (in which $\cP(m,n)$ is reached by successively removing certain pieces from a simplex or hypercube) to obtain closed expressions for the  Ehrhart polynomial of $\cP(m,n)$ with arbitrary $m$ and fixed $n\le 3$, the normalized volume of~$\cP(m,4)$ with arbitrary $m$, and the Ehrhart polynomial of $\cP(m,n)$ with fixed $m\le4$ and arbitrary $n\ge m-1$.
\end{abstract} 

\maketitle

\section{Introduction}\label{section:intro}
Computing the volume of a polytope is hard, even when the complete face structure is known~\cite{khachiyan}.
In fact, few exact volume formulae have been discovered in much generality.
Stanley gave a notable volume formula for the regular permutohedron $\Pi(1,2,\dots,m)$, specifically that its relative volume is $m^{m-2}$~\cite[Example~3.1]{stanley}.
More generally, Postnikov~\cite{postnikov} studied the permutohedron $\Pi(z_1,\ldots,z_m)$ (i.e., the convex hull of all vectors obtained by permuting the entries of an arbitrary vector $(z_1,\ldots,z_m)$
in~$\RR^m$) as well as a class of generalized
permutohedra, and obtained
three distinct formulae for the relative volume of $\Pi(z_1,\dots,z_m)$~\cite[Theorems~3.1,~5.1 and~17.1]{postnikov}, each one subtle in its own way.

In this paper\footnote{An extended 12-page abstract of this work has been published in the refereed proceedings of the~35th International Conference on Formal Power Series and Algebraic Combinatorics (University of California, Davis, July 17-21, 2023)~\cite{PP-FPSAC}.}, we study a related family of polytopes  called \emph{partial permutohedra}, which were introduced recently by Heuer and Striker~\cite{HS}.  For positive integers~$m$ and~$n$, the partial permutohedron~$\cP(m,n)$
is the convex hull of all vectors in $\{0,1,\ldots,n\}^m$ whose nonzero entries are distinct.  
It immediately follows that~$\cP(m,n)$ is a lattice polytope.

Partial permutohedra have connections to several other previously-studied polytopes, including the following. 
In Section~\ref{sec:gp}, we show that~$\cP(m,n)$ with any~$m$ and~$n$ is,
after being lifted from~$\RR^m$ to~$\RR^{m+1}$, a case of a generalized permutohedron of~\cite{postnikov}.  
In Corollary~\ref{cor-anti-block}, we show that partial permutohedra are anti-blocking versions of certain permutohedra: specifically, $\cP(m,n)$
is an anti-blocking version of $\Pi(0,\ldots,0,1,2,\dots,n)$ 
(with $m-n$ zeros) for $n\le m-2$, or $\Pi(n-m+1,n-m+2,\dots,n)$ 
for $n\ge m-1$.  
Additionally, $\cP(m,n)$ with any $n\ge m$ is combinatorially equivalent to the $m$-stellohedron (see Remark~\ref{rem-stell}), where the connection to the $m$-stellohedron for $m=n$ was noted in~\cite{HS}.
This polytope was originally defined in ~\cite[Section 10.4]{PRW}, and has been used recently in connection with matroid theory~\cite{stellahedron}.
In Remarks~\ref{rem-winpolytope} and~\ref{rem-parkingfunctionpolytope}, we note
that~$\cP(m,m-1)$ with any $m\ge2$ is the polytope  
of win vectors of the complete graph~$K_m$~\cite{bartels1997polytope},
and (after translation by $(1,\ldots,1)$) the polytope of parking functions of length~$m$~\cite{AW,AMM_solution,AMM_problem}.
Furthermore, as noted in Remark~\ref{rem-parkingfunctionpolytopegen}, it has
recently been shown in~\cite{HanadaLentferVindasMelendez} that~$\cP(m,n)$ with any $n\ge m-1$ is (again after translation by $(1,\ldots,1)$) the polytope
of certain generalized parking functions of length~$m$.

In Section~\ref{section:prelim}, we provide relevant background information on polytopes, and review
the results of Heuer and Striker~\cite{HS} on partial permutohedra.

In Section~\ref{sec-faces}, we expand on the work of Heuer and Striker~\cite{HS} by obtaining,
in Theorem~\ref{thm-faces-chains}, a bijection between the nonempty faces of~$\cP(m,n)$ and certain chains of subsets of $\{1,\dots,m\}$, 
for any~$m$ and~$n$, thus proving Conjecture~5.25 of~\cite{HS}.
An alternative proof of the conjecture was recently obtained independently 
by Black and Sanyal~\cite[Theorem~7.5]{black2022flag}.
We then use this characterization of the faces of~$\cP(m,n)$ to obtain, 
in Theorem~\ref{thm-h-vector-bijective},
a closed expression for the $h$-polynomial of~$\cP(m,n)$ with any $m$ and $n$ in terms of Eulerian polynomials.

In Section~\ref{sec:volppm}, we consider the volume of~$\cP(m,n)$ for $n\ge m-1$.
In Theorem~\ref{thm:recursion for volume of P(m,n)}, we 
use a technique, in which~$\cP(m,n)$ is subdivided into certain pyramids, to establish a recursive formula for the normalized volume of~$\cP(m,n)$ with $n\geq m-1$.
Using this recursion, we then obtain, in Theorem~\ref{thm:closedformulav(m,n)}, closed formulae for the normalized volume of~$\cP(m,n)$ with $n\geq m-1$. Our proofs of Theorems~\ref{thm:recursion for volume of P(m,n)} and~\ref{thm:closedformulav(m,n)} follow methods similar to those in~\cite[Section~4]{AW} and~\cite[Part~(d)]{AMM_solution} for computations of the volume of the polytope of parking functions of length~$m$ (and hence of~$\cP(m,m-1)$).
Another proof of Theorem~\ref{thm:closedformulav(m,n)} was recently obtained independently 
by Hanada, Lentfer and Vindas-Meléndez~\cite[Corollary~3.29]{HanadaLentferVindasMelendez}. 
We also, in~\eqref{eq-vol-drac-seq} and~\eqref{eq-vol-drac-seq2}, provide
certain expressions for the normalized volume of~$\cP(m,n)$ with $n\geq m-1$, which
are obtained using results for the volumes of generalized permutohedra~\cite[Theorems~9.3 and~10.1]{postnikov}.
The fact that we are able to obtain the closed formulae of Theorem~\ref{thm:closedformulav(m,n)} for the volume of~$\cP(m,n)$ with $n\geq m-1$ is related to the fact that the volume of $\Pi(1,\ldots,m)$
(or $\Pi(0,\ldots,m-1)$) is given by a simple closed formula.
However, as explained in Remark~\ref{rem-arg},
because we do not have a closed formula for the volume of $\Pi(0,\dots,0,1,\ldots,n)$
(with at least two 0's), finding a general formula for the volume
of~$\cP(m,n)$ with $m>n+1$ becomes much more difficult.
Moreover, for $m>n$ the combinatorial type of~$\cP(m,n)$ depends on both~$m$ and~$n$ 
(whereas, as explained in Remark~\ref{rem-isomorphic}, for $n\ge m$ it depends only on $m$), which further suggests that finding a completely general closed formula for the volume in this case is unlikely.

In Sections~\ref{sec:volppn}--\ref{sec:Ehrhart}, we continue to address the problem of computing the volumes of partial permutohedra, and we also study the Ehrhart polynomials of some cases.
One of our main techniques in these sections is based on the idea, exploited by algebraists in the days of yore, of completing the (hyper)cube. 
We start with a lattice polytope for which we know the  volume or Ehrhart polynomial, and then carefully remove pieces until we reach the polytope of interest. 
This idea makes itself apparent after analyzing certain expressions, as given in Example~\ref{ex:formulas}, for the normalized volume of~$\cP(m,n)$ for small fixed $m$ and any $n\ge m-1$.  These expressions are polynomials in~$n$,
with all coefficients negative, except in the leading term which is $m!\,n^m$, the normalized volume of a hypercube in $\RR^m$ of side-length~$n$.  This suggests that we start with a hypercube from which we can sculpt a partial permutohedron, and this is precisely what we do in Section~\ref{sec:Ehrhart}.  This sculpting approach has been used recently to compute the Ehrhart polynomials of matroid polytopes starting from the hypersimplex. See~\cite{ferroni} for sparse paving matroids and~\cite{jeremy} for paving matroids.

In Section~\ref{sec:volppn}, we use a sculpting process, in which~$\cP(m,n)$ is 
sculpted from a $\binom{n+1}{2}$-dilated standard $m$-simplex,
to compute the normalized volume of~$\cP(m,n)$ with arbitrary~$m$ and fixed $n\leq 4$.
In Theorem~\ref{theorem:pm2}, we show that the normalized volume of~$\cP(m,2)$ is $3^m-3$, thereby confirming Conjecture~5.30 of~\cite{HS}, and in Theorems~\ref{theorem:Pm3} and~\ref{theorem:Pm4}, we
give explicit formulae for the normalized volumes of~$\cP(m,3)$ and~$\cP(m,4)$.  Theorems~\ref{theorem:pm2} and~\ref{theorem:Pm3}
also provide explicit expressions for the 
Ehrhart polynomials of~$\cP(m,2)$ and~$\cP(m,3)$ with arbitrary~$m$.
By examining the details of each case with $n\le4$, the reader will appreciate that the steps involved become progressively harder, and that it may be impractical to proceed beyond $n=4$ using these methods.
Nevertheless, in Conjecture~\ref{conj-vmn}, we 
use the formulae obtained for $n\le 4$ to 
conjecture that for arbitrary $n$, the normalized volume of~$\cP(m,n)$ can be expressed in a certain form.

In Section~\ref{sec:Ehrhart}, we return to the case of $n\geq m-1$,
and use a scuplting process, in which~$\cP(m,n)$ is sculpted
from an $m$-cube of side-length $n$, to obtain explicit expressions for the
Ehrhart polynomial of~$\cP(m,n)$ with fixed $m\le 4$ and 
arbitrary $n\geq m-1$.  See, for example, Theorems~\ref{thm:p3n} and~\ref{thm:p4n} for the cases $m=3$ and $m=4$, respectively.  We also, in~\eqref{eq-Ehr-drac-seq}, provide an expression for the Ehrhart polynomial of~$\cP(m,n)$ with $n\geq m-1$, which is obtained using a result for Ehrhart polynomials of generalized permutohedra~\cite[Theorem~11.3]{postnikov}. 
Finally, in Conjecture~\ref{conj-Ehr-expl}, we conjecture a closed formula for the Ehrhart polynomial of~$\cP(m,n)$ with $n\geq m-1$.

\subsection*{Acknowledgements}
The authors thank Spencer Backman, Luis Ferroni, Jessica Striker and Shaun Sullivan for helpful exchanges, and the anonymous referees for valuable suggestions.
We thank the American Institute of Mathematics for research support through a SQuaRE grant.  Behrend was  partially supported by Leverhulme Trust Grant RPG-2019-083.
Castillo was partially supported by FONDECYT Grant 1221133.
Escobar was partially supported by NSF Grant DMS-1855598 and NSF CAREER Grant DMS-2142656.
Harris was supported through a Karen Uhlenbeck EDGE Fellowship.

\section{Background} \label{section:prelim}
We work over the Euclidean space $\mathbb{R}^m$ with basis $\{\e_1,\dots,\e_m\}$, and the dot product $\langle \e_i,\e_j\rangle=\delta_{ij}$, where $\delta_{ij}$ is the Kronecker delta function.
For convenience, we will sometimes write~$\mathbf{e}_0$ for the origin~$\mathbf{0}$.
We will also use the notation $[m]$ for the set $\{1,\ldots,m\}$, and $\mathfrak{S}_m$ for the set of permutations on~$[m]$.
\subsection{Polytopes}\label{subsection:polytopes} 	
A \emph{polytope} is the convex hull of finitely many points in $\mathbb{R}^m$.
Alternatively, a polytope is a bounded solution set of a finite system of linear inequalities.
We say that a linear inequality $\langle \mathbf{a},\mathbf{x}\rangle\geq b$ is valid on a polytope~$\cP$ if every point of~$\cP$ satisfies it.
A valid linear inequality defines a \emph{face} $\cF$ of $\cP$, namely $\cF=\cP\cap\{\mathbf{x}\in\mathbb{R}^m\mid\langle \mathbf{a},\mathbf{x}\rangle= b\}$.
Faces of dimension~$0$,~$1$ or $\dim(\cP)-1$ are called \emph{vertices}, \emph{edges} or \emph{facets}, respectively.

A polytope whose vertices are all integer points is called a \emph{lattice polytope}.
Two important examples of lattice polytopes are the standard $m$-simplex and the regular permutohedron. The standard $m$-simplex 
$\Delta_m=\textrm{ConvexHull}(\{\mathbf
{0},\e_1,\e_2,\ldots,\e_m\})$ has $m+1$ facets: specifically,~$m$ facets induced by the inequalities $x_i\geq 0$ for $i\in[m]$, and an additional facet induced by the inequality $x_1+\dots+x_m\leq 1$. We will sometimes
also use $\Delta_k$ to denote a $k$-simplex $\textrm{ConvexHull}(\{\mathbf{0}\}\cup\{\e_i\mid i\in S\})$ in~$\RR^m$, where~$S$ is a $k$-element subset of $[m]$.
The regular permutohedron, as introduced in Section~\ref{section:intro}, is 
$\Pi(1,2,\ldots, m)=\textrm{ConvexHull}(\{(\sigma(1),\sigma(2),\ldots, \sigma(m))\mid\sigma \in \mathfrak{S}_m\})$. 
Note that $\Pi(1,2,\ldots,m)$ is an $(m-1)$-dimensional polytope in $\mathbb{R}^m$, with every $\mathbf{x}\in\Pi(1,2,\ldots,m)$ satisfying $x_1+x_2+\ldots+x_m=\binom{m+1}{2}$.
	
\subsection{Volumes}\label{subsection:volumes}
There is a unique translation-invariant measure on $\mathbb{R}^m$, up to a scalar.
This scalar is often chosen so that volume formulae are simpler to state, and hence the choice of scalar can vary. The most familiar choice is such that the volume of the hypercube $[0,1]^m$ is~$1$. We call this volume the Euclidean/Lebesgue volume, or simply the volume, and denote it as $\vol$. Geometric objects which use hypercubes as their building blocks usually have simpler volume formulae when expressed using the Euclidean volume. 
 	    
When working with lattice polytopes, we often use the standard $m$-simplex $\Delta_m$ as our building block. The Euclidean volume of $\Delta_m$ is $1/m!$, since the hypercube $[0,1]^m$ can be triangulated into~$m!$ congruent copies of $\Delta_m$.
To simplify volume expressions of full-dimensional polytopes in $\mathbb{R}^m$, we use the normalized volume, denoted $\nvol$, and defined such that the normalized volume of any lattice $m$-simplex with vertices $ \{\mathbf{0},\vv_1,\vv_2,\ldots,\vv_m \} $ is the determinant of the matrix whose rows are $\vv_1,\vv_2,\ldots,\vv_m$.  With this definition, we have
\begin{equation*} \nvol(\Delta_m) =1, \text{ and } \nvol([0,1]^m)=m! \vol([0,1]^m)=m!. \label{eq:nvol} \end{equation*}  
Moreover, the normalized volume of any full-dimensional lattice polytope $\cP$ in $\mathbb{R}^m$ is $\nvol(\cP)=m!\vol(P)$, which is an integer, since we can triangulate such a polytope into lattice simplices, each of which has an
integer normalized volume.
 	    
\subsection{Relative volume of non-full-dimensional polytopes}\label{sec:nonfulldimvolume}
For polytopes in $\mathbb{R}^m$ that are not full-dimensional, such as the $(m-1)$-dimensional
regular permutohedron $\Pi(1,\dots,m)$, the volume needs to be defined carefully. 
 	
Let $\mathcal{P}\subseteq \mathbb{R}^m$ be a $d$-dimensional lattice polytope, and $\mathcal{A}$ be the affine hull of~$\mathcal{P}$. 
If $\textbf{0}\in \mathcal{A}$ (i.e.,~$\mathcal{A}$ is a linear subspace), then let $L$ be the $d$-dimensional lattice $L=\mathcal{A}\cap \mathbb{Z}^m$, and $\{\vv_1,\ldots,\vv_d\}$ be a $\mathbb{Z}$-basis for $L$. We define the \emph{relative} volume  $\relvol$ of~$\mathcal{P}$ as the unique translation invariant measure on $\mathcal{A}$ such that $\relvol \left(\textrm{ConvexHull}(\{\textbf{0},\vv_1,\ldots,\vv_d\}) \right) = 1/d!$.
This definition is independent of the choice of basis $\{\vv_1,\ldots,\vv_d\}$, since $\relvol \left(\textrm{ConvexHull}(\{\textbf{0},\ww_1,\ldots,\ww_d\})\right) = 1/d!$ for any other $\mathbb{Z}$-basis $\{ \ww_1, \dots, \ww_d \}$ of $\mathcal{A}$
(which follows from the fact that an invertible $d\times d$ integer-entry matrix
with an integer-entry inverse is unimodular, i.e., has determinant $\pm1$). 
If $\textbf{0}\notin\mathcal{A}$, then we use a translate $\mathcal{A'}$ of $\mathcal{A}$ that passes through $\textbf{0}$, and compute the volume of the translated polytope on $\mathcal{A'}$. Since the Euclidean volume is translation invariant, this definition is independent of the choice of~$\mathcal{A'}$.

\begin{remark}
If $d=m$, then the affine hull $\mathcal{A}$ of~$\mathcal{P}$ is $\mathbb{R}^m$, $\{\e_1,\ldots, \e_m\}$ is a basis for $L=\mathcal{A}\,\cap\,\mathbb{Z}^m$, and $\textrm{ConvexHull}(\{\textbf{0},\e_1,\ldots,\e_m\})$ is the standard $m$-simplex $\Delta_m$. 
Thus, in this case, the relative volume agrees with the full-dimensional Euclidean/Lebesgue volume,
so in general we use volume to mean the standard Lebesgue measure if $\cP$ is full-dimensional or the relative version if $\cP$ is non-full-dimensional.
\end{remark}

\subsection{Pyramids}
Some of the main computations in this paper involve the determination of volumes of pyramids. Given a polytope~$\mathcal{B}$ in $\RR^m$ and a point~$\mathbf{v}\in\RR^m$ not in the affine hull of $\mathcal{B}$, we call $\textrm{Pyr}(\mathcal{B},\mathbf{v})=\textrm{ConvexHull}(\mathcal{B}\cup\{\mathbf{v}\})$ the \emph{pyramid} over the base $\mathcal{B}$ with apex $\mathbf{v}$. 

Given a hyperplane $\mathcal{H} = \{ \mathbf{x} \in \mathbb{R}^m \mid \langle \mathbf{n}, \mathbf{x} \rangle = b \}$, with $\mathbf{n}\in\mathbb{Z}^m\setminus\{\mathbf{0}\}$, $b\in\mathbb{Z}$ and such that the greatest common divisor of the entries of $\mathbf{n}$ is~$1$, recall that the \textit{lattice distance} of a point $\mathbf{v}\in\mathbb{Z}^m$ to $\mathcal{H}$ is the absolute value of $\langle \mathbf{n}, \mathbf{v} \rangle - b$. 

Let $\mathcal{B}$ be a lattice polytope of codimension~$1$ in $\mathbb{R}^m$ and $\textrm{Pyr}(\mathcal{B},\mathbf{v})$ be the $m$-dimensional pyramid over~$\mathcal{B}$ with apex $\mathbf{v}\in\mathbb{Z}^m$.
We then have the following formula for the volume:
\begin{equation}\label{eq-pyr-vol}
\vol\left(\textrm{Pyr}(\mathcal{B},\mathbf{v})\right)=\relvol \mathcal{B}\cdot D\cdot(1/m),
\end{equation}
where $D$ is the lattice distance from $\mathbf{v}$ to the affine hull $\mathcal{A}$ of $\mathcal{B}$ (and where $\mathcal{A}$ is a hyperplane since~$\mathcal{B}$ has dimension $m-1$).
Thus, 
$$\nvol\left(\textrm{Pyr}(\mathcal{B},\mathbf{v})\right)=\relvol \mathcal{B}\cdot D\cdot(m-1)!.$$
 		
Volumes of arbitrary polytopes are notoriously hard to compute.
One strategy is to triangulate a polytope, and compute the volume of each simplex using determinants.
We will use a related technique, involving decomposition into pyramids, which relies on the following well-known result.
\begin{lemma}[Lemma 4.3.2 in \cite{triangulations}]\label{lem:coning}
Let $\cP$ be a full-dimensional lattice polytope and $\mathbf{v}$ be a vertex of $\cP$.
For each facet $\mathcal{F}$ of $\cP$ that does not contain $\mathbf{v}$, form the pyramid $\Pyr(\mathcal{F},\mathbf{v})$.
The collection of these pyramids for all such facets gives a polyhedral subdivision of $\cP$, and thus $$\vol (\cP) = \sum_{\substack{\text{facets }\cF\text{ of }\cP\\\mathbf{v}\notin\mathcal{F}}}\vol\left(\Pyr(\mathcal{F},\mathbf{v})\right).$$
\end{lemma}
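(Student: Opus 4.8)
The plan is to verify the two assertions of the lemma in turn: first that the sets $\Pyr(\cF,\mathbf{v})$, ranging over facets $\cF$ of $\cP$ with $\mathbf{v}\notin\cF$, together with all of their faces, form a polyhedral subdivision of $\cP$; and then to deduce the volume identity, which is immediate once one knows the pairwise intersections of these pyramids are lower-dimensional and hence of Lebesgue measure zero. After replacing $\RR^m$ by the affine hull of $\cP$ we may assume $\cP$ is full-dimensional. Observe also that each $\Pyr(\cF,\mathbf{v})$ is a genuine $m$-dimensional pyramid: a facet satisfies $\cF=\cP\cap\mathrm{aff}(\cF)$, so $\mathbf{v}\in\cP$ and $\mathbf{v}\notin\cF$ force $\mathbf{v}\notin\mathrm{aff}(\cF)$, as required by the definition of $\Pyr$.

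First I would show the pyramids cover $\cP$. One inclusion is trivial since $\Pyr(\cF,\mathbf{v})\subseteq\cP$ by convexity. For the reverse, take $\mathbf{x}\in\cP$ with $\mathbf{x}\ne\mathbf{v}$ and follow the ray from $\mathbf{v}$ through $\mathbf{x}$; since $\cP$ is bounded it leaves $\cP$ at a boundary point $\mathbf{y}$, with $\mathbf{x}\in[\mathbf{v},\mathbf{y}]$. The point $\mathbf{y}$ lies in the relative interior of a unique smallest face $G$ of $\cP$, and $G$ cannot contain $\mathbf{v}$: otherwise the line through $\mathbf{v}$ and $\mathbf{y}$ would stay in $\mathrm{aff}(G)\subseteq\cP$ just beyond $\mathbf{y}$, contradicting the choice of $\mathbf{y}$. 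Hence some facet $\cF\supseteq G$ has $\mathbf{v}\notin\cF$, and $\mathbf{x}\in[\mathbf{v},\mathbf{y}]\subseteq\Pyr(\cF,\mathbf{v})$.

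Next I would show that two such pyramids meet in a common face, the heart of the matter being the identity $\Pyr(\cF_1,\mathbf{v})\cap\Pyr(\cF_2,\mathbf{v})=\Pyr(\cF_1\cap\cF_2,\mathbf{v})$ for distinct facets $\cF_1,\cF_2$ avoiding $\mathbf{v}$. Write $\cF_i=\cP\cap\{\langle\mathbf{a}_i,\cdot\rangle=b_i\}$ with $\langle\mathbf{a}_i,\cdot\rangle\le b_i$ valid on $\cP$, so $\langle\mathbf{a}_i,\mathbf{v}\rangle<b_i$. If $\mathbf{z}$ lies in both pyramids and $\mathbf{z}\ne\mathbf{v}$, then $\mathbf{z}=\mathbf{v}+s_i(\mathbf{f}_i-\mathbf{v})$ with $\mathbf{f}_i\in\cF_i$ and $0<s_i\le1$, so $\mathbf{f}_1$ lies on the ray from $\mathbf{v}$ through $\mathbf{z}$; restricted to that ray, the linear form $\langle\mathbf{a}_1,\cdot\rangle$ equals $b_1$ at $\mathbf{f}_1$, is $<b_1$ at $\mathbf{v}$, and is $\le b_1$ wherever the ray meets $\cP$, so it increases strictly along the ray and $\mathbf{f}_1$ must be the last point $\mathbf{y}$ of $\cP$ on it. Similarly $\mathbf{f}_2=\mathbf{y}$, so $\mathbf{y}\in\cF_1\cap\cF_2$ and $\mathbf{z}\in[\mathbf{v},\mathbf{y}]\subseteq\Pyr(\cF_1\cap\cF_2,\mathbf{v})$; the reverse inclusion is clear, and when $\cF_1\cap\cF_2=\emptyset$ the same computation shows the pyramids meet only in $\{\mathbf{v}\}$. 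Since $\cF_1\cap\cF_2$ is a face of $\cF_i$ not containing $\mathbf{v}$, the pyramid over it is a face of $\Pyr(\cF_i,\mathbf{v})$ by the standard description of the faces of a pyramid, so the intersection is a common face of the two pyramids.

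Combining these, the pyramids $\Pyr(\cF,\mathbf{v})$ and their faces form a polyhedral subdivision of $\cP$ into $m$-dimensional cells that overlap only in faces of dimension $\le m-1$; finite additivity of Lebesgue measure then yields $\vol(\cP)=\sum_{\cF}\vol(\Pyr(\cF,\mathbf{v}))$, the sum taken over facets $\cF$ with $\mathbf{v}\notin\cF$. I expect the main obstacle to be the intersection identity $\Pyr(\cF_1,\mathbf{v})\cap\Pyr(\cF_2,\mathbf{v})=\Pyr(\cF_1\cap\cF_2,\mathbf{v})$ — equivalently, that radial projection from $\mathbf{v}$ onto $\partial\cP$ is compatible with the facet structure; the covering property and the passage from the subdivision to the volume statement are then routine.
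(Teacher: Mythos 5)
Your proof is correct. The paper does not prove this lemma at all --- it is quoted as a known result (Lemma 4.3.2 of the cited book on triangulations) --- and your argument is the standard one: radial projection from $\mathbf{v}$ shows the pyramids cover $\cP$, and the supporting-hyperplane computation shows two pyramids meet exactly in the pyramid over $\cF_1\cap\cF_2$, which is a common face. One phrasing slip worth fixing: in the covering step you write that the line would ``stay in $\mathrm{aff}(G)\subseteq\cP$,'' but $\mathrm{aff}(G)$ is of course not contained in $\cP$; the correct justification is that $\mathbf{y}\in\mathrm{relint}(G)$ and $\mathbf{v}\in G$ imply $\mathbf{y}+\epsilon(\mathbf{y}-\mathbf{v})\in G\subseteq\cP$ for small $\epsilon>0$, contradicting the maximality of $\mathbf{y}$.
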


\subsection{Description of the partial permutohedron}
In this section, we introduce the partial permutohedron $\cP(m,n)$, for any positive integers $m$ and $n$, using the same approach as that used by Heuer and Striker~\cite[Section~5]{HS}.

We start by defining partial permutation matrices. 
\begin{definition}
\label{pperm}
For positive integers $m$ and $n$, an $m \times n$ \emph{partial permutation matrix} is an $m\times n$ matrix with at most one nonzero entry in each row and column, where any such nonzero entry is a~$1$. Equivalently, it is an $m \times n$ matrix $M$ with entries $M_{ij}$ in $\{0,1\}$, such that
\[\textstyle\sum_{i'=1}^{m} M_{i'j} \in \left\{0,1\right\}\text{ for all } 1 \leq j \leq n,\quad \sum_{j'=1}^{n} M_{ij'} \in \left\{0,1\right\}\text{ for all } 1 \leq i \leq m.\]
We denote the set of all $m \times n$ partial permutation matrices as~$P_{m,n}$. 
Given a partial permutation matrix $M\in P_{m,n}$, its \emph{one-line notation} $w(M)$ is a word $w_1 w_2 \ldots w_m$, where $w_i=j$ if there exists $j$ such that $M_{ij}=1$, and $w_i=0$ otherwise.
\end{definition}

It can be seen that $|P_{m,n}|=\sum_{k=0}^{\min(m,n)}\binom{m}{k}\binom{n}{k}k!$.

\begin{example}
Let $$M = \begin{pmatrix} 0 & 1 & 0 & 0 & 0 & 0 \\ 0 & 0 & 0 & 0 & 0 & 1 \\ 0 & 0 & 0 & 0 & 0 & 0 \\ 0 & 0 & 1 & 0 & 0 & 0 \\ 0 &  0& 0 & 0 & 0 & 0  \end{pmatrix}.$$ Then $w(M) = 26030$.
\end{example}

It was shown by Heuer and Striker~\cite[Proposition  5.3]{HS} that $w(P_{m,n})=\{w(M)\mid M\in P_{m,n}\}$ can be characterized as the set of all words of length~$m$ with entries  in $\left\{0,1,\ldots,n\right\}$ and for which the nonzero entries are distinct.

\begin{definition}
Let the \emph{partial permutohedron} \emph{$\cP(m,n)$} be the polytope given by the convex hull of all words in $w(P_{m,n})$, as vectors in $\mathbb{R}^{m}$.
Thus, $\cP(m,n)$ is the convex hull of all vectors in $\{0,1,\ldots,n\}^m$ whose nonzero entries are distinct.
\end{definition}

It follows from the definition that~$\cP(m,n)$ is a lattice polytope.
The dimension, vertices and facets of $\cP(m,n)$ were characterized by Heuer and Striker~\cite{HS}, as follows.

\begin{proposition}[Remark 5.5 in \cite{HS}]\label{p-dim}
The partial permutohedron $\cP(m,n)$ has dimension $m$.
\end{proposition}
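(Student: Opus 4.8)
The plan is to show directly that $\cP(m,n)$ is full-dimensional in $\RR^m$ by exhibiting $m+1$ affinely independent points. Since $\cP(m,n)\subseteq\RR^m$, we automatically have $\dim(\cP(m,n))\le m$, so the only content is the reverse inequality.

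First I would observe that the all-zeros word $00\cdots0$ lies in $w(P_{m,n})$ (it is the one-line notation of the zero matrix in $P_{m,n}$), so $\mathbf{0}\in\cP(m,n)$. Next, for each $i\in[m]$, the word with a single nonzero entry equal to $1$ in position $i$ lies in $w(P_{m,n})$ — it has entries in $\{0,1,\dots,n\}$ (using $n\ge1$) and its nonzero entries are vacuously distinct — so $\e_i\in\cP(m,n)$; concretely, $\e_i$ is the image under $w$ of the partial permutation matrix with a single $1$ in position $(i,1)$. Thus $\mathbf{0},\e_1,\dots,\e_m$ are $m+1$ points of $\cP(m,n)$, and they are affinely independent because $\e_1,\dots,\e_m$ are linearly independent. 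Hence $\dim(\cP(m,n))\ge m$, and combining with the trivial upper bound gives $\dim(\cP(m,n))=m$.

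There is essentially no obstacle here: the argument is a one-line observation once one notes that the standard basis vectors and the origin are themselves (vertices) of $\cP(m,n)$. The only point worth stating carefully is the membership of $\e_i$ in $\cP(m,n)$, which is where the hypothesis that $n$ is a positive integer is used. If a more self-contained presentation were wanted, one could alternatively cite that $\cP(m,n)$ contains the hypercube vertex set $\{0,1\}^m$ is false in general, so the simplex-based witnesses $\{\mathbf{0},\e_1,\dots,\e_m\}$ are the natural choice, and they already suffice.
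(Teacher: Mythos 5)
Your proof is correct: the paper itself states this result without proof (citing Remark~5.5 of~\cite{HS}), and your argument --- that $\mathbf{0},\e_1,\dots,\e_m$ all lie in $\cP(m,n)$ and are affinely independent, equivalently that $\cP(m,n)$ contains the standard simplex $\Delta_m=\cP(m,1)$ --- is exactly the natural one. The only blemish is the garbled final sentence of your last paragraph (``one could alternatively cite that \dots is false in general''), which should simply be deleted since it adds nothing to the already complete argument.
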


\begin{proposition}[Proposition 5.7 in \cite{HS}]\label{p-vertices}
The vertices of $\cP(m,n)$ are the vectors in $\bbR^m$ with entries of zero in any $m-k$ positions, and with the other $k$ entries being $n,n-1,\ldots,n-k+1$ in any order, where~$k$ ranges from $0$ to $\min(m,n)$.
It follows that $\cP(m,n)$ has $\sum_{k=0}^{\min(m,n)}\frac{m!}{(m-k)!}$ vertices.
\end{proposition}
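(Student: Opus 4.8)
The plan is to combine the standard fact that the vertices of the convex hull of a finite set~$S$ are precisely the extreme points of~$S$ with the description of~$\cP(m,n)$ as the convex hull of the \emph{admissible} points, that is, the vectors in $\{0,1,\dots,n\}^m$ whose nonzero entries are distinct. Call an admissible point \emph{saturated} if its nonzero entries are exactly $n,n-1,\dots,n-k+1$ for some~$k$ (necessarily $0\le k\le\min(m,n)$). I would establish (i) every saturated point is a vertex, and (ii) no unsaturated admissible point is a vertex. The vertex count then follows at once: for each~$k$ from~$0$ to $\min(m,n)$ there are $\binom{m}{k}$ choices for the set of nonzero positions and $k!$ orderings of $n,n-1,\dots,n-k+1$ among them, so the number of saturated points is $\sum_{k=0}^{\min(m,n)}\binom{m}{k}\,k!=\sum_{k=0}^{\min(m,n)}\frac{m!}{(m-k)!}$.

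For~(i), let~$v$ be saturated with nonzero entries in positions $i_1,\dots,i_k$, relabelled so that $v_{i_t}=n-t+1$, and suppose $v=\sum_p\lambda_p\,p$ with $\lambda_p>0$, $\sum_p\lambda_p=1$, and each~$p$ admissible. In each coordinate $j\notin\{i_1,\dots,i_k\}$ we have $v_j=0$ while $p_j\ge0$, so $p_j=0$ for every~$p$; thus each~$p$ is supported on $\{i_1,\dots,i_k\}$. Now induct on~$t$: assuming $p_{i_s}=n-s+1$ for all $s<t$ and all~$p$, the entry $p_{i_t}$ is either~$0$ or a nonzero value distinct from $n,n-1,\dots,n-t+2$, so $p_{i_t}\le n-t+1$; since $\sum_p\lambda_p p_{i_t}=v_{i_t}=n-t+1$, equality must hold for every~$p$. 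Hence $p=v$ for all~$p$, so~$v$ is extreme, hence a vertex.

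For~(ii), let~$v$ be admissible but not saturated, let~$S$ be its set of nonzero values, and $k=|S|\ge1$ (the zero vector is saturated). The crucial combinatorial point is that, since~$S$ is not the interval $\{n-k+1,\dots,n\}$, there exists a value $b\in S$ with $b\le n-1$ and $b+1\notin S$: indeed, if every $b\in S$ with $b\le n-1$ had $b+1\in S$, then iterating from $\min S$ would force $S=\{\min S,\dots,n\}=\{n-k+1,\dots,n\}$. Fix such a~$b$, let $l\ge1$ be maximal with $\{b,b-1,\dots,b-l+1\}\subseteq S$, let $i_1,\dots,i_l$ be the positions carrying these~$l$ values, and set $\mathbf{u}=\mathbf{e}_{i_1}+\dots+\mathbf{e}_{i_l}$. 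I would then verify that $v+\mathbf{u}$ and $v-\mathbf{u}$ are both admissible. Adding~$\mathbf{u}$ replaces the block of nonzero values $\{b-l+1,\dots,b\}$ by $\{b-l+2,\dots,b+1\}$, which stays inside $[1,n]$ because $b\le n-1$ and introduces no repeated nonzero value because $b+1\notin S$; subtracting~$\mathbf{u}$ replaces that block by $\{b-l,\dots,b-1\}$ (the value $b-l$ dropped if it is~$0$), which stays inside $[0,n]$ because $b\ge l$ (the least value in the block is a positive entry of~$v$, so $b-l+1\ge1$) and introduces no repeat because $b-l\notin S$ by maximality of~$l$. Since $v=\tfrac12(v+\mathbf{u})+\tfrac12(v-\mathbf{u})$ is a convex combination of two distinct admissible points, neither equal to~$v$, the point~$v$ is not extreme and hence not a vertex.

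The main obstacle is~(ii): the naive attempt to perturb a single coordinate, $v\mapsto v\pm\mathbf{e}_i$, breaks down exactly when a value and its predecessor both already lie in~$S$, and the remedy is to shift an entire maximal decreasing run of coordinates simultaneously. Pinning down a suitable~$b$ and checking admissibility of both shifted points—using $b\le n-1$, $b+1\notin S$, and the maximality of~$l$—is the only delicate part; step~(i) and the counting are routine.
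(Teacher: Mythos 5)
Your proof is correct. Note that the paper itself gives no proof of this proposition: it is quoted from Heuer and Striker (their Proposition~5.7), so there is no in-paper argument to compare against. Your two-step argument is complete and self-contained: for (i), the coordinate-by-coordinate induction correctly forces every admissible point $p$ in a positive convex combination representing a saturated $v$ to satisfy $p_{i_t}=n-t+1$ (since the values $n,\dots,n-t+2$ are already used up in $p$ and hence $p_{i_t}\le n-t+1$, with equality forced by the average); for (ii), the choice of $b\in S$ with $b\le n-1$ and $b+1\notin S$, together with shifting the maximal decreasing run $\{b-l+1,\dots,b\}$, makes both $v+\mathbf{u}$ and $v-\mathbf{u}$ admissible (the checks $b+1\notin S$, $b-l\notin S$ by maximality of $l$, and $b-l\ge0$ all hold), exhibiting $v$ as a midpoint of two other points of $\cP(m,n)$. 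The count $\sum_{k}\binom{m}{k}k!=\sum_k\frac{m!}{(m-k)!}$ is then immediate.
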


\begin{proposition}[Theorems 5.10 and 5.11 in \cite{HS}]\label{p-facetdesc}
The facet description of $\cP(m,n)$ is
\begin{equation}\label{facetdesc}\cP(m,n)=
\left\{\mathbf{x}\in\mathbb{R}^m\:\left|\,\begin{array}{rcll}0&\leq& x_i,& \textrm{for all }i\in[m],\\[1.5mm]
\sum_{i\in S} x_i&\leq&\binom{n+1}{2}-\binom{n+1-|S|}{2},&\textrm{for all nonempty }S\subseteq[m]\\&&&\textrm{with }|S|\leq n-1\textrm{ or }|S|=m\end{array}\!\!
\right.\right\},\end{equation}
where the inequalities correspond to distinct facets,
and where $\binom{n+1-|S|}{2}$ is taken to be $0$ if $n+1-|S|\le1$ (which occurs if $|S|=m\ge n$).
It follows that
$\cP(m,n)$ has $m+\sum_{k=\max(1,m-n+1)}^m\binom{m}{k}$ facets.
\end{proposition}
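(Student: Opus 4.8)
Write $Q$ for the polytope on the right-hand side of~\eqref{facetdesc}, and for $x\in\mathbb R^m$ and $S\subseteq[m]$ write $x(S)$ for $\sum_{i\in S}x_i$. The plan is to prove the inclusions $\cP(m,n)\subseteq Q$ and $Q\subseteq\cP(m,n)$ separately, and then to identify which of the listed inequalities induce facets and tally them. For the first inclusion I would simply check that each vertex of $\cP(m,n)$, as described in Proposition~\ref{p-vertices}, satisfies every inequality: nonnegativity is immediate, and for a nonempty $S$ with $|S|=s$ the value $x(S)$ over the vertices is largest when the $s$ biggest available entries sit in the positions of $S$; since $n,n-1,\dots,n-s+1$ are all positive when $s\le n$, the resulting bound is $n+(n-1)+\cdots+(n-s+1)=\binom{n+1}2-\binom{n+1-s}2$ when $s\le\min(m,n)$, and is at most this otherwise — in particular, for $S=[m]$ with $m\ge n$ every vertex has coordinate sum at most $1+2+\cdots+n=\binom{n+1}2$. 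Hence $\cP(m,n)\subseteq Q$.

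For the reverse inclusion the cleanest route is to recognize $Q$ as a polymatroid. Let $g\colon 2^{[m]}\to\mathbb Z_{\ge0}$ be the cardinality-based set function $g(S)=\sum_{i=1}^{\min(|S|,n)}(n+1-i)$, so that the successive increments run through $n,n-1,\dots,1,0,0,\dots$ as $|S|$ grows; these are nonnegative and weakly decreasing, so $g$ is monotone and submodular with $g(\emptyset)=0$, and therefore $P_g:=\{x\in\mathbb R^m_{\ge0}\mid x(S)\le g(S)\text{ for all }S\subseteq[m]\}$ is an integral generalized permutohedron. I would then note two facts. First, $Q=P_g$: the constraint of $P_g$ indexed by any $S$ with $|S|\le n-1$ or $|S|=m$ coincides with the corresponding inequality of~\eqref{facetdesc} (using $\binom{n+1-m}2=0$ when $m\ge n$), while for $n\le|S|<m$ one has $g(S)=\binom{n+1}2$ and then $x(S)\le x([m])\le\binom{n+1}2$ follows from $x\ge0$ together with the $S=[m]$ inequality, so those constraints of $P_g$ are redundant. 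Second, by the greedy description of the vertices of a polymatroid independence polytope, each vertex of $P_g$ arises from an ordering $i_1,\dots,i_m$ of $[m]$ and an index $j$ by setting the first $\min(j,n)$ of the coordinates $x_{i_1},x_{i_2},\dots$ equal to $n,n-1,\dots,n-\min(j,n)+1$ and the rest to $0$; as $k:=\min(j,n)$ ranges over $0,1,\dots,\min(m,n)$ these are precisely the vertices listed in Proposition~\ref{p-vertices}. Hence $Q=P_g=\cP(m,n)$. (One can instead argue by induction on $m$ without invoking polymatroids: a point of $Q$ with a zero coordinate reduces to $\cP(m-1,n)$, and a point with all coordinates positive can be pushed, by decreasing its smallest entry, onto the facet $\sum_ix_i=\binom{n+1}2$, which is the permutohedron $\Pi(0,\dots,0,1,\dots,n)$ and hence lies in $\cP(m,n)$.)

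It remains to see that each inequality in~\eqref{facetdesc} induces a facet, that these facets are distinct, and that there are $m+\sum_{k=\max(1,m-n+1)}^m\binom mk$ of them. For each inequality I would exhibit $m$ affinely independent vertices of $\cP(m,n)$ on its face. The face $x_i=0$ contains $\mathbf 0$ together with $n\,\e_j$ for all $j\ne i$, which are affinely independent; this accounts for $m$ facets. For $S$ with $1\le|S|=s\le n-1$, a vertex lies on $x(S)=\binom{n+1}2-\binom{n+1-s}2$ exactly when $n,n-1,\dots,n-s+1$ occupy the positions of $S$ in some order and the remaining nonzero entries $n-s,n-s-1,\dots$ occupy positions of $[m]\setminus S$; so this face is affinely isomorphic to the product of $\Pi(n-s+1,\dots,n)$ on the coordinates of $S$ with a copy of $\cP(m-s,n-s)$ on the coordinates of $[m]\setminus S$, of dimension $(s-1)+(m-s)=m-1$ by Propositions~\ref{p-dim} and~\ref{p-vertices} (using $n-s\ge1$). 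When $S=[m]$ with $m\ge n$, the face $\sum_ix_i=\binom{n+1}2$ is the permutohedron $\Pi(0,\dots,0,1,\dots,n)$, again of dimension $m-1$. Distinctness of all these facets is then immediate from their outer normals $-\e_i$ and $\mathbf 1_S$, and the stated count follows by comparing the three regimes $m<n$, $m=n$, $m>n$ with $m+\sum_{k=\max(1,m-n+1)}^m\binom mk$, using $\sum_{k=m-n+1}^m\binom mk=\sum_{j=0}^{n-1}\binom mj$ when $m\ge n$, and noting that the clause $|S|=m$ contributes a genuinely new facet only when $m\ge n$ (otherwise $[m]$ is already counted among the sets with $|S|\le n-1$).

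I expect the reverse inclusion $Q\subseteq\cP(m,n)$ to be the crux: ruling out spurious vertices of $Q$ is exactly the point where the submodularity (uncrossing) of $g$ — or, equivalently, the inductive reduction above — is needed. The verification that every listed inequality is facet-inducing and the bookkeeping behind the facet count are routine, though they require some care with the boundary cases $|S|=m$ and $n\le|S|<m$.
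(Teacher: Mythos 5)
Your argument is correct, but note that the paper does not prove this proposition at all: it is imported verbatim from Heuer and Striker (Theorems 5.10 and 5.11 of \cite{HS}), so you have supplied a proof where the paper gives only a citation. Your route — identifying the right-hand side of \eqref{facetdesc} with the polymatroid polytope $P_g$ of the cardinality-based monotone submodular function $g(S)=\sum_{i=1}^{\min(|S|,n)}(n+1-i)$, discarding the redundant constraints with $n\le|S|<m$, and reading off the vertices via Edmonds' greedy description — is a clean and more conceptual alternative to the direct vertex-and-hyperplane verification in \cite{HS}. It also dovetails nicely with material the paper develops later: the anti-blocking description of Corollary~\ref{cor-anti-block}, the generalized-permutohedron lift of Section~\ref{sec:gp}, and the polymatroid perspective of \cite{black2022flag}. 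The facet-inducing and counting portion is handled adequately: exhibiting the face over $S$ with $1\le|S|\le n-1$ as $\Pi(n,\dots,n-|S|+1)\times\cP(m-|S|,n-|S|)$ and the face over $S=[m]$, $m\ge n$, as $\Pi(0,\dots,0,1,\dots,n)$ gives dimension $m-1$ in each case, and distinctness via the normals $-\e_i$ and $\mathbf{1}_S$ together with the index shift $\sum_{k=m-n+1}^m\binom{m}{k}=\sum_{j=0}^{n-1}\binom{m}{j}$ yields the stated count. One small slip, confined to your parenthetical inductive alternative: decreasing the smallest positive entry of a point of $Q$ moves it toward a coordinate hyperplane, not onto the facet $\sum_i x_i=\binom{n+1}{2}$; to run that induction you would move the point in both directions along a coordinate line, writing it as a convex combination of a point with a zero coordinate and a point on a tight upper-bound face. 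Since your main argument is the polymatroid one, this does not affect the proof.
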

Note that the facets which do not contain the origin are precisely those given by equalities in the second set of inequalities of~\eqref{facetdesc}.
Note also that for $|S|\leq n-1$ in the second of inequalities, $\binom{n+1}{2}-\binom{n+1-|S|}{2}$ can alternatively be written as $|S|\,n-\binom{|S|}{2}$.

\begin{remark}In \cite[Theorem~5.27 with $z=(n,\ldots,1)$]{HS}, it is shown that $\cP(m,n)$ is a projection of the $(m,n)$-partial permutation polytope,
which is the convex hull of the set $P_{m,n}$ of $m\times n$ partial permutation matrices,
and is also known as the polytope of $m\times n$ doubly substochastic matrices (see, for example,~\cite[Sec.~9.8]{Bru06})
and the matching polytope of the complete bipartite graph~$K_{m,n}$ (see, for example,~\cite[Chapters~18 and 25]{Schrijver}
or~\cite[Corollary~5.5]{KohOlsSan20} for $m=n$).  
In~\cite[Theorem~5.28 with $z=(n,\ldots,1)$]{HS}, it is shown that~$\cP(m,n)$ is also a projection of the $(m,n)$-partial alternating sign matrix polytope.  Indeed, the main aim of~\cite{HS} was to define and study the $(m,n)$-partial alternating sign matrix polytope.
\end{remark}

\section{Faces of the partial permutohedron}\label{sec-faces}
In this section, we explore the faces of the partial permutohedron $\cP(m,n)$.
In so doing, it will be useful to observe that, by Proposition~\ref{p-facetdesc}, the facets of $\cP(m,n)$ are:
\begin{enumerate}
\item $\{\mathbf{x}\in\cP(m,n) \mid x_i=0\}$, for all $i\in[m]$.
\smallskip
\item $\left\{\mathbf{x}\in\cP(m,n) \,\middle |\, \sum_{i\in S} x_i=\binom{n+1}{2}-\binom{n+1-|S|}{2}\right\}$, for all nonempty $S\subsetneq[m]$ with $|S|\le n-1$.
\smallskip
\item $\left\{\mathbf{x}\in\cP(m,n)\,\middle |\, \sum_{i=1}^m x_i=\begin{cases}\binom{n+1}{2}-\binom{n+1-m}{2},\textrm{ if }n\geq m\\
\binom{n+1}{2},\textrm{ if }n\le m\end{cases}\right\}$.
\end{enumerate}
Throughout this section, we use the convention that $\binom{k}{2}$ is taken to be~$0$ 
for any integer $k\le1$.  Thus, for example, the facet in~(3) above can be written as $\{\mathbf{x}\in\cP(m,n)\mid\sum_{i=1}^mx_i=\binom{n+1}{2}-
\binom{n+1-m}{2}\}$.

\subsection{Characterization of the face lattice of $\cP(m,n)$}
Heuer and Striker~\cite[Theorem 5.24]{HS} proved that, for any $m$ and $k$, the faces of $\cP(m,m)$ with dimension $k$ are in bijection with certain chains in the Boolean lattice $\mathcal{B}_m$ with $k$ so-called missing
ranks.  Heuer and Striker~\cite[Conjecture 5.25]{HS} also conjectured that this result can be generalized to $\cP(m,n)$, for any $m$, $n$ and $k$. 
We prove this conjecture in Theorem \ref{thm-faces-chains},
but first we define all of the objects needed to state the result precisely.

\begin{definition}\label{def-Boolean}
The \emph{Boolean lattice} $\mathcal{B}_m$ is the poset consisting of subsets $A\subseteq[m]$, ordered by
inclusion, where $A\in\mathcal{B}_m$ has rank $|A|$, the cardinality of $A$. A \emph{chain} $C$ in $\mathcal{B}_m$
is a nonempty ordered collection $C= (A_1\subsetneq A_2\subsetneq\cdots\subsetneq A_\ell)$ of subsets $A_i\in \mathcal{B}_m$. We say that a rank $i$ is
\emph{missing} from a chain $C$ in~$\mathcal{B}_m$ if there is no subset of rank $i$ in $C$ and there is a subset of rank greater than $i$ in $C$.
\end{definition}

\begin{remark}\label{rem-missingranks}
It follows from the definition that the number of missing ranks in a chain $(A_1\subsetneq \cdots \subsetneq A_\ell)$ 
in~$\mathcal{B}_m$ is $|A_\ell|-\ell+1$.
\end{remark}

\begin{definition}\label{definition-Cmn}
Let $\mathcal{C}(m,n)$ denote the set of all chains $(A_1\subsetneq \cdots\subsetneq A_\ell)$ in $\mathcal{B}_m$ which satisfy the following:
\begin{itemize}
\item If $A_1\neq\varnothing$, then $|A_\ell\setminus A_1|\le n-1$.
\item If $A_1=\varnothing$ and $\ell\geq2$, then $|A_\ell\setminus A_2|\le n-1$.
\end{itemize}
In other words, $\mathcal{C}(m,n)$ consists of the chain $(\varnothing)$ together with all other chains in $\mathcal{B}_m$
for which the difference in size between the largest subset and the smallest nonempty subset is at most $n-1$.
\end{definition}

\begin{remark}\label{rem-isomorphic}
If $n\ge m$, then $\cC(m,n)$ is simply the set of all chains in $\mathcal{B}_m$.
Hence, for fixed $m$, all sets $\cC(m,n)$ with $n\ge m$ are identical.
\end{remark}

We begin with the following technical result which is used in the proof of the subsequent Theorem~\ref{thm-faces-chains}.
\begin{proposition}\label{p-simple}
The partial permutohedron $\cP(m,n)$ is a simple polytope.
\end{proposition}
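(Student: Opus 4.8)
To show that $\cP(m,n)$ is a simple polytope, I need to verify that each vertex lies on exactly $m=\dim\cP(m,n)$ facets. The vertices are fully described by Proposition~\ref{p-vertices}: a vertex $\vv$ has zero entries in some set $Z\subseteq[m]$ of positions with $|Z|=m-k$, and the entries $n,n-1,\dots,n-k+1$ in the remaining $k$ positions, for some $0\le k\le\min(m,n)$. The facets are the three families listed at the start of Section~\ref{sec-faces}: the coordinate facets $\{x_i=0\}$, the facets $\{\sum_{i\in S}x_i=\binom{n+1}{2}-\binom{n+1-|S|}{2}\}$ for nonempty $S\subsetneq[m]$ with $|S|\le n-1$, and the top facet $\{\sum_{i=1}^m x_i=\binom{n+1}{2}-\binom{n+1-m}{2}\}$. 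So the proof is a bookkeeping exercise: fix a vertex $\vv$ of type $(Z,k)$, and count exactly which facets contain it.

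\textbf{Key steps.} First, a vertex with parameter $k$ lies on the coordinate facet $\{x_i=0\}$ iff $i\in Z$, contributing exactly $m-k$ facets. Second, for the middle family: given a nonempty $S\subsetneq[m]$ with $|S|\le n-1$, the quantity $\sum_{i\in S}x_i$ is maximized, over all points of $\cP(m,n)$, by the value $\binom{n+1}{2}-\binom{n+1-|S|}{2}=|S|n-\binom{|S|}{2}$, which is the sum of the $|S|$ largest available distinct values $n,n-1,\dots,n-|S|+1$. The vertex $\vv$ achieves this iff the coordinates of $\vv$ restricted to $S$ are exactly $\{n,n-1,\dots,n-|S|+1\}$ — equivalently, $S$ consists of positions carrying the top $|S|$ nonzero entries. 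Concretely, writing the $k$ nonzero positions of $\vv$ in decreasing order of their values, the sets $S$ of this family on which $\vv$ lies are precisely the initial segments $S$ of that ordered list with $1\le|S|\le\min(k,n-1)$. That gives $\min(k,n-1)$ such facets (noting $S\subsetneq[m]$ is automatic when $|S|\le n-1<m$ in the relevant regime; I will need to handle the edge case $|S|=m$ via the third family). Third, the top facet $\{\sum_i x_i=\binom{n+1}{2}-\binom{n+1-m}{2}\}$: when $n\ge m$ every vertex with $k=m$ lies on it, and in general the maximum of $\sum_i x_i$ over $\cP(m,n)$ is the sum of the $\min(m,n)$ largest distinct values, so $\vv$ lies on the top facet iff $k=\min(m,n)$.

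\textbf{Putting it together.} Now I combine the counts and split into cases. If $n\ge m$: the middle family requires $|S|\le n-1$, and since $\vv$ has at most $k\le m\le n$ nonzero entries, the relevant initial segments have size up to $\min(k,n-1)$. When $k<m$ (so $k\le m-1\le n-1$) the vertex lies on $m-k$ coordinate facets, $k$ middle facets, and the top facet only if $k=m$, which fails; total $m-k+k=m$. When $k=m\le n$, it lies on $0$ coordinate facets, on $m-1$ middle facets (initial segments of sizes $1,\dots,m-1$, since size $m$ is excluded from the middle family), and on the top facet; total $0+(m-1)+1=m$. If $n<m$: a vertex has $k\le n$ nonzero entries, hence at least $m-n\ge1$ zeros, so it lies on $m-k$ coordinate facets with $m-k\ge m-n\ge 1$; it lies on initial-segment middle facets of sizes $1,\dots,\min(k,n-1)$; and on the top facet iff $k=n$. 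If $k<n$: total $=(m-k)+k+0=m$. If $k=n$: total $=(m-n)+(n-1)+1=m$. In every case the vertex lies on exactly $m$ facets, and since $\cP(m,n)$ has dimension $m$, it is simple.

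\textbf{Main obstacle.} The only real subtlety is making the claim "$\sum_{i\in S}x_i\le|S|n-\binom{|S|}{2}$, with equality on the vertex $\vv$ exactly when $S$ indexes the top $|S|$ values of $\vv$" fully rigorous, and tracking the interaction between the side condition $|S|\le n-1$ in the middle family and the separate top facet when $|S|=m$; i.e., correctly avoiding double-counting and making sure the initial segment of size $\min(k,n-1)$ versus size $k$ is attributed to the right family. Everything else is a direct case check against Propositions~\ref{p-vertices} and~\ref{p-facetdesc}. (An alternative, shorter route: use that $\cP(m,n)$ is an anti-blocking polytope of a permutohedron, as mentioned in the introduction, together with known simplicity of such polytopes — but the direct vertex/facet count above is self-contained and is what I would write.)
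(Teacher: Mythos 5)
Your proof is correct and follows essentially the same route as the paper: both verify simplicity by showing, via the vertex description of Proposition~\ref{p-vertices} and the facet description of Proposition~\ref{p-facetdesc}, that every vertex lies on exactly $m$ facets ($m-k$ coordinate facets, the initial-segment facets, and the top facet when applicable). Your case split on $n\ge m$ versus $n<m$ and $k<\min(m,n)$ versus $k=\min(m,n)$ is just a more explicit organization of the same count the paper performs.
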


\begin{proof}
Since, by Proposition~\ref{p-dim}, $\cP(m,n)$ is $m$-dimensional, this result follows from the fact that each vertex of $\cP(m,n)$ is contained in exactly $m$ facets.
Specifically, by Proposition~\ref{p-vertices}, for any vertex $\mathbf{v}$ of $\cP(m,n)$, there exist unique $i_1,\ldots,i_k\in[m]$
such that $v_j=0$ for $j\in[m]\setminus\{i_1,\ldots,i_k\}$, and $v_{i_j}=n-j+1$ for $j=1,\ldots,k$.
It can then be seen that $\mathbf{v}$ is contained in the $m-k$ facets $\{\mathbf{x}\in\cP(m,n)\mid x_j=0\}$ for $j\in[m]\setminus\{i_1,\ldots,i_k\}$,
the $k-1$ facets $\{\mathbf{x}\in\cP(m,n)\mid x_{i_1}+\ldots+x_{i_j}=\binom{n+1}{2}-\binom{n-j+1}{2}\}$ for $j=1,\ldots,k-1$,
and the single facet $\{\mathbf{x}\in\cP(m,n)\mid x_{i_1}+\ldots+x_{i_k}=\binom{n+1}{2}-\binom{n-k+1}{2}\}$ if
$k\ne n$, or $\{\mathbf{x}\in\cP(m,n)\mid\sum_{i=1}^mx_i=\binom{n+1}{2}\}$ if $k=n$ (which implies $n\le m$).
Furthermore, $\mathbf{v}$ is not contained in any other facets.
\end{proof}

We are now ready to prove Conjecture 5.25 of~\cite{HS} for the faces of~$\cP(m,n)$ with any~$m$ and~$n$.
Recently, an alternative proof of this conjecture was independently obtained by Black and Sanyal~\cite[Theorem~7.5]{black2022flag} in the context of monotone path polytopes of polymatroids.  Related results, including expressions for the $f$-vector, are obtained 
in the context of parking function polytopes (see Remarks~\ref{rem-parkingfunctionpolytope} and~\ref{rem-parkingfunctionpolytopegen}) in~\cite[Section~3]{AW} for the case $n=m-1$, and in~\cite[Propositions~3.14 and~3.15]{HanadaLentferVindasMelendez} for $n\ge m-1$.

\begin{theorem}\label{thm-faces-chains}
Given a chain $C=(A_1\subsetneq \cdots\subsetneq A_\ell)$ in $\mathcal{C}(m,n)$, let $\cF_C$ be the intersection of $\cP(m,n)$ with the following hyperplanes:
\renewcommand{\labelenumi}{(\roman{enumi})}
\begin{enumerate}
\item $\{\mathbf{x}\in\bbR^m \mid x_i=0\}$, for all $i\in[m]\setminus A_\ell$.
\item $\left\{\mathbf{x}\in\bbR^m\,\middle |\, \sum_{i\in A_\ell\setminus A_j} x_i=\binom{n+1}{2}-\binom{n+1-|A_\ell\setminus A_{j}|}{2}\right\}$, for all $2\le j\le \ell-1$,
and also for $j=1$ unless $A_1=\varnothing$ and $|A_\ell|\ge n$.
\item $\left\{\mathbf{x}\in\bbR^m\,\middle |\,\sum_{i=1}^mx_i=\binom{n+1}{2}\right\}$, if $A_1=\varnothing$ and $|A_\ell|\ge n$.
\end{enumerate}
Then the following is a bijection:
\begin{align*}
\mathcal{C}(m,n)&\longrightarrow \{\text{nonempty faces of } \cP(m,n)\},\\
C&\longmapsto \cF_C.
\end{align*}
Moreover, this bijection maps chains with $k$ missing ranks to faces of dimension $k$,
for each $k=0,\ldots,m$.
\end{theorem}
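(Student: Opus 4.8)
The plan is to reduce everything to the fact, proved in Proposition~\ref{p-simple}, that $\cP(m,n)$ is a simple polytope. I will use the standard facts about an $m$-dimensional simple polytope $\cP$: every nonempty face is the intersection of the facets containing it; the codimension of a nonempty face equals the number of facets containing it; and if $N$ pairwise distinct facets of $\cP$ have a common point, their intersection is a face of $\cP$ of codimension exactly $N$. Together with the list of facets of $\cP(m,n)$ of types~(1)--(3) recalled at the start of this section, these facts convert the theorem into a bookkeeping problem about which families of facets meet.

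First I would show that $C\mapsto\cF_C$ lands in the set of nonempty faces, with the stated dimension. For $C=(A_1\subsetneq\cdots\subsetneq A_\ell)\in\cC(m,n)$, the two conditions defining $\cC(m,n)$ force each set $A_\ell\setminus A_j$ occurring in~(ii) to be nonempty with at most $n-1$ elements or to equal $[m]$, so, by Proposition~\ref{p-facetdesc}, every hyperplane appearing in (i)--(iii) is facet-defining, with the single exception of the $j=1$ term when $\ell=1$, which is the trivial hyperplane $\bbR^m$. A direct count shows the facet-defining ones among these are $m-|A_\ell|$ distinct facets of type~(1) and $\ell-1$ distinct facets of types~(2)--(3), hence $N:=m-|A_\ell|+\ell-1$ distinct facets in total. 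To see $\cF_C\ne\varnothing$ I would exhibit a vertex of $\cP(m,n)$ lying on all of them: when $A_1\ne\varnothing$, take the vertex supported on $A_\ell\setminus A_1$ whose entries there are $n,n-1,\dots,n-|A_\ell\setminus A_1|+1$, assigned so that each $A_\ell\setminus A_j$ receives the $|A_\ell\setminus A_j|$ largest of them; the cases $A_1=\varnothing$ are analogous, additionally using the values $1,\dots,n$ when~(iii) is invoked (which is legitimate because then $m\ge n$). By the simple-polytope facts, $\cF_C$ is a face of codimension exactly $N$, so $\dim\cF_C=|A_\ell|-\ell+1$, which by Remark~\ref{rem-missingranks} is the number of missing ranks of $C$; this proves the final assertion of the theorem. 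Injectivity then follows immediately: the $N$ facets above are precisely the facets containing $\cF_C$, so $\cF_C$ determines $A_\ell$ (the complement of the index set of the type-(1) facets containing $\cF_C$) and the sets $A_\ell\setminus A_1,\dots,A_\ell\setminus A_{\ell-1}$ (read off from the type-(2)/(3) facets containing $\cF_C$), hence determines $C$.

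For surjectivity, let $\cF$ be a nonempty face, and let $A_\ell\subseteq[m]$ be such that the type-(1) facets containing $\cF$ are exactly those indexed by the elements of $[m]\setminus A_\ell$. The key point is that the type-(2) facets containing $\cF$ form a chain: the facet indexed by $S$ (with $1\le|S|\le n-1$) contains $\cF$ if and only if, in every vertex of $\cF$, the $|S|$ largest entries occupy exactly the positions in $S$; and if $S,S'$ both index such facets with $|S|\le|S'|$, then for any vertex of $\cF$ the set $S$ is the set of positions of its $|S|$ largest entries, which is contained in the set of positions of its $|S'|$ largest entries, namely $S'$. Writing this chain as $S_1\subsetneq\cdots\subsetneq S_p$ (necessarily all contained in $A_\ell$), I would take $C:=(A_\ell\setminus S_p\subsetneq\cdots\subsetneq A_\ell\setminus S_1\subsetneq A_\ell)$, except that when $|A_\ell|\ge n$ and the type-(3) facet contains $\cF$ I prepend $\varnothing$. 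One checks $C\in\cC(m,n)$ using $|S_t|\le n-1$, and that the facets cutting out $\cF_C$ are exactly the facets containing $\cF$ (matching the type-(1), type-(2) and type-(3) facets separately); since two faces of a simple polytope lying in the same facets coincide, $\cF_C=\cF$.

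The step I expect to be the main obstacle is surjectivity, and inside it the correct handling of the type-(3) facet. One must verify that the chain of type-(2) facets through $\cF$, the set $A_\ell$, and the possible presence of the total-sum facet really do assemble into a chain of $\cC(m,n)$ whose associated face is $\cF$, paying close attention to the borderline cases $A_\ell=[m]$ and $|A_\ell|\ge n$, in which the total-sum facet either coincides with a type-(2) facet or else forces $\varnothing$ onto the bottom of the chain. Proving that the type-(2) facets through a given face form a chain, and locating $A_\ell$ relative to that chain, is the heart of the argument.
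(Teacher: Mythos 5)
Your proposal is correct and follows essentially the same route as the paper's proof: simplicity of $\cP(m,n)$ (Proposition~\ref{p-simple}) gives injectivity and the dimension count, an explicit vertex construction gives nonemptiness, and surjectivity rests on showing that the index sets of the sum-facets containing a given face are nested, exactly as in the paper's claim that $|S'|\le|S|$ forces $S'\subseteq S$. The one point to tighten is the borderline case where the total-sum facet contains $\cF$ but $n>m$ (so $A_\ell=[m]$ and $|A_\ell|=m<n$, and your stated condition ``$|A_\ell|\ge n$'' fails): there $\varnothing$ must still appear at the bottom of the chain, which works out because the total-sum facet is then just the $S=[m]$ instance of a type-(2)-style constraint, a subtlety you yourself flag in your closing paragraph.
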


\begin{proof}
We start by showing that we have a well-defined map from $\mathcal{C}(m,n)$ to the set of 
nonempty faces of~$\cP(m,n)$, i.e., that $\cF_C$ is a nonempty face of $\cP(m,n)$, for all $C\in \mathcal{C}(m,n)$.
Observe that for any one of the $m-|A_\ell|+\ell-1$ hyperplanes $\mathcal{H}$ in the definition of $\cF_C$, the intersection
of $\cP(m,n)$ with~$\cH$ is a facet of $\cP(m,n)$.  Specifically, using the numbering of facet types
given at the start of Section~\ref{sec-faces}
and the numbering of hyperplane types given in the definition of $\cF_C$, if~$\cH$ is a hyperplane of type~(i)
then $\cP(m,n)\cap\cH$ is a facet of type~(1), if~$\cH$ is a hyperplane of type~(ii) 
then $\cP(m,n)\cap\cH$ is a facet of type~(2) or (if $j=1$, $A_1=\varnothing$, $A_\ell=[m]$ and $n>m$) type~(3), and if~$\cH$ is a hyperplane of type~(iii)
then $\cP(m,n)\cap\cH$  is a facet of type~(3).
Since any intersection of facets of a polytope is a face of the polytope, it follows that $\cF_C$ is a face of $\cP(m,n)$.
Furthermore,~$\cF_C$ is nonempty since it contains certain vertices of $\cP(m,n)$
(which are thus the vertices of~$\cF_C$), as follows.
Essentially, each such vertex can be obtained as a vector in $\RR^m$ by placing $0$'s into 
positions $[m]\setminus A_\ell$, placing the largest possible entries 
(specifically, $n,n-1,\ldots,n-|A_\ell\setminus A_{\ell-1}|+1$) in any order into positions $A_\ell\setminus A_{\ell-1}$, placing the next largest possible entries (specifically, $n-|A_\ell\setminus A_{\ell-1}|,\ldots,n-|A_\ell\setminus A_{\ell-2}|+1$) in any order into $A_{\ell-1}\setminus A_{\ell-2}$, etc., and placing
the smallest possible entries (which may include 0's) in any order into either~$A_1$ (if $A_1\ne\varnothing$) or $A_2$ (if $A_1=\varnothing$ and $|A_\ell|\ge n$). For full details
of this construction and its validity, see Proposition~\ref{prop-facevertices} below.

Proceeding to the injectivity of the map, this is a straightforward consequence of Proposition~\ref{p-simple}, i.e., that $\cP(m,n)$ is simple.
Concretely, in a simple polytope, each nonempty intersection of facets determines a unique face.

We now show that the map is surjective.
Consider any nonempty face $F$ of $\cP(m,n)$. Then, using~\eqref{facetdesc},
there exist $T\subseteq [m]$ and 
\begin{equation}\label{eq-S-surj}\cS\subseteq\bigl\{\varnothing\subsetneq S\subseteq[m]\,\bigm|\,|S|\le n-1\text{ or }|S|=m\bigr\},\end{equation}
such that
\begin{equation}\label{eq-F-surj}F=\left\{\mathbf{x}\in\cP(m,n)\left|\:\begin{array}{@{}ll@{}}x_i=0,&\text{for all }i\in T,\\
\sum_{i\in S}x_i=\binom{n+1}{2}-\binom{n+1-|S|}{2},&\text{for all }S\in\mathcal{S}\end{array}\right.\right\}.
\end{equation}
We claim that if $S',S\in \cS$ are such that $|S'|\le|S|$, then $S'\subseteq S$.
The claim can immediately be seen to hold if $S=[m]$. 
So, consider now the remaining cases of 
$S',S\in\cS$ with $|S'|\le|S|\le n-1$, and let $\mathbf{v}$ be a vertex of $F$.
Then since $\mathbf{v}$ is also a vertex of $\cP(m,n)$, we have
$$\{v_i\mid i\in S'\}=\{n,n-1,\ldots,n+1-|S'|\}
\subseteq\{n,n-1,\ldots,n+1-|S|\}=\{v_i\mid i\in S\},$$
where the containment follows from $|S'|\le|S|$,
and the equalities follow from the form of vertices given by Proposition~\ref{p-vertices},
together with $\mathbf{v}\in F$, $\sum_{i\in S'}v_i=\binom{n+1}{2}-\binom{n+1-|S'|}{2}$,
$\sum_{i\in S}v_i=\binom{n+1}{2}-\binom{n+1-|S|}{2}$ and $|S'|,|S|\le n-1$.
The conclusion $S'\subseteq S$ follows using the fact
(as given by Proposition~\ref{p-vertices})
that all nonzero entries of $\mathbf{v}$ are distinct.
Using this claim, we can order the elements of $\cS$ as
$$S_1\subsetneq\cdots\subsetneq S_{\ell-1},$$
where $\ell=|\cS|+1$. 
We obtain a chain $C=(A_1 \subsetneq\cdots\subsetneq A_\ell)\in\cC(m,n)$ by setting $A_\ell=[m]\setminus T$ and then $A_{\ell-1}=A_\ell\setminus S_1$, \ldots, $A_1=A_\ell\setminus S_{\ell-1}$. 
(Note that $A_{\ell-1}\subseteq A_\ell$ follows immediately from $A_\ell=[m]\setminus T$ and $A_{\ell-1}=A_\ell\setminus S_1=[m]\setminus(S_1\cup T)$,
and $A_{\ell-1}\ne A_\ell$ follows by noting that we cannot have $S_1\cup T=T$, or equivalently cannot have $S_1\subset T$, 
since~\eqref{eq-F-surj} would then give the contradiction that $\mathbf{x}\in F$ satisfies $x_i=0$ for all $i\in S_1$ and 
$\sum_{i\in S_1}x_i=\binom{n+1}{2}-\binom{n+1-|S_1|}{2}\ne0$.)
It can be seen that we now have
$F=\cF_C$, thereby confirming the surjectivity of the map.

Lastly, we proceed to the dimension of $\cF_C$.
For a simple polytope of dimension~$d$, any nonempty intersection of $i$ distinct facets is a face of dimension $d-i$.  Therefore, using the facts that $\cP(m,n)$ is simple by Proposition~\ref{p-simple}, that $\cP(m,n)$ has dimension $m$ by Proposition~\ref{p-dim}, 
and that~$\cF_C$ is the nonempty intersection of $m-|A_\ell|+\ell-1$ facets
(which can easily be seen to be distinct), it follows that
\begin{equation*}\dim(\cF_C)=|A_\ell|-\ell+1,\end{equation*}
which, by Remark~\ref{rem-missingranks}, is the number of missing ranks of $C$, as required.
\end{proof}

Some remarks on Theorem~\ref{thm-faces-chains} are as follows.

\begin{remark}\label{rem:faces1}
For any chain $C=(A_1\subsetneq \cdots\subsetneq A_\ell)$ in $\mathcal{C}(m,n)$, the face
$\cF_C$ of $\cP(m,n)$ defined in Theorem~\ref{thm-faces-chains} can be expressed more compactly as
\begin{equation}\label{eq-Fc-compact}\textstyle
\cF_C=\left\{\mathbf{x}\in\cP(m,n)\,\middle|\,\sum_{i\in[m]\setminus A_j}x_i=\binom{n+1}{2}-
\binom{n+1-|A_\ell\setminus A_j|}{2},\text{ for all }1\le j\le\ell\right\}\end{equation}
(where $\binom{n+1-|A_\ell\setminus A_j|}{2}=0$ 
occurs if $j=1$, $A_1=\varnothing$ and $|A_\ell|\ge n$).
To check the validity of~\eqref{eq-Fc-compact}, observe that 
the case $j=\ell$ gives 
$\sum_{i\in[m]\setminus A_\ell}x_i=0$, which (since each entry of any $\mathbf{x}\in\cP(m,n)$ is nonnegative)
is equivalent to $x_i=0$ for all $i\in [m]\setminus A_\ell$, so that this case corresponds to the
intersection of $\cP(m,n)$ with all hyperplanes of type~(i) in Theorem~\ref{thm-faces-chains}.
The cases $1\le j\le\ell-1$ in~\eqref{eq-Fc-compact} give 
\begin{equation*}\textstyle\sum_{i\in[m]\setminus A_j}x_i=\sum_{i\in[m]\setminus A_\ell}x_i+\sum_{i\in A_\ell\setminus A_j}x_i=\binom{n+1}{2}-\binom{n+1-|A_\ell\setminus A_j|}{2},\end{equation*}
which using $\sum_{i\in [m]\setminus A_\ell}x_i=0$ from the case $j=\ell$ becomes
\begin{equation*}\textstyle
\sum_{i\in A_\ell\setminus A_j}x_i=\binom{n+1}{2}-\binom{n+1-|A_\ell\setminus A_j|}{2},\end{equation*}
so that these cases correspond to the intersection of $\cP(m,n)$ with all hyperplanes of types~(ii) and~(iii).
\end{remark}

\begin{remark}\label{rem:faces2}
It can be seen that, under the bijection of Theorem~\ref{thm-faces-chains},
the chains $(\varnothing)$, $([m])$ and $(\varnothing\subsetneq[m])$ 
(which are contained in $\mathcal{C}(m,n)$ for any~$m$ and~$n$) 
are mapped to the faces 
\begin{align*}
\cF_{(\varnothing)}&=\{\mathbf{0}\},\\
\cF_{([m])}&=\cP(m,n),\\
\cF_{(\varnothing\subsetneq[m])}&=\textstyle\bigl\{\mathbf{x}\in\cP(m,n)\bigm|\sum_{i=1}^mx_i=\binom{n+1}{2}-\binom{n-m+1}{2}\bigr\}.\end{align*}
\end{remark}

\begin{remark}\label{rem-face-lattice}
By extending $\cC(m,n)$ to a set $\cC'(m,n)=\{(\,)\}\cup\cC(m,n)$, where $(\,)$ is regarded as an empty chain,
and by defining $\cF_{(\,)}=\varnothing$, where $\varnothing$ is the empty face of $\cP(m,n)$,
we obtain an extension of the mapping of Theorem~\ref{thm-faces-chains} which is
a bijection from $\cC'(m,n)$ to the set of all faces of $\cP(m,n)$.
Through this bijection, the face lattice of $\cP(m,n)$ (i.e., the lattice formed by the set of faces of $\cP(m,n)$, ordered by containment)
now induces a partial order on $\cC'(m,n)$ (i.e., for $C_1,C_2\in\cC'(m,n)$, the partial order is defined by
$C_1\le C_2$ if and only if $\cF_{C_1}\subseteq\cF_{C_2}$).
This partial order can be described directly in terms of $\cC'(m,n)$ as follows.
For $C=(A_1\subsetneq\cdots\subsetneq A_\ell)\in\cC'(m,n)$, let
\begin{equation}\label{eq-RC}\cR_C=\begin{cases}[m]\,\cup\,\{\varnothing\subsetneq S\subseteq[m]\mid |S|\le n-1\text{ or }|S|=m\},&\text{if }C=(\,),\\
([m]\setminus A_\ell)\,\cup\,\{A_\ell\setminus A_{\ell-1},\ldots,A_\ell\setminus A_1\},&\text{if }A_1\ne\varnothing\text{ or }|A_\ell|\le n-1,\\
([m]\setminus A_\ell)\,\cup\,\{A_\ell\setminus A_{\ell-1},\ldots,A_\ell\setminus A_2,[m]\},&\text{if }A_1=\varnothing\text{ and }|A_\ell|\ge n.
\end{cases}\end{equation}
Then, for $C_1,C_2\in\cC'(m,n)$, we have
\begin{equation}\label{eq-RC-2}C_1\le C_2\quad\text{if and only if}\quad\cR_{C_2}\subseteq\cR_{C_1}.\end{equation}
The validity of this characterization 
can easily be checked by observing that, for $C\in\cC'(m,n)$, $\cR_C$ has the form $T\cup S$ for some $T\in[m]$ and $S\in\cS$,
where $\cS$ is given by~\eqref{eq-S-surj}, and that the face $F=\mathcal{F}_C$ which corresponds to~$C$ is given by~\eqref{eq-F-surj}.
\end{remark}

An example which illustrates the partial order on $\cC'(m,n)$, as characterized in Remark~\ref{rem-face-lattice}, is as follows.
\begin{example}
Let $m=6$ and $n=4$, and consider the chains
\begin{equation*}C_1=(\varnothing\subsetneq\{1,2\}\subsetneq\{1,2,3\}\subsetneq\{1,2,3,4\})
\quad\text{and}\quad C_2=(\{1,2,5\}\subsetneq\{1,2,3,4,5\})\end{equation*}
in $\cC(6,4)$.  Then, using~\eqref{eq-RC}, we have
\begin{equation*}\cR_{C_2}=\{6,\,\{3,4\}\}\subseteq\{5,\,6,\,\{4\},\,\{3,4\},\,\{1,2,3,4,5,6\}\}=
\cR_{C_1},\end{equation*}
and so, using~\eqref{eq-RC-2}, we have $C_1\le C_2$.
\end{example}

Using Remarks~\ref{rem-isomorphic} and~\ref{rem-face-lattice}, we obtain the following corollary to Theorem~\ref{thm-faces-chains}.
Recall that two polytopes are defined to be combinatorially equivalent if their face lattices are isomorphic.

\begin{corollary}\label{cor:comb-iso}
For fixed $m$, all partial permutohedra $\cP(m,n)$ with $n\ge m$ are combinatorially equivalent.
\end{corollary}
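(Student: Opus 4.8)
The plan is to reduce the statement to a comparison of abstract posets. Recall that two polytopes are combinatorially equivalent precisely when their face lattices are isomorphic as posets, so it suffices to show that the face lattice of $\cP(m,n)$ does not depend on $n$ once $n\ge m$.

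First I would appeal to Remark~\ref{rem-face-lattice}: the extended map $C\mapsto\cF_C$ is an isomorphism from the poset $(\cC'(m,n),\le)$ onto the face lattice of $\cP(m,n)$, where $\cC'(m,n)=\{(\,)\}\cup\cC(m,n)$ and the order is given by $C_1\le C_2\iff\cR_{C_2}\subseteq\cR_{C_1}$, with $\cR_C$ as in~\eqref{eq-RC}. Thus it is enough to prove that, for $n\ge m$, neither the underlying set $\cC'(m,n)$ nor the assignment $C\mapsto\cR_C$ depends on $n$. The first of these is immediate from Remark~\ref{rem-isomorphic}, which says that $\cC(m,n)$ is the set of all chains in $\mathcal{B}_m$ whenever $n\ge m$; hence $\cC'(m,n)$ is literally the same set for all such $n$.

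It then remains to verify that $C\mapsto\cR_C$ is $n$-independent for $n\ge m$, which is a short inspection of~\eqref{eq-RC}. For $C=(\,)$, the set $\{\varnothing\subsetneq S\subseteq[m]\mid |S|\le n-1\text{ or }|S|=m\}$ is the collection of all nonempty subsets of $[m]$ as soon as $n\ge m$, so $\cR_{(\,)}$ is fixed. For a nonempty chain $C=(A_1\subsetneq\cdots\subsetneq A_\ell)$ one has $|A_\ell|\le m\le n$, so the third clause of~\eqref{eq-RC} can apply only when $n=m$, $A_1=\varnothing$ and $A_\ell=[m]$; but in that situation $[m]\setminus A_\ell=\varnothing$ and $A_\ell\setminus A_1=[m]$, so the third clause returns exactly the value $([m]\setminus A_\ell)\cup\{A_\ell\setminus A_{\ell-1},\ldots,A_\ell\setminus A_1\}$ produced by the second clause. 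Hence for every $n\ge m$ we have $\cR_C=([m]\setminus A_\ell)\cup\{A_\ell\setminus A_{\ell-1},\ldots,A_\ell\setminus A_1\}$, independent of $n$. Combining these facts, $(\cC'(m,n),\le)$, and therefore the face lattice of $\cP(m,n)$, is the same for all $n\ge m$, which gives the claimed combinatorial equivalence.

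I do not expect a genuine obstacle here: the argument is essentially bookkeeping on top of Theorem~\ref{thm-faces-chains} and Remark~\ref{rem-face-lattice}. The only point that requires care is the collapse of the three-way split in~\eqref{eq-RC} in the boundary case $n=m$, $A_1=\varnothing$, $A_\ell=[m]$, where one must check separately that the two relevant branches of the definition of $\cR_C$ produce the same set.
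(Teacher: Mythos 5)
Your proposal is correct and follows essentially the same route as the paper's proof: both reduce to showing that $\cC'(m,n)$ and the assignment $C\mapsto\cR_C$ from~\eqref{eq-RC} are independent of $n$ for $n\ge m$, including the same check that the third clause of~\eqref{eq-RC} only fires in the boundary case $n=m$, $A_1=\varnothing$, $A_\ell=[m]$, where it agrees with the second clause. No gaps.
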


\begin{proof}
Let $\cC(m)$ denote the set of all chains (including the empty chain) in~$\cB_m$, and 
consider $\cP(m,n_1)$ and $\cP(m,n_2)$ with $n_1,n_2\ge m$.  Then, by Remark~\ref{rem-isomorphic}, Theorem~\ref{thm-faces-chains} and Remark~\ref{rem-face-lattice},
the set of faces of $\cP(m,n_1)$ and set of faces of $\cP(m,n_2)$ are in bijection, since each of these sets is in bijection with~$\cC(m)=\cC'(m,n_1)=\cC'(m,n_2)$.
Furthermore, the partial order on~$\cC(m)$ induced by the bijection with the set of faces of $\cP(m,n_1)$ is the same as that induced by the bijection with the set of faces of $\cP(m,n_2)$, where this can be seen
as follows.
For $n\ge m$,~\eqref{eq-RC} simplifies to 
\begin{equation}\label{eq-RC1}\cR_C=\begin{cases}[m]\,\cup\,\{S\mid\varnothing\subsetneq S\subseteq[m]\},&\text{if }C=(\,),\\
([m]\setminus A_\ell)\,\cup\,\{A_\ell\setminus A_{\ell-1},\ldots,A_\ell\setminus A_1\},&\text{otherwise},
\end{cases}\end{equation}
for any $C=(A_1\subsetneq\cdots\subsetneq A_\ell)\in\cC(m)=\cC'(m,n)$,
since in the formula for the first case in~\eqref{eq-RC}, the condition $|S|\le n-1$ or $|S|=m$ is equivalent to 
$|S|\le m$, and in the third case in~\eqref{eq-RC}, the condition $A_1=\varnothing$ and $|A_\ell|\ge n$ holds only if $A_1=\varnothing$ and $m=n=|A_\ell|$, for which the formula in the second case in~\eqref{eq-RC} can
be used instead.  Therefore, for $n\ge m$, $\cR_C$ is independent of~$n$, 
and so, using~\eqref{eq-RC-2}, the partial order induced on~$\cC(m)$ is independent of~$n$.
It now follows that the face lattices of $\cP(m,n_1)$ and $\cP(m,n_2)$ are isomorphic, as required.\end{proof}

\begin{remark}\label{rem-stell}
It is noted by Heuer and Striker~\cite[Theorem 5.17]{HS} that~$\cP(m,m)$
is combinatorially equivalent to the $m$-stellohedron, and hence, using Corollary~\ref{cor:comb-iso}, it follows that all $\cP(m,n)$ with $n\ge m$ are combinatorially equivalent to the $m$-stellohedron. 
For completeness, we now provide a definition of the $m$-stellohedron. For a graph~$G$ with vertex set~$[N]$, the graph associahedron of~$G$ can be
defined as $\textrm{Assoc}(G)=\sum_S\Conv(\{\e_i\mid i\in S\})$,
where this is a Minkowski sum over all nonempty and nonsingleton subsets~$S$ of~$[N]$ such that the subgraph of~$G$ induced by $S$ is connected, and $\e_i$ is the $i$th standard unit vector in $\RR^N$. 
(Singletons could be included here, which would simply result in a translation by
(1,\ldots,1).)
The $m$-stellohedron is the graph associahedron
of the star graph $K_{1,m}$, consisting of a central vertex $m+1$ connected to~$m$ vertices~$1,\ldots,m$, which gives 
\begin{equation}\textrm{Assoc}(K_{1,m})=\sum_{\varnothing\subsetneq S\subseteq[m]}\Conv(\{\e_i\mid i\in S\}\cup\{\e_{m+1}\}),\end{equation}
where $\e_i$ is in $\RR^{m+1}$.  It will also be
useful to consider a projection of $\textrm{Assoc}(K_{1,m})$ from $\RR^{m+1}$ to~$\RR^m$. Let $\psi\colon\RR^{m+1}\to\RR^m$ be given
by $(x_1,\ldots,x_{m+1})\mapsto(x_1,\ldots,x_m)$.  It can then be shown that $\psi\bigl(\textrm{Assoc}(K_{1,m})\bigr)$ is affinely isomorphic
(and hence combinatorially equivalent) to $\textrm{Assoc}(K_{1,m})$,
and that 
\begin{equation}\psi\bigl(\textrm{Assoc}(K_{1,m})\bigr)
=\sum_{\varnothing\subsetneq S\subseteq[m]}\Conv(\{\mathbf{0}\}\cup\{\e_i\mid i\in S\})
=\widehat{\Pi}(2^{m-1},2^{m-2},\ldots,2,1),\end{equation}
where $\e_i$ is in $\RR^m$,
and the final expression uses notation which will be introduced in Definition~\ref{def:Pi}.
\end{remark}

\begin{remark}
A generalization to all~$m$ and~$n$ of the combinatorial equivalence
of Remark~\ref{rem-stell} is obtained in~\cite[Corollary~7.4]{black2022flag}
(and is used in the proof of Theorem~\ref{thm-faces-chains} given
in~\cite[Theorem~7.5]{black2022flag}).
Specifically, it follows from~\cite[Corollary~7.4]{black2022flag} that~$\cP(m,n)$ with any~$m$ and~$n$ is combinatorially equivalent to 
\begin{equation}\label{blacksanyaldecomp1}
\sum_{\substack{S\subseteq[m]\\|S|\ge\max(1,m-n+1)}}\!\!\!\Conv(\{\e_i\mid i\in S\}\cup\{\e_{m+1}\}),\end{equation}
where this is a Minkowski sum, and~$\e_i$ is in $\RR^{m+1}$.  
It can also be shown, by using the projection~$\psi$ defined in Remark~\ref{rem-stell}, that $\cP(m,n)$ with any~$m$ and~$n$ is affinely isomorphic (and hence combinatorially equivalent) to the image under $\psi$ of~\eqref{blacksanyaldecomp1}, which is 
\begin{multline}
\sum_{\substack{S\subseteq[m]\\|S|\ge\max(1,m-n+1)}}\!\!\!\Conv(\{\mathbf{0}\}\cup\{\e_i\mid i\in S\})\\[-1mm]
=\begin{cases}\rule{0mm}{5.4mm}\widehat{\Pi}(2^{m-1},2^{m-2},\ldots,2,1),&\text{if }n\ge m,\\[2.8mm]
\widehat{\Pi}\Bigl(\sum_{i=0}^{n-1}\binom{m-1}{i},\sum_{i=0}^{n-2}\binom{m-2}{i},\ldots,\binom{m-n}{0},\underbrace{0,\ldots,0\!}_{m-n}\,\Bigr),&\text{if }n\le m,\end{cases}\end{multline}
where~$\e_i$ is in $\RR^m$,
and the right hand side uses notation which will be introduced in Definition~\ref{def:Pi}.  
Note also that an alternative Minkowski sum decomposition which is equal, rather 
than just combinatorially equivalent, to~$\cP(m,n)$ with $n\ge m-1$, will be given in~\eqref{eq-Minkowski1}. 
\end{remark}

Some examples which illustrate Theorem~\ref{thm-faces-chains} are as follows.

\begin{example}\label{ex-Pm1}
As an example of the bijection of Theorem~\ref{thm-faces-chains} with $m\ge n$, 
consider $\cP(m,1)$, which can easily be seen to be the standard $m$-simplex~$\Delta_m$.
Then the bijection from $\mathcal{C}(m,1)$ to the set of nonempty faces of $\cP(m,1)=\Delta_m$ is
\begin{align*}
(S)&\mapsto\{\mathbf{x}\in\Delta_m\mid x_i=0\text{ for all }i\in[m]\setminus S\}\\
&=\textrm{ConvexHull}(\{\mathbf{0}\}\cup\{\e_i\mid i\in S\}),\quad\text{for each }S\subseteq[m],\\[1mm]
(\varnothing\subsetneq S)&\mapsto\textstyle\bigl\{\mathbf{x}\in\Delta_m\bigm|x_i=0\text{ for all }i\in[m]\setminus S,\ \sum_{i=1}^mx_i=1\bigr\}\\
&=\textrm{ConvexHull}(\{\e_i\mid i\in S\}),\quad\text{for each nonempty }S\subseteq[m],\end{align*}
where a face corresponding to $(S)$ has dimension $|S|$, and a face corresponding to $(\varnothing\subsetneq S)$ has dimension $|S|-1$.
\end{example}

\begin{example}
As an example of the bijection of Theorem~\ref{thm-faces-chains} with $n\ge m$, consider the pentagon
\begin{align*}\cP(2,n)&=\{\mathbf{x}\in\RR^2\mid0\le x_1\le n,\ 0\le x_2\le n,\ x_1+x_2\le2n-1\}\\
&=\textrm{ConvexHull}(\{(0,0),\,(n,0),\,(0,n),\,(n,n-1),\,(n-1,n)\}),\end{align*}
for $n\ge2$.
Then the bijection from $\mathcal{C}(2,n)$ to the set of nonempty faces of $\cP(2,n)$ is
\begin{equation*}\begin{array}{l@{\qquad}l}
(\varnothing)\mapsto\{(0,0)\},\\
(\varnothing\subsetneq\{1\})\mapsto\{(n,0)\},&
(\varnothing\subsetneq\{2\})\mapsto\{(0,n)\},\\
(\varnothing\subsetneq\{1\}\subsetneq[2])\mapsto\{(n-1,n)\},&
(\varnothing\subsetneq\{2\}\subsetneq[2])\mapsto\{(n,n-1)\},\\
(\{1\})\mapsto[(0,0),(n,0)],&
(\{2\})\mapsto[(0,0),(0,n)],\\
(\{1\}\subsetneq[2])\mapsto[(0,n),(n-1,n)],&
(\{2\}\subsetneq[2]\})\mapsto[(n,0),(n,n-1)],\\
(\varnothing\subsetneq[2])\mapsto[(n,n-1),(n-1,n)],&
([2])\mapsto\cP(2,n),\end{array}\end{equation*}
where $[\mathbf{v},\mathbf{w}]$ denotes an edge between vertices $\mathbf{v}$ and $\mathbf{w}$
of $\cP(2,n)$.
\end{example}

\begin{example}\label{ex-Fmn}
As a final example of the bijection of Theorem~\ref{thm-faces-chains}, let $n\ge m$ and consider the facet $F(m,n)$ of~$\cP(m,n)$ which corresponds to the chain $(\varnothing\subsetneq[m])$, i.e.,
$F(m,n)=\cF_{(\varnothing\subsetneq[m])}=\bigl\{\mathbf{x}\in\cP(m,n)\bigm|\sum_{i=1}^mx_i=\binom{n+1}{2}-\binom{n-m+1}{2}\bigr\}$.
It can be seen, using Proposition~\ref{p-vertices}, that the vertices of~$F(m,n)$ consist of all vectors
obtained by permuting the entries of $(n,n-1,\ldots,n-m+1)$, so that~$F(m,n)$ is the permutohedron $\Pi(n,n-1,\ldots,n-m+1)$.
Using Remarks~\ref{rem-isomorphic} and~\ref{rem-face-lattice}, any nonempty face of $F(m,n)$ corresponds to a chain~$C$ in $\cB_m$ satisfying $\cR_{(\varnothing\subsetneq[m])}=\{[m]\}\subseteq\cR_C$,
which implies (using~\eqref{eq-RC} or~\eqref{eq-RC1}) that $C$ has the form $(\varnothing\subsetneq A_1\subsetneq\cdots\subsetneq A_{\ell}\subsetneq[m])$.
Accordingly, let $\widehat{\cC}(m)$ denote the set of all chains $(\varnothing\subsetneq A_1\subsetneq\cdots\subsetneq A_{\ell}\subsetneq[m])$ in $\cB_m$.
Then the restriction to $\widehat{\cC}(m)$ of the bijection of Theorem~\ref{thm-faces-chains}
is the well-known bijection (see, for example,~\cite[Proposition~2.6]{postnikov})
from $\widehat{\cC}(m)$ to the set of nonempty faces of $F(m,n)=\Pi(n,n-1,\ldots,n-m+1)$.
Specifically, $C=(\varnothing\subsetneq A_1\subsetneq \cdots\subsetneq A_\ell\subsetneq[m])\in\widehat{\cC}(m)$ is mapped to 
the face $\cF_C=\bigl\{\mathbf{x}\in F(m,n)\bigm|\sum_{i\in[m]\setminus A_j}x_i=\binom{m+1}{2}-\binom{|A_j|+1}{2},\text{ for all }1\le j\le\ell\bigr\}$,
which has dimension $m-|\ell|-1$.
\end{example}

For any $C\in \mathcal{C}(m,n)$, a construction of the vertices of the face $\cF_C$ was outlined briefly  within the proof of Theorem~\ref{thm-faces-chains},
in order to show to $\cF_C$ is nonempty.  In the following proposition, this construction is described in detail.

\begin{proposition}\label{prop-facevertices}
Let $C=(A_1\subsetneq \cdots\subsetneq A_\ell)$ be any chain in $\mathcal{C}(m,n)$, and consider the face $\cF_C$ of $\cP(m,n)$, as defined in Theorem~\ref{thm-faces-chains}.
Then the vertices of $\cF_C$ are the vectors in $\RR^m$ which satisfy the following:
\begin{enumerate}
\item All of the entries in positions $[m]\setminus A_\ell$ are $0$'s.
\item The entries in positions $A_{j+1}\setminus A_j$ are
\begin{equation*}n-|A_\ell\setminus A_{j+1}|,\:n-|A_\ell\setminus A_{j+1}|-1,\:\ldots,\:n-|A_\ell\setminus A_j|+1,\end{equation*}
in any order, for all $2\le j\le\ell-1$
and also for $j=1$ unless $A_1=\varnothing$ and $|A_\ell|\ge n$.
\item If $A_1\ne\varnothing$, then the entries in positions $A_1$ are
\begin{equation*}
n-|A_\ell\setminus A_1|,\:n-|A_\ell\setminus A_1|-1,\:\ldots,\:n-|A_\ell\setminus A_1|-k+1,\:\underbrace{0,\:\ldots,\:0\!}_{|A_1|-k}\,,\end{equation*}
in any order, and for any $0\le k\le\min(|A_1|,n-|A_\ell\setminus A_1|)$.
\item If $A_1=\varnothing$ and $|A_\ell|\ge n$, then the entries in positions $A_2$ are
\begin{equation*}n-|A_\ell\setminus A_2|,\:n-|A_\ell\setminus A_2|-1,\:\ldots,\:1,\:\underbrace{0,\:\ldots,\:0\!}_{|A_\ell|-n}\,,\end{equation*}
in any order.
\end{enumerate}
\end{proposition}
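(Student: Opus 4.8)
The plan is to use the standard fact that the vertices of a face $\cF$ of a polytope are exactly the vertices of the ambient polytope that lie in $\cF$. Hence, by Proposition~\ref{p-vertices}, a vertex of $\cF_C$ is a vector $\mathbf{v}$ obtained by choosing a subset $S\subseteq[m]$ with $|S|\le\min(m,n)$, putting $0$'s in the positions $[m]\setminus S$, and putting $n,n-1,\ldots,n-|S|+1$ in some order in the positions of~$S$, subject to $\mathbf{v}$ satisfying all the hyperplane equations~(i)--(iii) defining $\cF_C$ in Theorem~\ref{thm-faces-chains}. I would therefore prove the proposition by determining exactly which such $\mathbf{v}$ satisfy those equations, and then checking the converse: that each vector of the form described in (1)--(4) is a vertex of $\cP(m,n)$ (its nonzero entries being distinct and forming a run $n,n-1,\ldots$) which satisfies (i)--(iii).

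The main tool is the elementary rearrangement bound: if $w_1,\ldots,w_t\in\{0,1,\ldots,n\}$ have distinct nonzero values, then $\sum_i w_i\le\binom{n+1}{2}-\binom{n+1-t}{2}$, with the convention $\binom{n+1-t}{2}=0$ for $t\ge n$, and with equality precisely when $\{w_1,\ldots,w_t\}=\{n,n-1,\ldots,n-t+1\}$ if $t\le n$, and when $\{w_1,\ldots,w_t\}\supseteq\{1,\ldots,n\}$ with the remaining entries equal to~$0$ if $t\ge n$; moreover, if the $w_i$ are also forbidden to take the $p$ largest values $n,\ldots,n-p+1$, the threshold simply shifts down by~$p$. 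The equations~(i) immediately give $S\subseteq A_\ell$, which is part~(1). Writing $B_j=A_{j+1}\setminus A_j$ for the blocks of the chain and taking successive differences of the equations~(ii) for $j=\ell-1,\ell-2,\ldots$, one finds that $\sum_{i\in B_j}v_i$ equals the sum of the $|B_j|$ largest yet-unused values $n-|A_\ell\setminus A_{j+1}|,\ldots,n-|A_\ell\setminus A_j|+1$; feeding this into the bound, inductively from the top of the chain downward, forces the entries of $\mathbf{v}$ in each such block $B_j$ to be exactly that run, which is part~(2). Here the hypothesis $C\in\cC(m,n)$ is exactly what guarantees that the relevant partial sums of block sizes stay at most $n$ (that is, $|A_\ell\setminus A_1|\le n-1$, or $|A_\ell\setminus A_2|\le n-1$ when $A_1=\varnothing$), so that the $t\ge n$ branch of the bound never interferes with the blocks lying above the bottom one.

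It remains to treat the bottom of the chain, in the three cases of the statement. If $A_1\ne\varnothing$, then the $j=1$ equation of~(ii) is present and pins the entries in $A_\ell\setminus A_1$ to the top $|A_\ell\setminus A_1|$ values, so the only remaining freedom is in $A_1$; since the nonzero entries of a vertex of $\cP(m,n)$ form a consecutive run down from~$n$, the entries in $A_1$ must be the next $k$ values after those top ones together with $|A_1|-k$ zeros, where $0\le k\le\min(|A_1|,n-|A_\ell\setminus A_1|)$ (the upper bound because $A_1$ has only $|A_1|$ positions and because the smallest nonzero entry of a vertex is at least~$1$), which is part~(3); this also covers the degenerate chain $C=(A_1)$. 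If $A_1=\varnothing$ and $|A_\ell|\le n-1$, the $j=1$ equation is again present and (by the bound, using $|A_\ell|\le n-1$) forces all of $A_\ell$ to be nonzero with entries $\{v_i\mid i\in A_\ell\}=\{n,\ldots,n-|A_\ell|+1\}$, so $\mathbf{v}$ is completely determined by part~(2) carried down to $j=1$, and parts~(3) and~(4) are vacuous. If $A_1=\varnothing$ and $|A_\ell|\ge n$, then the equation~(iii) replaces the $j=1$ equation and forces $\sum_{i\in A_\ell}v_i=\binom{n+1}{2}$, i.e.\ all of $1,\ldots,n$ occur; subtracting the already-determined top values from this total leaves $\{1,2,\ldots,n-|A_\ell\setminus A_2|\}$ together with $|A_\ell|-n$ zeros to be placed in $A_2$, which is part~(4), the inequality $|A_\ell|\ge n$ being precisely what makes $A_2$ big enough. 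In each case, reversing the chain of implications shows conversely that any vector of the stated shape satisfies (i)--(iii) and is a vertex of $\cP(m,n)$, hence a vertex of $\cF_C$.

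I expect the main obstacle to be purely organizational: keeping the ``top-down'' induction consistent across the four block-by-block cases, while correctly accommodating the short or degenerate chains (such as $(\varnothing)$, $([m])$, $(\varnothing\subsetneq[m])$, and length-$2$ chains with $A_1=\varnothing$) and the convention $\binom{n+1-t}{2}=0$ for $t\ge n$, which is the source of the case distinctions in the statement. Once the rearrangement bound and the membership condition $C\in\cC(m,n)$ are in hand, no further conceptual difficulty should arise.
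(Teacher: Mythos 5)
Your proposal is correct and follows essentially the same route as the paper: both identify the vertices of $\cF_C$ as the vertices of $\cP(m,n)$ (as characterized in Proposition~\ref{p-vertices}) lying on the hyperplanes (i)--(iii), take successive differences of the type~(ii) equations to localize the entry sums to the blocks $A_{j+1}\setminus A_j$, and then use the fact that the nonzero entries of a vertex form the consecutive run $n,n-1,\ldots,n-k+1$ to pin down each block. The paper explicitly works out only conditions~(1) and~(2) and omits the case analysis for~(3) and~(4), which you carry out in full (correctly), so your write-up is if anything slightly more complete.
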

\begin{proof}
The result can be proved by using the characterization of the vertex set of $\cP(m,n)$ given in Proposition~\ref{p-vertices},
together the fact that the vertex set of $\cF_C$ is the intersection of the vertex set of $\cP(m,n)$ with the hyperplanes given in (i)--(iii) of Theorem~\ref{thm-faces-chains}.  
The validity of conditions~(1) and~(2) in the current proposition will be now be confirmed.  The validity of conditions~(3) and~(4)
can also be checked straightforwardly, but the details of this will be omitted.

It can immediately be seen that the intersection of the vertex set of~$\cP(m,n)$ with the hyperplanes of type~(i) gives condition~(1).
The intersection with the hyperplanes of type~(ii) implies that a vertex $\mathbf{v}$ of $\cF_C$ satisfies
$\sum_{i\in A_\ell\setminus A_j}v_i=\binom{n+1}{2}-\binom{n+1-|A_\ell\setminus A_{j}|}{2}$,
i.e.,
\begin{equation}\label{eq-vert-cond1}\textstyle\sum_{i\in A_\ell\setminus A_j}v_i=n+(n-1)+\ldots+(n-|A_\ell\setminus A_j|+1),\end{equation}
for all $2\le j\le \ell-1$, and also for $j=1$ unless $A_1=\varnothing$ and $|A_\ell|\ge n$.
By considering, in~\eqref{eq-vert-cond1}, the $j=\ell-1$ case, the $j=\ell-2$ case minus the $j=\ell-1$ case, 
the $j=\ell-3$ case minus the $j=\ell-2$ case, etc., it follows that the equations of~\eqref{eq-vert-cond1}
are equivalent to 
\begin{equation}\label{eq-vert-cond2}\textstyle\sum_{i\in A_{j+1}\setminus A_j}v_i=(n-|A_\ell\setminus A_{j+1}|)+(n-|A_\ell\setminus A_{j+1}|-1)+\ldots+(n-|A_\ell\setminus A_j|+1),\end{equation}
for the same range of $j$.  Using the characterization of $\mathbf{v}$ given by Proposition~\ref{p-vertices}
(and especially the property that any nonzero entries of $\mathbf{v}$ are distinct, and take the values $n$, $n-1$, \ldots, $n-k+1$ for some~$k$),
it now follows that the equations of~\eqref{eq-vert-cond2} give condition~(2).
\end{proof}

We illustrate Proposition~\ref{prop-facevertices} in the following example.
\begin{example}\label{ex-vertface}
Let $m=10$ and $n=6$, and consider the chains
\begin{align*}C_1&=(\{1,2,3\}\subsetneq\{1,2,3,4,5\}\subsetneq\{1,2,3,4,5,6,7\})\\
\text{and \ }C_2&=(\varnothing\subsetneq\{1,2,3\}\subsetneq\{1,2,3,4,5\}\subsetneq\{1,2,3,4,5,6,7\})\end{align*}
in $\cC(10,6)$.
Then the vertices of
$\cF_{C_1}=\{\mathbf{x}\in\cP(10,6)\mid x_8=x_9=x_{10}=0,\, x_6+x_7=11,\,x_4+x_5+x_6+x_7=18\}
=\{\mathbf{x}\in\cP(10,6)\mid x_8=x_9=x_{10}=0,\, x_6+x_7=11,\,x_4+x_5=7\}$
are of the form
\[\boxed{012}\boxed{34}\boxed{56}\:000,\ \ \boxed{002}\boxed{34}\boxed{56}\:000\text{ \ or \ }\;000\:\boxed{34}\boxed{56}\:000,\]
and the vertices of
$\cF_{C_2}=\{\mathbf{x}\in\cP(10,6)\mid x_8=x_9=x_{10}=0,\, x_6+x_7=11,\,x_4+x_5+x_6+x_7=18,\,
x_1+\ldots+x_{10}=21\}
=\{\mathbf{x}\in\cP(10,6)\mid x_8=x_9=x_{10}=0,\, x_6+x_7=11,\,x_4+x_5=7,\,x_1+x_2+x_3=3\}$
are of the form
\[\boxed{012}\boxed{34}\boxed{56}\:000,\]
where the entries within each box can appear in any order.
It follows that $\cF_{C_1}$ has $3!\cdot2!\cdot2!+3\cdot2!\cdot2!+2!\cdot2!=40$
vertices, and that $\cF_{C_2}$ has $3!\cdot2!\cdot2!=24$  vertices.
\end{example}

\subsection{The $h$-polynomial of $\cP(m,n)$}
We now compute the $h$-polynomial of $\cP(m,n)$ using Theorem~\ref{thm-faces-chains}.

Given a $d$-dimensional polytope $P$ and $0\le i\le d$, let $f_i(P)$ denote the number of $i$-dimensional faces of $P$.
The $f$-\emph{polynomial} of~$P$ is then defined as $f_P(t)=\sum_{i=0}^df_i(P)\,t^i$, and
the $h$-\emph{polynomial} of~$P$ is defined as $h_P(t)=f_P(t-1)$.  It is known that if~$P$ is a simple polytope,
then its $h$-polynomial is palindromic, i.e.,
\begin{equation}\label{eq-h-palind}
h_P(t)=t^d\,h_P(1/t),\end{equation}
and this corresponds to the Dehn--Sommerville relations for $f_i(P)$.

Since Eulerian polynomials will play a role in the $h$-polynomial of $\cP(m,n)$, we proceed to introduce them.
For a positive integer $m$, let $A_m(t)$ denote the Eulerian polynomial for~$\mathfrak{S}_m$, i.e., $A_m(t)=\sum_{i=0}^{m-1}A(m,i)\,t^i$,
where the Eulerian number $A(m,i)$ is the number of permutations in~$\mathfrak{S}_m$ with exactly $i$ descents. Also, let $A(0,i)=\delta_{0,i}$ and $A_0(t)=1$.  It can easily be seen that the Eulerian numbers and polynomials satisfy the symmetry
\begin{equation}\label{eq-Eulerian-symm}A(m,i)=A(m,m-i-1)\quad\text{and}\quad A_m(t)=t^{m-1}A_m(1/t),\quad\text{for positive }m.\end{equation}

The $f$-polynomials and $h$-polynomials of the standard $m$-simplex $\Delta_m$ and regular permutohedron
$\Pi(1,\ldots,m)$ are known to be 
\begin{gather}\label{eq-f-simp-perm}
f_{\Delta_m}(t)=\sum_{i=0}^m\binom{m+1}{i+1}t^i,\quad f_{\Pi(1,\ldots,m)}(t)=\sum_{i=0}^{m-1}(m-i)!\,S(m,m-i)\,t^i,\\
\label{eq-h-simp-perm}h_{\Delta_m}(t)=\sum_{i=0}^mt^i,\quad h_{\Pi(1,\ldots,m)}(t)=A_m(t),
\end{gather}
where $S(m,m-i)$ denotes a Stirling number of the second kind,
for which $(m-i)!\,S(m,m-i)$ is the number of chains $(\varnothing\subsetneq A_1\subsetneq \cdots\subsetneq A_{m-i-1}\subsetneq[m])$ in $\mathcal{B}_m$.
Note that~\eqref{eq-f-simp-perm} and~\eqref{eq-h-simp-perm} can be obtained from Examples~\ref{ex-Pm1} and~\ref{ex-Fmn},
in which the faces of~$\Delta_m$ and~$\Pi(1,\ldots,m)$,
respectively, are characterized.

\begin{theorem}\label{thm-h-vector-bijective}
The $h$-polynomial of $\cP(m,n)$ is 
\begin{equation}\label{eq-h-poly-gen}h_{\cP(m,n)}(t)=1+\sum_{i=0}^{n-1}\;\sum_{j=1}^{m-i}\binom{m}{i}\,A_i(t)\:t^j.\end{equation}
This is equivalent to the recurrence relation
\begin{equation}\label{eq-h-poly-rec}h_{\cP(m,n+1)}(t)=h_{\cP(m,n)}(t)+\binom{m}{n}\,A_n(t)\sum_{i=1}^{m-n}t^i,\end{equation}
together with the initial condition 
\begin{equation}\label{eq-h-poly-init}h_{\cP(m,1)}(t)=\sum_{i=0}^mt^i.\end{equation}
\end{theorem}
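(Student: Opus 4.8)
The plan is to read off $h_{\cP(m,n)}(t)$ directly from the face bijection of Theorem~\ref{thm-faces-chains}. For any polytope $P$ we have $h_P(t)=f_P(t-1)=\sum_{F}(t-1)^{\dim F}$, the sum being over all nonempty faces $F$ of $P$; so Theorem~\ref{thm-faces-chains} together with Remark~\ref{rem-missingranks} gives
\[h_{\cP(m,n)}(t)=\sum_{C=(A_1\subsetneq\cdots\subsetneq A_\ell)\in\cC(m,n)}(t-1)^{\,|A_\ell|-\ell+1}.\]
It is probably cleanest to prove~\eqref{eq-h-poly-gen} by evaluating this sum over chains and then to extract~\eqref{eq-h-poly-rec} and~\eqref{eq-h-poly-init} from it; alternatively one can evaluate only the sub-sum over $\cC(m,n+1)\setminus\cC(m,n)$ (the chains whose largest set and smallest nonempty set differ in size by exactly $n$) to prove~\eqref{eq-h-poly-rec} directly, and obtain~\eqref{eq-h-poly-init} from $\cP(m,1)=\Delta_m$ (Example~\ref{ex-Pm1}) via~\eqref{eq-h-simp-perm}.

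The key step is a decomposition of the chains. I would first set aside $(\varnothing)$, which contributes $1$. Any other $C\in\cC(m,n)$ has a smallest nonempty set $B$ (namely $A_1$ if $A_1\ne\varnothing$, and $A_2$ otherwise) and a largest set $A_\ell$; put $U:=A_\ell\setminus B$ and $i:=|U|$. The defining inequality of $\cC(m,n)$ says exactly that $i\le n-1$. Moreover $C$ is completely recovered from the sub-chain $\gamma$ of $C$ running from $B$ to $A_\ell$ (a chain of subsets of $[m]$ lying in the interval $[B,A_\ell]$, which is isomorphic to $\cB_i$) together with a bit $\epsilon\in\{0,1\}$ recording whether $\varnothing\in C$; and for a given $\gamma$ both choices of $\epsilon$ produce a chain in $\cC(m,n)$, since prepending or removing $\varnothing$ does not change the relevant cardinality difference. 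So the sum becomes a sum over all such pairs $(\gamma,\epsilon)$. The crucial observation is that if $\gamma$ has $q$ sets, then the chains with $\epsilon=0$ and $\epsilon=1$ have $|A_\ell|-q+1$ and $|A_\ell|-q$ missing ranks respectively; hence summing $(t-1)^{(\cdot)}$ over $\epsilon\in\{0,1\}$ gives $t\,(t-1)^{|A_\ell|-q}$, which is the origin of the factors $t^j$ in~\eqref{eq-h-poly-gen}.

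Next I would evaluate the remaining sum over $\gamma$ in stages, grouping by $B$ and $U$. Writing $|A_\ell|-q=|B|-1+r$, where $r$ is the number of missing ranks of $\gamma$ under the isomorphism $[B,A_\ell]\cong\cB_i$ sending $B$ to $\varnothing$ and $A_\ell$ to $[i]$, the chains $\gamma$ for a fixed pair $(B,U)$ contribute $t\,(t-1)^{|B|-1}\sum_\gamma(t-1)^{r}$ in total. The inner sum is $A_i(t)$: by the standard chain description of the faces of the permutohedron (the paragraph around~\eqref{eq-f-simp-perm}--\eqref{eq-h-simp-perm}, and Example~\ref{ex-Fmn}), chains from $\varnothing$ to $[i]$ in $\cB_i$ biject with the nonempty faces of $\Pi(1,\ldots,i)$, with $r$ equal to the face dimension, so $\sum_\gamma(t-1)^{r}=h_{\Pi(1,\ldots,i)}(t)=A_i(t)$ (also valid for $i=0$, where $A_0(t)=1$). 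Summing over the nonempty sets $B\subseteq[m]\setminus U$ via the binomial theorem,
\[\sum_{\varnothing\ne B\subseteq[m]\setminus U}t\,(t-1)^{|B|-1}=\frac{t}{t-1}\bigl((1+(t-1))^{m-i}-1\bigr)=\sum_{j=1}^{m-i}t^{\,j},\]
so all chains with a fixed $U$ of size $i$ contribute $A_i(t)\sum_{j=1}^{m-i}t^{j}$; summing over the $\binom{m}{i}$ sets $U$ of size $i$, then over $i=0,\ldots,n-1$, and adding the $1$ from $(\varnothing)$, yields exactly~\eqref{eq-h-poly-gen}. The equivalence with~\eqref{eq-h-poly-rec}--\eqref{eq-h-poly-init} is then immediate: the difference of~\eqref{eq-h-poly-gen} at $n+1$ and at $n$ is its $i=n$ summand $\binom{m}{n}A_n(t)\sum_{j=1}^{m-n}t^j$, which is~\eqref{eq-h-poly-rec}; the $n=1$ case of~\eqref{eq-h-poly-gen} is~\eqref{eq-h-poly-init}; and~\eqref{eq-h-poly-rec}--\eqref{eq-h-poly-init} clearly determine $h_{\cP(m,n)}(t)$ for all $n\ge1$.

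The main difficulty I anticipate is bookkeeping rather than conceptual: verifying from Remark~\ref{rem-missingranks} that $C$ has exactly $|B|+r$ missing ranks when $\epsilon=0$ and $|B|-1+r$ when $\epsilon=1$, and checking the degenerate configurations — $U=\varnothing$ (so $\gamma=(B)$, $i=0$), the chains $([m])$ and $(\varnothing\subsetneq[m])$, and the empty index ranges when $i=m$ — so that the pairing over $\epsilon$ really is a bijection with nothing omitted or double-counted. After that, the computation reduces to the elementary binomial identity above together with the already-known fact that $h_{\Pi(1,\ldots,i)}(t)=A_i(t)$.
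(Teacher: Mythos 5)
Your proposal is correct and rests on essentially the same ideas as the paper's proof: both use the chain--face bijection of Theorem~\ref{thm-faces-chains}, decompose each chain into its smallest nonempty set, its top set, the inner subchain (whose contribution is $A_i(t)$ via the permutohedron), and the presence or absence of $\varnothing$, and both identify $\cP(m,1)=\Delta_m$ for the base case. The only difference is packaging: the paper proves the recurrence~\eqref{eq-h-poly-rec} by applying this decomposition to the chains in $\cC(m,n+1)\setminus\cC(m,n)$ and counting at the level of $f$-vectors, whereas you evaluate the full sum $\sum_C(t-1)^{\,|A_\ell|-\ell+1}$ directly to get~\eqref{eq-h-poly-gen} in one pass, with your $\epsilon$-pairing producing the factor $t$ that the paper obtains as the $(t+1)$ in its $f$-polynomial identity.
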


\begin{proof}
The equivalence of~\eqref{eq-h-poly-gen} to~\eqref{eq-h-poly-rec} and~\eqref{eq-h-poly-init} 
can easily be checked.  So, the theorem will be proved by confirming~\eqref{eq-h-poly-rec} and~\eqref{eq-h-poly-init}. 

The initial condition~\eqref{eq-h-poly-init} is immediate since,
as seen in Example~\ref{ex-Pm1}, we have $\cP(m,1)=\Delta_m$,
and $h_{\Delta_m}(t)$ is then given by~\eqref{eq-h-simp-perm}.
(Note that an alternative initial condition would be $h_{\cP(m,0)}(t)=1$, since this
is equivalent to $h_{\cP(m,1)}(t)=\sum_{i=0}^mt^i$ using~\eqref{eq-h-poly-rec} with $n=0$,
and it is also consistent with an interpretation of $\cP(m,0)$ as consisting of a single point, i.e., the origin in $\bbR^m$.)

We now proceed to the recurrence relation~\eqref{eq-h-poly-rec}.
For $n\ge m$, we have $\sum_{i=1}^{m-n}t^i=0$, since this sum is empty,
and Corollary~\ref{cor:comb-iso} gives $h_{\cP(m,n+1)}(t)=h_{\cP(m,n)}(t)$,
so that~\eqref{eq-h-poly-rec} follows immediately.  Hence, in the rest of
this proof, it will be assumed that $n<m$, and~\eqref{eq-h-poly-rec} will be verified for that case.

First, use~\eqref{eq-h-simp-perm} to
rewrite \eqref{eq-h-poly-rec} as $h_{\cP(m,n+1)}(t)=h_{\cP(m,n)}(t)+t\,\binom{m}{n}\,h_{\Pi(1,\ldots,n)}(t)\,h_{\Delta_{m-n-1}}(t)$,
which is equivalent to
\begin{equation}\label{eq-f-polynomial}
f_{\cP(m,n+1)}(t)=f_{\cP(m,n)}(t)+(t+1)\,\binom{m}{n}\,f_{\Pi(1,\ldots,n)}(t)\,f_{\Delta_{m-n-1}}(t).\end{equation}
Taking coefficients of $t^k$ on both sides of~\eqref{eq-f-polynomial},
for $k=0,\ldots,m$, gives
\begin{multline}\label{eq-f-numbers}
f_k\bigl(\cP(m,n+1)\bigr)=f_k\bigl(\cP(m,n)\bigr)+\binom{m}{n}\sum_{\substack{i,j\ge0\\i+j=k-1}}f_i\bigl(\Pi(1,\ldots,n)\bigr)f_j(\Delta_{m-n-1})\\
+\binom{m}{n}\sum_{\substack{i,j\ge0\\i+j=k}}f_i\bigl(\Pi(1,\ldots,n)\bigr)f_j(\Delta_{m-n-1}).\end{multline}
Let $\mathcal{C}(m,n)_k$ denote the set of chains in $\mathcal{C}(m,n)$ with $k$ missing ranks, and partition $\mathcal{C}(m,n+1)_k$ into the sets
\begin{itemize}
\item $\mathcal{C}(m,n)_k$,
\smallskip
\item $\overline{\mathcal{C}}(m,n)_k=\{(A_1\subsetneq\cdots\subsetneq A_\ell)\in\mathcal{C}(m,n+1)_k\setminus\mathcal{C}(m,n)_k\mid A_1\neq\varnothing\}$\\
$\phantom{\overline{\mathcal{C}}(m,n)_k}=\{(A_1\subsetneq\cdots\subsetneq A_\ell)\in\mathcal{C}(m,n+1)\mid |A_\ell|-\ell+1=k,\;A_1\neq\varnothing,\;|A_\ell|-|A_1|=n\}$,
\smallskip
\item $\widetilde{\mathcal{C}}(m,n)_k=\{(A_1\subsetneq\cdots\subsetneq A_\ell)\in \mathcal{C}(m,n+1)_k\setminus \mathcal{C}(m,n)_k\mid A_1=\varnothing\}$\\
$\phantom{\widetilde{\mathcal{C}}(m,n)_k}=\{(A_1\subsetneq\cdots\subsetneq A_\ell)\in\mathcal{C}(m,n+1)\mid |A_\ell|-\ell+1=k,\;A_1=\varnothing,\;|A_\ell|-|A_2|=n\}$,
\end{itemize}
where the expression in  Remark~\ref{rem-missingranks} for the number of missing ranks 
in a chain has been used.
By Theorem~\ref{thm-faces-chains}, we have $f_k\bigl(\cP(m,n+1)\bigr)=|\mathcal{C}(m,n+1)_k|$ and
$f_k\bigl(\cP(m,n)\bigr)=|\mathcal{C}(m,n)_k|$.
Hence, the required result~\eqref{eq-f-numbers} will follow by showing that
\begin{align}\label{eq-empty}|\overline{\mathcal{C}}(m,n)_k|&=\binom{m}{n}\sum_{\substack{i,j\ge0\\i+j=k-1}}f_i\bigl(\Pi(1,\ldots,n)\bigr)\binom{m-n}{j+1}\\
\intertext{and}
\label{eq-not-empty}|\widetilde{\mathcal{C}}(m,n)_k|&=\binom{m}{n}\sum_{\substack{i,j\ge0\\i+j=k}}f_i\bigl(\Pi(1,\ldots,n)\bigr)\binom{m-n}{j+1},\end{align}
where $f_j(\Delta_{m-n-1})=\binom{m-n}{j+1}$ has been applied.  We will also use the fact that,
for any $S\subseteq [m]$ of size~$n$, $f_i\bigl(\Pi(1,\ldots,n)\bigr)$ is the number of chains in~$\mathcal{B}_m$, with~$i$ missing ranks,
of the form $(\varnothing\subsetneq A_1\subsetneq\cdots\subsetneq A_{n-i-1}\subsetneq S)$.

We obtain~\eqref{eq-empty} by constructing the chains in $\overline{\mathcal{C}}(m,n)_k$ using the following procedure.
First, choose $S\subseteq [m]$ of size $n$.
Next, choose a chain $(\varnothing\subsetneq A_1\subsetneq\cdots\subsetneq A_{n-i-1}\subsetneq S)$
with $i$ missing ranks, for some $i\in\{0,\ldots,k-1\}$.
Last, choose $T\subseteq[m]\setminus S$ of size $j+1=k-i$, so that
\begin{equation*}\bigl(T\subsetneq(A_1\cup T)\subsetneq\cdots\subsetneq(A_{n-i-1}\cup T)\subsetneq(S\cup T)\bigr)\in\overline{\mathcal{C}}(m,n)_k,\end{equation*}
where the requirement $T\ne\varnothing$ follows from $i\le k-1$ 
and $|T|=k-i$, and it can be checked that every chain in
$\overline{\mathcal{C}}(m,n)_k$ is constructed uniquely.

Similarly, we obtain~\eqref{eq-not-empty} by constructing the chains in $\widetilde{\mathcal{C}}(m,n)_k$ as follows.
First, choose $S\subseteq[m]$ of size~$n$.
Next, choose a chain $(\varnothing\subsetneq A_1\subsetneq\cdots\subsetneq A_{n-i-1}\subsetneq S)$
with $i$ missing ranks, for some $i\in\{0,\ldots,k\}$.
Last, choose $T\subseteq[m]\setminus S$ of size $j+1=k-i+1$, so that
\begin{equation*}\bigl(\varnothing\subsetneq T\subsetneq(A_1\cup T)\subsetneq\cdots\subsetneq(A_{n-i-1}\cup T)\subsetneq(S\cup T)\bigr)\in\widetilde{\mathcal{C}}(m,n)_k,\end{equation*}
where the requirement $T\ne\varnothing$ follows from $i\le k$ 
and $|T|=k-i+1$, and it can be checked that every chain in
$\widetilde{\mathcal{C}}(m,n)_k$ is constructed uniquely.
\end{proof}

For $n\ge m$, we can obtain a simpler expression for $h_{\cP(m,n)}(t)$, as given in the following corollary to Theorem~\ref{thm-h-vector-bijective}.

\begin{corollary}\label{coroll-h-poly-1}
For $n\ge m$, the $h$-polynomial of $\cP(m,n)$ is
\begin{equation}\label{eq-h-poly-1}h_{\cP(m,n)}(t)=1+t\,\sum_{i=1}^m\binom{m}{i}A_i(t).\end{equation}
\end{corollary}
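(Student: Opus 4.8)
The plan is to specialize the general formula \eqref{eq-h-poly-gen} of Theorem~\ref{thm-h-vector-bijective} to the range $n\ge m$ and simplify the resulting double sum. First I would observe that when $n\ge m$, every index $i$ in the outer sum of \eqref{eq-h-poly-gen} that contributes a nonempty inner sum satisfies $i\le m-1$ (since the inner sum $\sum_{j=1}^{m-i}t^j$ is empty once $i\ge m$), so the outer sum effectively runs only over $i=0,\ldots,m-1$ and the upper limit $n-1$ is irrelevant; this is the key place where the hypothesis $n\ge m$ is used, and it matches the combinatorial-equivalence statement of Corollary~\ref{cor:comb-iso} (all $\cP(m,n)$ with $n\ge m$ share the same $h$-polynomial). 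Thus
\begin{equation*}
h_{\cP(m,n)}(t)=1+\sum_{i=0}^{m-1}\binom{m}{i}\,A_i(t)\sum_{j=1}^{m-i}t^j.
\end{equation*}

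Next I would split off the $i=0$ term, where $A_0(t)=1$ and $\binom{m}{0}=1$, contributing $\sum_{j=1}^{m}t^j$, and for $i\ge1$ rewrite $\sum_{j=1}^{m-i}t^j = t\cdot\frac{t^{m-i}-1}{t-1}$ (equivalently $t\,h_{\Delta_{m-i-1}}(t)$, with the convention that this is $0$ when $i=m$). The goal expression \eqref{eq-h-poly-1} is $1+t\sum_{i=1}^{m}\binom{m}{i}A_i(t)$, so after subtracting $1$ from both sides the task reduces to the identity
\begin{equation*}
\sum_{j=1}^{m}t^j+\sum_{i=1}^{m-1}\binom{m}{i}A_i(t)\sum_{j=1}^{m-i}t^j \;=\; t\sum_{i=1}^{m}\binom{m}{i}A_i(t).
\end{equation*}
The cleanest route is to fix the power of $t$: compare the coefficient of $t^{p}$ on both sides for each $p=1,\ldots,m$. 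On the right, the coefficient of $t^p$ is $\sum_{i=1}^{m}\binom{m}{i}A(i,p-1)$ (reading off $t^{p-1}$ from $A_i(t)$, where $A(i,p-1)=0$ if $p-1\ge i$). On the left, the $i=0$ term contributes $1=\binom{m}{0}A(0,0)$ only when... — rather than push this through directly, I would instead recognize the identity as a known generating-function fact: $\sum_{i=0}^m\binom{m}{i}A_i(t)\,t^{\,m-i}$ has a clean closed form, and more to the point, the combinatorial meaning via Theorem~\ref{thm-faces-chains} gives the shortest proof.

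Indeed, the most robust approach — and the one I would actually write — is combinatorial rather than algebraic: since $h_{\cP(m,n)}(t)=f_{\cP(m,n)}(t-1)$ and Theorem~\ref{thm-faces-chains} identifies $f_k(\cP(m,n))$ with the number of chains in $\cC(m,n)$ having $k$ missing ranks, for $n\ge m$ this is the number of \emph{all} chains in $\cB_m$ (including the empty chain) with $k$ missing ranks (by Remark~\ref{rem-isomorphic}). I would then classify a nonempty chain $C=(A_1\subsetneq\cdots\subsetneq A_\ell)$ in $\cB_m$ by $|A_\ell|=i$: the top set $A_\ell$ can be chosen in $\binom{m}{i}$ ways, and the chains $(\varnothing\subsetneq\cdots)$ or $(A_1\subsetneq\cdots)$ below it with $A_\ell$ fixed are exactly the faces of the permutohedron $\Pi(1,\ldots,i)$ together with one extra "missing-rank" bookkeeping factor, matching the known $f_{\Pi(1,\ldots,i)}(t)=\sum_{j}(i-j)!\,S(i,i-j)t^j$ of \eqref{eq-f-simp-perm}. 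Summing $\binom{m}{i}(t+1)f_{\Pi(1,\ldots,i)}(t+1)$-type contributions over $i$ and adding $1$ for the empty chain reproduces, after the shift $t\mapsto t-1$, exactly $1+t\sum_{i=1}^m\binom{m}{i}A_i(t)$ using $h_{\Pi(1,\ldots,i)}(t)=A_i(t)$ from \eqref{eq-h-simp-perm}; the $t$ out front accounts for whether rank $i$ itself is present or missing in $C$. The main obstacle is getting the bookkeeping of missing ranks exactly right — specifically, carefully tracking the single extra $+1$ or factor of $t$ that distinguishes "the top set has full rank among those appearing" from "there is a gap just below the top" — and making sure the $i=0$ and $i=m$ edge cases (the chain $(\varnothing)$ and chains containing $[m]$) are absorbed correctly; once that is pinned down the algebra is a one-line application of \eqref{eq-h-simp-perm}. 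I would present the combinatorial derivation, and then remark that it can alternatively be checked by extracting the coefficient of $t^j$ in \eqref{eq-h-poly-gen} and using the Eulerian-number symmetry \eqref{eq-Eulerian-symm}.
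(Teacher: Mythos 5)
Your proposal is correct in outline but takes a genuinely different route from the paper, so it is worth comparing the two. The paper completes exactly the algebraic reduction that you begin and then abandon: it extends the sum in \eqref{eq-h-poly-gen} to $i=m$, writes $\sum_{j=1}^{m-i}t^j=t(t^{m-i}-1)/(t-1)$, and then closes the computation with the Eulerian identity \eqref{eq-Eulerian-id}, $1+t\sum_{i=1}^m\binom{m}{i}A_i(t)=\sum_{i=0}^m\binom{m}{i}A_i(t)\,t^{m-i}$, which it imports from a binomial identity of Chu, Graham and Knuth rather than proving combinatorially. Your alternative --- classifying the chains of $\cC(m,n)$ (all nonempty chains in $\cB_m$, by Remark~\ref{rem-isomorphic}) according to the top element $A_\ell=S$ with $|S|=i$ --- does work, avoids the external identity, and in fact would yield a bijective proof of \eqref{eq-Eulerian-id} when combined with \eqref{eq-h-poly-gen}. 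The bookkeeping you flag as the main obstacle resolves as follows: for fixed $S$ with $|S|=i\ge1$, the chains ending at $S$ that \emph{contain} $\varnothing$ correspond bijectively to chains $(\varnothing\subsetneq B_1\subsetneq\cdots\subsetneq B_{\ell-2}\subsetneq[i])$, i.e.\ to faces of $\Pi(1,\ldots,i)$ as in Example~\ref{ex-Fmn}, with number of missing ranks equal to the dimension of the face; deleting $\varnothing$ from such a chain adds exactly one missing rank (namely rank $0$) and gives the chains not containing $\varnothing$. Hence
\begin{equation*}
f_{\cP(m,n)}(t)=1+(1+t)\sum_{i=1}^m\binom{m}{i}\,f_{\Pi(1,\ldots,i)}(t),
\end{equation*}
and substituting $t\mapsto t-1$ together with $h_{\Pi(1,\ldots,i)}(t)=A_i(t)$ from \eqref{eq-h-simp-perm} gives \eqref{eq-h-poly-1} immediately. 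Two small corrections to your write-up: the binary factor $(1+t)$ records whether rank $0$ (the set $\varnothing$) lies in the chain, not whether ``rank $i$ itself'' does --- rank $i=|A_\ell|$ is always present, being the rank of the top element; and the leading $1$ accounts for the chain $(\varnothing)$ (the vertex $\mathbf{0}$), not for the empty chain, with the per-$i$ contribution being $(1+t)f_{\Pi(1,\ldots,i)}(t)$ before the shift rather than $(t+1)f_{\Pi(1,\ldots,i)}(t+1)$.
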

\begin{proof}
We first note that the Eulerian polynomials satisfy the identity 
\begin{equation}\label{eq-Eulerian-id}1+t\,\sum_{i=1}^m\binom{m}{i}A_i(t)=\sum_{i=0}^m\binom{m}{i}A_i(t)\,t^{m-i},\end{equation}
which is equivalent to an identity for the Eulerian numbers obtained in~\cite{ChuGraKnu}.
In particular, setting $a=j+1$, $b=m-j-1$ and $k=i$ in Theorem~1 of~\cite{ChuGraKnu}
(and using the symmetry~\eqref{eq-Eulerian-symm} for the Eulerian numbers)
gives $\sum_{i=1}^m\binom{m}{i}\,A(i,j)=\sum_{i=0}^m\binom{m}{i}\,A(i,i+j-m+1)$.
Multiplying both sides of this equation by $t^j$ and summing over $j$ then leads to~\eqref{eq-Eulerian-id}.

For $n\ge m$,~\eqref{eq-h-poly-gen} gives
\begin{equation*}h_{\cP(m,n)}(t)=h_{\cP(m,m)}(t)=1+\sum_{i=0}^{m-1}\:\sum_{j=1}^{m-i}\binom{m}{i}A_i(t)\,t^j.\end{equation*}
Extending the sum over $i$ to include $i=m$ (which is possible since the sum over $j$ is empty for $i=m$),
writing~$\sum_{j=1}^{m-i}t^j$ as $t(t^{m-i}-1)/(t-1)$ and using~\eqref{eq-Eulerian-id}, then gives
\begin{equation*}h_{\cP(m,n)}(t)=1+\frac{t}{t-1}\biggl(1+t\sum_{i=1}^m\binom{m}{i}A_i(t)-\sum_{i=0}^m\binom{m}{i}A_i(t)\biggr),\end{equation*}
which simplifies to~\eqref{eq-h-poly-1}, as required.
\end{proof}

Some remarks on Theorem~\ref{thm-h-vector-bijective} and Corollary~\ref{coroll-h-poly-1} are as follows.
\begin{remark}
As noted in \cite[Remark 3.22]{ferroni2022hilbert}, the expression in~\eqref{eq-h-poly-gen} 
for the $h$-polynomial of $\cP(m,n)$ resembles an expression in \cite[Theorem~3.21]{ferroni2022hilbert} for the Hilbert--Poincar\'e series of the augmented Chow ring of the uniform matroid $U_{n,m}$.  More specifically, if the upper bound in the sum over~$j$ in~\eqref{eq-h-poly-gen} is changed from $m-i$ to $n-i$, then this gives the expression in~\cite[Theorem 3.21]{ferroni2022hilbert} for
$U_{n,m}$.
\end{remark}

\begin{remark}
As discussed in Remark~\ref{rem-stell}, $\cP(m,n)$ with $n\ge m$
is combinatorially equivalent to the $m$-stellohedron.
The $h$-polynomial of the $m$-stellohedron is shown in~\cite[Eq.~(7)]{PRW} to
be the right hand side of~\eqref{eq-h-poly-1},
and hence Corollary~\ref{coroll-h-poly-1} can be regarded as a previously-known result.
Also, as noted in~\cite[Example~3.4]{ferroni2022hilbert}, the $h$-polynomial of the $m$-stellohedron coincides with the Hilbert--Poincar\'e series of the augmented Chow ring of the $m$-th Boolean matroid.
\end{remark}

\begin{remark}
Combining~\eqref{eq-h-poly-1} and~\eqref{eq-Eulerian-id} gives
\begin{equation}\label{eq-h-poly-2}h_{\cP(m,n)}(t)=\sum_{i=0}^m\binom{m}{i}A_i(t)\,t^{m-i},\end{equation}
for $n\ge m$.
Alternatively, this can be obtained from~\eqref{eq-h-poly-1} by applying the palindromicity property~\eqref{eq-h-palind} to $h_{\cP(m,n)}(t)$, and using the symmetry~\eqref{eq-Eulerian-symm} for Eulerian polynomials (which itself is the palindromicity property~\eqref{eq-h-palind} applied to $h_{\Pi(1,\ldots,m)}(t)$).
We also note that applying the palindromicity property~\eqref{eq-h-palind} to $h_{\cP(m,n)}(t)$,
as given by the expression in~\eqref{eq-h-poly-gen} for arbitrary~$m$ and~$n$,
and again using~\eqref{eq-Eulerian-symm}, simply returns the same expression.
\end{remark}

We end this section with some comments on a different type of formula for the $h$-polynomial of~$\cP(m,n)$ that can be obtained by applying a standard technique which has been used to give such formulae for the $h$-polynomials of the permutohedron and related polytopes.

For a nonzero vertex $\vv$ of $\cP(m,n)$ with $k$ nonzero entries, it follows from Proposition~\ref{p-vertices} 
that a permutation $\pi_\vv$ in $\mathfrak{S}_k$ is obtained (in one-line notation) by deleting each zero entry in $\vv$ and subtracting $n-k$ from each nonzero entry in~$\vv$.  Let $\mathrm{des}(\vv)$ 
and $\mathrm{des}(\vv^{-1})$ denote the numbers of descents in $\pi_\vv$ and $(\pi_\vv)^{-1}$, respectively.

Now partition the vertex set of~$\cP(m,n)$ into the sets $\{\mathbf{0}\}$, $V_1$ and $V_2$,
where $V_1$ is the set of vertices of~$\cP(m,n)$ which have~1 as an entry
(or equivalently have every element of $[n]$ as an entry), and~$V_2$ 
is the set of nonzero vertices of~$\cP(m,n)$ which do not have~1 as an entry.
Also, for any $\vv\in V_1$, let $\beta(\vv)$ be the number of~$0$'s to the right of the 
(necessarily unique)~1 in~$\vv$.

It can be shown that the $h$-polynomial of $\mathcal{P}(m,n)$ is 
\begin{equation}\label{eq-h-poly}
h_{\cP(m,n)}(t)=1+\Biggl(\sum_{\vv\in V_1}t^{1+\mathrm{des}(\vv^{-1})+\beta(\vv)}\Biggr)
+\Biggl(\sum_{\vv\in V_2}t^{1+\mathrm{des}(\vv^{-1})}\Biggr).\end{equation}
Note that~$V_2$ is in simple bijection with the set of nonzero vertices of~$\cP(m,n-1)$ (where an element of $V_2$ is mapped to a nonzero vertex of $\cP(m,n-1)$ by subtracting~1 from each nonzero entry),
that if $m=n$ and $\vv\in V_1$ then $\vv$ contains no~0's and $\beta({\vv})=0$, and that if $n>m$ then $V_1=\varnothing$.  It follows straightforwardly from these observations that 
the sum over~$V_2$ in~\eqref{eq-h-poly} can be replaced by $\sum_{\vv\in V_2}t^{1+\mathrm{des}(\vv)}$
for arbitrary $m$ and $n$, and that 
the sum over~$V_1$ in~\eqref{eq-h-poly} can be replaced by $\sum_{\vv\in V_1}t^{1+\mathrm{des}(\vv)}$
for $n\ge m$.  Hence, for $n\ge m$, we have
\begin{equation}h_{\cP(m,n)}(t)=1+\sum_{\vv}t^{1+\mathrm{des}(\vv)},\end{equation}
with the sum being over the set of all nonzero vertices of $\cP(m,n)$. This can be 
regarded as a previously-known result, since it matches an expression for 
the $h$-polynomial of the $m$-stellohedron obtained in~\cite[Eq.~(7)]{PRW}.

We now briefly sketch a proof of~\eqref{eq-h-poly} for $n\leq m$. 
Since, by Proposition~\ref{p-simple}, $\cP(m,n)$ is a simple polytope, we can use an 
orientation of its $1$-skeleton (i.e., graph), as described in \cite[Section~2.2]{PRW}.
Concretely, for a simple polytope $P\subseteq\RR^m$, and a generic vector $\boldsymbol{\lambda}\in\mathbb{R}^m$ 
which is not orthogonal to any edge of $P$,
consider an orientation of the edges $[\vv,\ww]$ of~$P$ given by
$$\vv\to\ww\quad\text{if and only if}\quad
\langle\boldsymbol{\lambda},\vv\rangle<\langle\boldsymbol{\lambda},\ww\rangle.$$
It is then known (see, for example,~\cite[Corollary~2.2]{PRW}) that 
the coefficient of $t^i$ in the $h$-polynomial $h_P(t)$ 
is the number of vertices of~$P$ with indegree $i$. For $\cP(m,n)$, we
choose such a vector $\boldsymbol{\lambda}\in\mathbb{R}^m$ with $\lambda_1>\ldots>\lambda_m>0$.
Then $\mathrm{indegree}(\mathbf{0})=0$, and the main part of the proof of~\eqref{eq-h-poly} involves showing that
\begin{equation*}
\mathrm{indegree}(\vv)=\begin{cases}
1+\des(\vv^{-1})+\beta(\vv),&\text{if }\vv\in V_1,\\
1+\des(\vv^{-1}),&\text{if }\vv\in V_2.\end{cases}\end{equation*}

\section{Volume of the partial permutohedron $\cP(m,n)$ with $n\geq m-1$} \label{sec:volppm}
Let $v(m,n)$ denote the normalized volume of $\cP(m,n)$, i.e.,
\begin{equation*}v(m,n)=\nvol(\cP(m,n)).\end{equation*}
Heuer and Striker~\cite[Figure 6]{HS} used SageMath to compute $v(m,n)$ for $m,n\leq 7$.  
In this section, we consider $v(m,n)$ for $n\geq m-1$.
In Theorem~\ref{thm:recursion for volume of P(m,n)}, we 
obtain a recursive formula for $v(m,n)$ with $n\ge m-1$, from which it follows
that $v(m,n)$ is a polynomial in $n$ of degree $m$.
In Theorem~\ref{thm:closedformulav(m,n)}, we then obtain closed formulae for~$v(m,n)$ with $n\geq m-1$. 
We end the section by providing, in~\eqref{eq-vol-drac-seq} and~\eqref{eq-vol-drac-seq2}, further expressions for~$v(m,n)$ with $n\geq m-1$, which are obtained by relating partial permutohedra to certain generalized permutohedra of~\cite{postnikov}.

\subsection{Recursive formula for $v(m,n)$ with $n\ge m-1$}
Our method for obtaining a recursive formula for $v(m,n)$ with $n\ge m-1$ will rely on the facet description of $\cP(m,n)$ given in Proposition~\ref{p-facetdesc}. Specifically, we subdivide $\cP(m,n)$ into a collection of pyramids (or cones). For each such pyramid, the base is a facet that does not contain the origin, and the apex is the origin. We then compute the volume of each pyramid, and add these using Lemma~\ref{lem:coning} to obtain the volume of~$\cP(m,n)$.
The same approach was used to obtain Theorem~\ref{thm:recursion for volume of P(m,n)} for the case $n=m-1$ in~\cite[Theorem~4.1]{AW}  and~\cite[Part~(d)]{AMM_solution}.  (See also Remark~\ref{rem-parkingfunctionpolytope} for further information.)

It is well-known that the relative volume of the regular permutohedron $\Pi(1,\dots,m)$ is $m^{m-2}$ ~\cite[Example~3.1]{stanley}. 
(See Section~\ref{sec:nonfulldimvolume} for further details on the volume of non-full-dimensional polytopes in $\mathbb{R}^m$.)

We now present our main result of the section. 
\begin{theorem}\label{thm:recursion for volume of P(m,n)}
For any $m$ and $n$ with $n\geq m-1$,  the normalized volume of $\cP(m,n)$ is given recursively by
\begin{equation}
\label{eq:recursion1}
v(m,n) = (m-1)!\,\sum_{k=1}^m k^{k-2}\,\frac{v(m-k,n-k)}{(m-k)!}\left (kn-\binom{k}{2}\right)\binom{m}{k},
\end{equation}
with initial condition $v(0,n)=1$.
Furthermore, for fixed $m$, $v(m,n)$ with $n\ge m-1$ is given by a polynomial in~$n$ of degree $m$.
\end{theorem}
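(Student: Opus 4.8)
The plan is to subdivide $\cP(m,n)$ into pyramids with common apex at the origin and apply Lemma~\ref{lem:coning}. The origin is a vertex of $\cP(m,n)$ by Proposition~\ref{p-vertices}, and among the facets listed at the start of Section~\ref{sec-faces} it lies exactly on the coordinate facets $\{x_i=0\}$; the facets avoiding the origin are the sets $F_S=\bigl\{\mathbf{x}\in\cP(m,n)\bigm|\sum_{i\in S}x_i=|S|\,n-\binom{|S|}{2}\bigr\}$, one for each nonempty $S\subseteq[m]$ with $|S|\le n-1$ or $|S|=m$ (here I use that $\binom{n+1}{2}-\binom{n+1-|S|}{2}=|S|\,n-\binom{|S|}{2}$, which remains valid when $n\ge m-1$ even in the case $|S|=m>n$). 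Then Lemma~\ref{lem:coning} gives $\vol(\cP(m,n))=\sum_S\vol(\Pyr(F_S,\mathbf{0}))$, and it remains to evaluate each summand via~\eqref{eq-pyr-vol}.

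The crux is to understand the base $F_S$. Writing $k=|S|$, I claim that in the orthogonal decomposition $\RR^m=\RR^S\oplus\RR^{[m]\setminus S}$ one has $F_S=\Pi(n-k+1,n-k+2,\ldots,n)\times\cP(m-k,n-k)$, where the first factor lives in $\RR^S$ and is a translate of the regular permutohedron $\Pi(1,\ldots,k)$, and the second is a copy of $\cP(m-k,n-k)$ in $\RR^{[m]\setminus S}$. This is precisely what Proposition~\ref{prop-facevertices} delivers for the chain $([m]\setminus S\subsetneq[m])$ (or $(\varnothing\subsetneq[m])$ when $S=[m]$): the vertices of $F_S$ are obtained by placing the $k$ largest values $n,n-1,\ldots,n-k+1$ in any order into the positions of $S$ and placing a vertex of $\cP(m-k,n-k)$ into the remaining positions. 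Hence the induced Euclidean volume of $F_S$ is the product of the induced Euclidean volumes of the two factors: by Lemma~\ref{lem:permuvolume} (and translation invariance) the first equals $k^{k-2}\sqrt{k}$, and, using $\nvol=(m-k)!\,\vol$ for the lattice polytope $\cP(m-k,n-k)$, the second equals $v(m-k,n-k)/(m-k)!$. The affine hull of $F_S$ is the hyperplane $\bigl\{\sum_{i\in S}x_i=kn-\binom{k}{2}\bigr\}$, so the distance from the origin to it is $D=\bigl(kn-\binom{k}{2}\bigr)/\sqrt{k}$. Feeding these into~\eqref{eq-pyr-vol}, grouping the admissible $S$ by their size $k$ (with $\binom{m}{k}$ subsets of each size $1\le k\le m$ when $n\ge m$), multiplying through by $m!$ to pass to the normalized volume, and cancelling the factors $\sqrt{k}$, one arrives at exactly~\eqref{eq:recursion1}; the initial condition $v(0,n)=1$ holds because $\cP(0,n)$ is a single point.

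The only remaining subtlety is the case $n=m-1$, where the subsets of size $m-1$ are not facets and so are absent from the coning sum; but the corresponding $k=m-1$ term of~\eqref{eq:recursion1} contains the factor $v(1,0)=\nvol(\{\mathbf{0}\})=0$ and hence vanishes, so~\eqref{eq:recursion1} still holds with the full sum $k=1,\ldots,m$. (In that case the $k=m$ term involves $\cP(0,-1)$, again a point with $v(0,-1)=1$, and the base $F_{[m]}$ is the translated permutohedron $\Pi(0,1,\ldots,m-1)$, whose induced volume remains $m^{m-2}\sqrt{m}$.)

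For the last assertion, I would induct on $m$ using~\eqref{eq:recursion1}, proving the sharper statement that, for $n\ge m-1$, $v(m,n)$ agrees with a polynomial in $n$ of degree $m$ with leading coefficient $m!$. The case $m=0$ is the constant $1$. For the inductive step, every argument in~\eqref{eq:recursion1} satisfies $n-k\ge(m-k)-1$, so by hypothesis $v(m-k,n-k)$ is a polynomial in $n$ of degree $m-k$; multiplying by $kn-\binom{k}{2}$, which has degree $1$ in $n$ for each $k\ge1$, gives degree $m-k+1$, which is at most $m$ and equals $m$ only when $k=1$. Thus $v(m,n)$ is a polynomial in $n$ of degree at most $m$, whose entire top-degree part comes from the $k=1$ summand $m\,n\,v(m-1,n-1)$; its leading coefficient is $m$ times the leading coefficient of $v(m-1,\cdot)$, which by the inductive hypothesis is $m\cdot(m-1)!=m!\neq0$, so the degree is exactly $m$. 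I expect the product identification of the bases $F_S$ in the second paragraph to be the conceptual heart of the proof; granting Proposition~\ref{prop-facevertices} it becomes short, so the only real care is required in treating the case $n=m-1$ uniformly.
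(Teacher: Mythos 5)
Your proposal is correct and follows essentially the same route as the paper's proof: the same coning of $\cP(m,n)$ over the origin via Lemma~\ref{lem:coning}, the same identification of each base $F_S$ as a product of a translated regular permutohedron with a smaller partial permutohedron via Proposition~\ref{prop-facevertices}, the same use of Lemma~\ref{lem:permuvolume} and the distance computation, and the same induction for the degree statement. The only (immaterial) difference is in the $n=m-1$ case, where you justify the vanishing of the $k=m-1$ term by the geometric fact $v(1,0)=\nvol(\{\mathbf{0}\})=0$, while the paper observes that the recursion itself forces $v(1,0)=0$ because the factor $kn-\binom{k}{2}$ vanishes there.
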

  
\begin{proof} 
We first consider the case $n\ge m$, 
and proceed by subdividing $\cP(m,n)$ into pyramids 
whose apex is the origin, and whose bases are the facets of $\cP(m,n)$ not containing the origin. 
Using Proposition~\ref{p-facetdesc}, there is a facet not containing the origin for each nonempty subset~$S$ of~$[m]$. Hence, for each $k\in[m]$, there are $\binom{m}{k}$ pyramids to consider with $|S|=k$.

For such a subset~$S$ of size $k$, the associated facet is, by~\eqref{facetdesc},
$$\textstyle F_S=\bigl\{\mathbf{x}\in \cP(m,n)\,\bigm|\, \sum_{i\in S}x_i=\binom{n+1}{2}-\binom{n+1-k}{2}\bigr\}=\bigl\{ x \in \cP(m,n)\,\bigm|\, \sum_{i\in S}x_i= k\,n-\binom{k}{2}\bigr\},$$
and the associated pyramid is $P_S=\text{Pyr}(F_S,\mathbf{0})$.
Since~$F_S$ lies in the hyperplane 
\begin{equation*}\mathcal{A}=\biggl\{\mathbf{x}\in\RR^m\,\bigg|\,\sum_{i\in S}x_i= k\,n-\binom{k}{2}\biggr\}
=\biggl\{\mathbf{x}\in\RR^m\,\bigg|\,\langle\mathbf{a},\mathbf{x}\rangle= k\,n-\binom{k}{2}\biggr\},
\end{equation*}
where $\mathbf{a}\in\RR^m$ has $k$ entries of~1 in positions~$S$ and entries of~0 elsewhere,
it follows that the lattice distance from~$\mathcal{A}$ to~$\mathbf{0}$ is 
$k\,n-\binom{k}{2}$.

Now observe that, using the notation of Theorem~\ref{thm-faces-chains}, $F_S=\mathcal{F}_{([m]\setminus S\,\subsetneq\,[m])}$.  Hence, using Proposition~\ref{prop-facevertices}, the vertices of $F_S$ 
are the vectors in $\RR^m$ for which:
\renewcommand{\labelenumi}{(\roman{enumi})}
\begin{enumerate}
\item The entries in positions~$S$ are $n$, $n-1$, \ldots, $n-k+1$, in any order.
\item The entries in positions~$[m]\setminus S$ are 
\begin{equation*}n-k,\:n-k-1,\:\ldots,\:n-k-l+1,\:\underbrace{0,\:\ldots,\:0\!}_{m-k-l}\,,\end{equation*}
in any order, and for any $0\le l\le m-k$.
\end{enumerate}

For a nonempty subset $T=\{t_1,\ldots,t_p\}$ of $[m]$ with $t_1<\cdots<t_p$, let $\Phi_T\colon\RR^m\to\RR^p$ be the projection given
by $(x_1,\ldots,x_m)\mapsto(x_{t_1},\ldots,x_{t_p})$.  Then it follows from condition~(i) for the vertices of $F_S$ that $\Phi_S(F_S)=\Pi(n,\dots,n-k+1)$,
and from condition~(ii) (and using Proposition~\ref{p-vertices}) that for $S\ne[m]$, $\Phi_{[m]\setminus S}(F_S)=\cP(m-k,n-k)$.
Furthermore, $\Pi(n,\dots,n-k+1)$ is simply the translation of $\Pi(1,\dots,k)$ by $(n-k,\ldots,n-k)$.
It now follows (using the unimodularity of the underlying transformations) that $F_S$ is congruent to the Cartesian product
$\Pi(1,\dots,k)\times\cP(m-k,n-k)$ for $k\in[m-1]$, and to $\Pi(1,\dots,m)$ for $k=m$ (i.e., $S=[m]$).  This process is illustrated in Example~\ref{ExCong}.

Thus, using $\relvol(\Pi(1,\dots, k)) = k^{k-2}$ and setting $v(0,n)=1$, the relative volume of the base 
of~$P_S$ is  
$$k^{k-2}\,\frac{v(m-k,n-k)}{(m-k)!},$$
and so, using~\eqref{eq-pyr-vol} for the volume of a pyramid, the volume of~$P_S$ is 
$$k^{k-2}\,\frac{v(m-k,n-k)}{(m-k)!}\cdot\left(kn-\binom{k}{2}\right)
\cdot\frac{1}{m}.$$

To obtain~\eqref{eq:recursion1}, we sum over all pyramids using Lemma~\ref{lem:coning}
(i.e., we multiply the previous expression by $\binom{m}{k}$ and sum over $k\in[m]$), 
and normalize the overall volume by multiplying by~$m!$.  

For the case~$n=m-1$, the only modification required to the previous argument is as follows.
By~\eqref{facetdesc}, there is now a facet $F_S$ not containing the origin for each nonempty subset~$S$ of~$[m]$
of size~$k$, for all $k\in[m-2]$ and for $k=m$, and so the sum over $k\in[m]$ in~\eqref{eq:recursion1} should be replaced by a sum over $k\in[m-2]\cup\{m\}$.  However, the sum over 
$k\in[m]$ can in fact be retained, since evaluating $v(m,m-1)$ for $m\ge2$ then gives a term which includes $v(1,0)$ (for $k=m-1$), but this term vanishes as evaluating $v(1,0)$ involves a sum over $k=1$ only,
for which the factor $k\,n-\binom{k}{2}$ is $1\cdot0-\binom{1}{2}=0$.

Finally, it can easily be seen, using induction on $m$, that~\eqref{eq:recursion1} and the initial condition $v(0,n)=1$ imply that, for fixed~$m$, $v(m,n)$ with $n\ge m-1$ is given by a polynomial in~$n$ of degree~$m$.
\end{proof}

\begin{example}\label{ExCong}
An example of the congruence used in the proof of Theorem~\ref{thm:recursion for volume of P(m,n)} between a facet~$F_S$ and $\Pi(1,\dots,k)\times\cP(m-k,n-k)$, for a nonempty subset~$S$ 
of $[m]$ with $k=|S|$, is as follows.  Consider the case with $m=4$, $n=6$, $k=2$ and $S=\{1,2\}$.  Then $F_{\{1,2\}}=\{\mathbf{x}\in\cP(4,6)\mid x_1+x_2=11\}$, and 
for a vertex~$\mathbf{v}$ of this facet,~$v_1$ and~$v_2$ are~$6$ and~$5$ in either order (due to condition~(i)), while~$v_3$ and~$v_4$ are~$4$ and~$3$ in either order,~$4$ and~$0$ in either order, or both~$0$
(due to condition~(ii)).  Hence, the vertices of $F_{\{1,2\}}$ are $(6,5,4,3)$, $(6,5,3,4)$, $(6,5,4,0)$, $(6,5,0,4)$, $(6,5,0,0)$, $(5,6,4,3)$, $(5,6,3,4)$, $(5,6,4,0)$, $(5,6,0,4)$ and $(5,6,0,0)$.
Also, $\Phi_{\{1,2\}}(F_{\{1,2\}})=\Pi(6,5)$, which is the line segment with vertices $(6,5)$ and $(5,6)$, and $\Phi_{\{3,4\}}(F_{\{1,2\}})=\cP(2,4)$, which is the pentagon with vertices
 $(4,3)$, $(3,4)$, $(4,0)$, $(0,4)$ and $(0,0)$.  It follows that $F_{\{1,2\}}=\Pi(6,5)\times\cP(2,4)$, which (by considering $\Pi(6,5)$ as $\Pi(1,2)$ translated by $(4,4)$) is 
 congruent to $\Pi(1,2)\times\cP(2,4)$.
\end{example}

\begin{remark}\label{rem-arg}
If we attempt to modify the proof of Theorem~\ref{thm:recursion for volume of P(m,n)} 
for the case $n<m-1$, then by~\eqref{facetdesc}, there is now a facet $F_S$ not containing the origin for each nonempty subset~$S$ of~$[m]$ of size~$k$, for all $k\in[n-1]$ and for $k=m$.
Hence, the sum over $k\in[m]$ in~\eqref{eq:recursion1} should be replaced by a sum over $k\in[n-1]\cup\{m\}$. However, certain further modifications are required, since~$F_S$ for $k=m$ 
is $F_{[m]}=\mathcal{F}_{(\varnothing\,\subsetneq\,[m])}=\Pi(0,\ldots,0,1,\ldots,n)$
(using case~(4) in Proposition~\ref{prop-facevertices}), where the number of $0$'s is $m-n\ge2$, 
and we do not have a simple closed formula for the relative volume of $\Pi(0,\ldots,0,1,\ldots,n)$.
Also, the lattice distance from the origin to the hyperplane containing $F_{[m]}$ is now $\binom{n+1}{2}$.
\end{remark}
 
\begin{example}\label{ex:formulas}
For $1\le m\le 7$ and $n\ge m-1$, Theorem~\ref{thm:recursion for volume of P(m,n)} gives the following expressions for $v(m,n)$:
\begin{equation*}{\small
\begin{array}{@{}l@{\;\,}r@{\;\,}r@{\;\,}r@{\;\,}r@{\;\,}r@{\;\,}r@{\;\,}r@{\;\,}r@{}}
v(1,n)=&&n,&&&&&&\\
v(2,n)=&-1&&+2n^2,&&&&&\\
v(3,n)=&-6 &-9n&&+6n^3,&&&&\\
v(4,n)=&-54 &-96n&-72n^2&&+24n^4,&&&\\
v(5,n)=&-840&-1350n &-1200n^2&-600n^3&&+120n^5,&\\
v(6,n)=&-21150 &-30240n&-24300n^2&-14400n^3&-5400n^4&&+720n^6,\\
v(7,n)=&- 782460&- 1036350n&- 740880n^2& - 396900n^3& - 176400n^4& - 52920n^5& &+5040n^7.
\end{array}}\end{equation*}
Note that the expression $v(2,n)=2n^2-1$ was obtained in~\cite[Theorem~5.29]{HS}.
\end{example}

\subsection{Closed formulae for $v(m,n)$ with $n\ge m-1$}\label{sec:closedformv(m,n)}
We now obtain closed formulae for $v(m,n)$ with $n\ge m-1$,
which thereby provide explicit expressions for the coefficients of $v(m,n)$ as a polynomial in~$n$.
Our approach will rely on the recursion of Theorem~\ref{thm:recursion for volume of P(m,n)}, together with methods used for recent computations of the volume of~$\cP(m,m-1)$ in~\cite[Proposition~4.2]{AW} and~\cite[Part~(d)]{AMM_solution}. (See also Remark~\ref{rem-parkingfunctionpolytope} for further information.)

In this section, for any power series $f(z)$, we use the notation $[z^i]f(z)$ to denote the coefficient of~$z^i$ in the expansion of $f(z)$.
We also use the double factorial, defined as $(2i-3)!!=-\prod_{j=1}^i(2j-3)$, for any nonnegative integer $i$.

\begin{theorem}\label{thm:closedformulav(m,n)}
For any $m$ and $n$ with $n\geq m-1$, the normalized volume of $\cP(m,n)$ is given by
\begin{align}\label{eq:vmn}
v(m,n)&=-\frac{m!}{2^m}\sum_{0\le i\le j\le m}\binom{m}{j}\,\binom{j}{i}\,(2i-3)!!\,(2n)^{m-j},\\
\label{eq:vmn2}v(m,n)&=-\frac{m!}{2^m}\,\sum_{i=0}^m\,\binom{m}{i}(2i-3)!!\,(2n+1)^{m-i}\\
\intertext{and}
\label{eq:vmncoeff}v(m,n)&=(m!)^2\,[z^m]\,\sqrt{1-z}\,e^{(n+1/2)z}.
\end{align}
\end{theorem}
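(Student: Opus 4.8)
The plan is to solve the recursion of Theorem~\ref{thm:recursion for volume of P(m,n)} by generating functions, establishing~\eqref{eq:vmncoeff} first and then deducing~\eqref{eq:vmn2} and~\eqref{eq:vmn} from it by coefficient extraction and binomial rearrangement. First I would normalise: setting $a_m(n)=v(m,n)/(m!)^2$ and using $(m-1)!\binom{m}{k}=(m-1)!\,m!/(k!\,(m-k)!)$, the recursion~\eqref{eq:recursion1} becomes
\[
m\,a_m(n)\;=\;\sum_{k=1}^{m}\frac{k^{k-2}}{k!}\Bigl(kn-\tbinom{k}{2}\Bigr)\,a_{m-k}(n-k),\qquad a_0(n)=1.
\]
The crucial observation is that this recursion relates $a_m(n)$ only to values $a_{m'}(n')$ with $n'-m'=n-m$. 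So I fix $c=n-m$ (with $c\ge-1$, which is exactly the hypothesis $n\ge m-1$) and put $p_m=a_m(m+c)$, so that $a_{m-k}(n-k)=p_{m-k}$. Using the identity $\tfrac{k^{k-2}}{k!}\bigl(kn-\binom{k}{2}\bigr)=\bigl(n+\tfrac12\bigr)\tfrac{k^{k-1}}{k!}-\tfrac12\tfrac{k^{k}}{k!}$ and substituting $n=m+c$ gives
\[
m\,p_m\;=\;\Bigl(m+c+\tfrac12\Bigr)\sum_{k=1}^{m}\frac{k^{k-1}}{k!}\,p_{m-k}\;-\;\tfrac12\sum_{k=1}^{m}\frac{k^{k}}{k!}\,p_{m-k}.
\]

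Next I pass to the ordinary generating function $P(z)=\sum_{m\ge0}p_m z^m$ and invoke the tree function $T=T(z)$ defined by $T=ze^{T}$, together with the classical identities $T=\sum_{k\ge1}\tfrac{k^{k-1}}{k!}z^k$, $\tfrac{T}{1-T}=\sum_{k\ge1}\tfrac{k^{k}}{k!}z^k$ and $zT'=\tfrac{T}{1-T}$. The two convolution sums above become $TP$ and $\tfrac{T}{1-T}P$, while the factor $m$ inside $m+c+\tfrac12$ contributes $z\tfrac{d}{dz}(TP)$; hence the recursion becomes $zP'=z(TP)'+(c+\tfrac12)TP-\tfrac12\tfrac{T}{1-T}P$, which after using $zT'=\tfrac{T}{1-T}$ simplifies to $P'/P=\tfrac{T'}{2(1-T)}+(c+\tfrac12)T'$. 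Integrating, with $P(0)=p_0=1$, yields $P(z)=e^{(c+1/2)T(z)}\big/\sqrt{1-T(z)}$. Writing this as $P(z)=F(T(z))/(1-T(z))$ with $F(t)=\sqrt{1-t}\,e^{(c+1/2)t}$, the Lagrange--Bürmann formula in the form $[z^m]\tfrac{F(T)}{1-z\phi'(T)}=[t^m]F(t)\phi(t)^m$ with $\phi(t)=e^t$ (so that $z\phi'(T)=ze^{T}=T$) gives $p_m=[t^m]\sqrt{1-t}\,e^{(m+c+1/2)t}$. Restoring $n=m+c$ and multiplying by $(m!)^2$ produces~\eqref{eq:vmncoeff}.

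Finally, from~\eqref{eq:vmncoeff} I would extract coefficients: since $[z^i]\sqrt{1-z}=-\tfrac{(2i-3)!!}{2^i\,i!}$ (immediate from the binomial series and the definition of the double factorial), taking $[z^m]$ in $\sqrt{1-z}\,e^{(n+1/2)z}$ and expanding $(n+\tfrac12)^{m-i}=(2n+1)^{m-i}/2^{m-i}$ gives $v(m,n)=-\tfrac{m!}{2^m}\sum_{i=0}^{m}\binom{m}{i}(2i-3)!!\,(2n+1)^{m-i}$, which is~\eqref{eq:vmn2}; then expanding $(2n+1)^{m-i}=\sum_{t}\binom{m-i}{t}(2n)^{t}$, reindexing by $j=m-t$ and using $\binom{m}{i}\binom{m-i}{j-i}=\binom{m}{j}\binom{j}{i}$ yields~\eqref{eq:vmn}. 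I expect the main obstacle to be the structural insight that this superficially two-parameter recursion collapses to a one-variable problem along the diagonals $n-m=\mathrm{const}$, and then correctly handling the ``extra'' $m$ in $m+c+\tfrac12$, which produces the derivative term $z(TP)'$ rather than a mere multiple of $TP$; the remaining ingredients (tree-function identities, Lagrange inversion, binomial juggling) are routine. These manipulations parallel those used for $\cP(m,m-1)$ in~\cite[Section~4]{AW} and~\cite[12191(d)]{AMM_solution}.
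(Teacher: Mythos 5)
Your proposal is correct and follows essentially the same route as the paper's proof: the recursion is solved along the diagonals $n-m=\mathrm{const}$ via the generating function $\sum_m v(m,m+c)z^m/(m!)^2$ (the paper's $V_s(z)$), the tree function $T=ze^T$ turns the convolution into a first-order linear ODE with solution $e^{(c+1/2)T}/\sqrt{1-T}$, and coefficient extraction — your Lagrange--Bürmann step is exactly the identity $[z^k]f(T(z))=[z^k]f(z)(1-z)e^{kz}$ the paper invokes — yields~\eqref{eq:vmncoeff}, from which~\eqref{eq:vmn2} and~\eqref{eq:vmn} follow by the same binomial expansions. The only differences are cosmetic (normalizing by $(m!)^2$ up front, and deriving the three formulas in the opposite order).
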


\begin{proof}
The right hand sides of~\eqref{eq:vmn} and~\eqref{eq:vmn2} are equal, since $\binom{m}{j}\binom{j}{i}=\binom{m}{i}\binom{m-i}{m-j}$
and $\sum_{j=i}^m\binom{m-i}{m-j}\times\allowbreak(2n)^{m-j}=\sum_{j=0}^{m-i}\binom{m-i}{j}(2n)^j=(2n+1)^{m-i}$.
The right hand sides of~\eqref{eq:vmn2} and~\eqref{eq:vmncoeff} are equal, since
$[z^i]\,\sqrt{1-z}=-(2i-3)!!/(2^i\,i!)$ and $[z^i]\,e^{(n+1/2)z}=(2n+1)^i/(2^i\,i!)$.

In the remainder of this proof, we confirm the validity of~\eqref{eq:vmncoeff}.  This will involve using the well-studied function
\begin{equation}\label{Tdef}T(z)=\sum_{i=1}^\infty i^{i-1}\frac{z^i}{i!}.\end{equation}
For information on $T(z)$, see for example~\cite[pp.~331, 332 and~338]{CorGonHarJefKnu96} or~\cite[pp.~23--28 and~43]{StanleyEC2}.
The properties of $T(z)$ which will be used here are
\begin{equation}\label{Tdiff}T'(z)=\frac{T(z)}{z(1-T(z))},\end{equation}
where $T'(z)$ is the derivative of $T(z)$
(see~\cite[Eq.~(3.2)]{CorGonHarJefKnu96}, with $T(z)=-W(-z)$ from~\cite[p.~331]{CorGonHarJefKnu96}), and 
\begin{equation}\label{Tprop2}
[z^k]\,f\bigl(T(z)\bigr)=[z^k]\,f(z)\,(1-z)\,e^{kz},
\end{equation}
for any power series $f(z)$ and nonnegative integer $k$
(see~\cite[Eq.~(2.38)]{CorGonHarJefKnu96}).

We now denote the unnormalized volume of $\cP(m,n)$ as $V(m,n)$, i.e.,
\begin{equation}\label{Vunnorm}V(m,n)=v(m,n)/m!,\end{equation}
set $V(0,n)=1$ (for consistency with the condition $v(0,n)=1$ in Theorem~\ref{thm:recursion for volume of P(m,n)}),
and introduce the exponential generating function
\begin{equation}\label{Vsdef}V_s(z)=\sum_{i=0}^\infty V(i,i+s)\,\frac{z^i}{i!},\end{equation}
for any integer $s\ge-1$. 

The recursion~\eqref{eq:recursion1} can be written as
\begin{equation*}V(m,n)=\sum_{k=1}^m\binom{m}{k}\,k^{k-1}\,V(m-k,n-k)\,\frac{2n-k+1}{2m},\end{equation*}
which gives
\begin{equation*}V_s(z)=1+\sum_{i=1}^\infty\sum_{k=1}^i\binom{i}{k}\,k^{k-1}\,V(i-k,i-k+s)\,\frac{2i-k+2s+1}{2i}\,\frac{z^i}{i!}.\end{equation*}
Differentiating then gives
\begin{equation*}V'_s(z)=\sum_{i=1}^\infty\sum_{k=1}^i\binom{i}{k}\,k^{k-1}\,V(i-k,i-k+s)\,\frac{2i-k+2s+1}{2}\,\frac{z^{i-1}}{i!}.\end{equation*}
Separating $(2i-k+2s+1)/2$ as $(i-k)+(k/2)+(s+1/2)$, and using~\eqref{Tdef} and~\eqref{Vsdef}, we obtain 
\begin{equation*}V'_s(z)=T(z)\,V'_s(z)+T'(z)\,V_s(z)/2+(s+1/2)\,T(z)\,V_s(z)/z,\end{equation*}
so that 
\begin{equation*}V'_s(z)=\biggl(\frac{T'(z)}{2(1-T(z))}+\frac{(s+1/2)\,T(z)}{z(1-T(z))}\biggr)V_s(z),\end{equation*}
 which, using~\eqref{Tdiff}, becomes
\begin{equation*}V'_s(z)=\Bigl(\frac{1}{2(1-T(z))}+s+1/2\Bigr)\,T'(z)\,V_s(z).\end{equation*}
Solving this first order homogeneous linear differential equation for $V_s(z)$, with the initial condition $V_s(0)=1$
(and noting that $T(0)=0$, from~\eqref{Tdef}), we obtain  
\begin{align*}V_s(z)&=e^{-\log(1-T(z))/2\,+\,(s+1/2)T(z)}\\
&=\frac{e^{(s+1/2)\,T(z)}}{\sqrt{1-T(z)}}.\end{align*}
Hence, using~\eqref{Vunnorm} and~\eqref{Vsdef}, and setting $s=n-m$, gives
\begin{equation*}\label{eq-gen-fun}v(m,n)=(m!)^2\,[z^m]\,\frac{e^{(n-m+1/2)\,T(z)}}{\sqrt{1-T(z)}},\end{equation*}
for $n\ge m-1$.  

We now obtain~\eqref{eq:vmncoeff} by applying~\eqref{Tprop2} with $f(z)=e^{(n-m+1/2)\,z}/\sqrt{1-z}$, thereby completing the proof.
\end{proof}

Some remarks on Theorem~\ref{thm:closedformulav(m,n)} are as follows.

\begin{remark}
A conjectural generalization of Theorem~\ref{thm:closedformulav(m,n)},
which provides an explicit expression for the Ehrhart polynomial of~$\cP(m,n)$ with $n\ge m-1$, will be given in Conjecture~\ref{conj-Ehr-expl}.
\end{remark}

\begin{remark}\label{rem-vol-rec}
The function $g(z)=\sqrt{1-z}\,e^{(n+1/2)z}$,
which appears in~\eqref{eq:vmncoeff}, satisfies
$(z-1)g'(z) =((n+1/2)z-n)\,g(z)$.  Using this and~\eqref{eq:vmncoeff}, it follows
that the unnormalized volume~\eqref{Vunnorm} of $\cP(m,n)$ with $n\ge m-1$
satisfies the recurrence relation
\begin{equation}\label{eq-vol-rec}V(m,n)=(m+n-1)\,V(m-1,n)-(m-1)(n+1/2)\,V(m-2,n).\end{equation}
By setting~$V(0,n)=1$ and setting $V(-1,n)$ arbitrarily (since $V(-1,n)$ has coefficient zero in the 
$m=1$ case of~\eqref{eq-vol-rec}), it follows that~\eqref{eq-vol-rec} with $m\ge1$ is equivalent (using~\eqref{Vunnorm}) to each of the equations in Theorem~\ref{thm:closedformulav(m,n)}.
It would be interesting to find a geometric proof of~\eqref{eq-vol-rec},
since this would provide a more direct combinatorial proof of
Theorem~\ref{thm:closedformulav(m,n)}.
\end{remark}

\begin{remark}\label{rem-winpolytope} 
An alternative perspective on $\cP(m,m-1)$, and its volume, is as follows.  For a graph~$G$ with vertex set~$[m]$, a \emph{partial orientation}~$\mathcal{O}$ of~$G$ is an assignment of a direction to some (which could be all or none) of the edges of~$G$,
and the \emph{win vector} of~$\mathcal{O}$ is the indegree sequence of~$\mathcal{O}$ (i.e., for each $i\in[m]$, the $i$th entry of the win vector is the number of edges incident to~$i$ which are directed towards~$i$ by~$\mathcal{O}$). The \emph{win vector polytope}~$W(G)$ of~$G$, as defined by Bartels, Mount and Welsh~\cite{bartels1997polytope}, is the convex hull in~$\RR^m$ of the win vectors of all partial orientations of~$G$.
It can be shown that, for any $m$, the win vector polytope $W(K_m)$ of the complete graph~$K_m$ is exactly~$\cP(m,m-1)$. (For example, this can be done using a characterization of the vertices of win vector polytopes given in~\cite[Proposition~3.4]{bartels1997polytope}.  
However, note that the characterization of the win vectors of~$K_m$ given in~\cite[Example~3]{bartels1997polytope} appears to contain some errors.) Expressions for the volume and Ehrhart polynomial of~$W(G)$ for arbitrary~$G$ are obtained by Backman in~\cite[Theorem~4.5 and Corollary~4.6]{backman2018partial}, as sums over cycle-path minimal partial orientations
of $3G$ (where this denotes the graph obtained from~$G$ by replacing each edge by three parallel copies).  Hence, taking $G=K_m$ in these expressions provides alternative means for computing the volume and Ehrhart polynomial of~$\cP(m,m-1)$.
\end{remark}

\begin{remark}\label{rem-parkingfunctionpolytope}
Yet another perspective on $\cP(m,m-1)$, and its volume, is as follows. A \emph{parking function} of length~$m$ is an $m$-vector of positive integers whose nondecreasing rearrangement $r_1\le r_2\le\ldots\le r_m$ satisfies $r_i\le i$ for each $i\in[m]$.
The \emph{parking function polytope}~$P_m$, as defined by Stanley in~\cite{AMM_problem}, is the convex hull in~$\RR^m$ of all parking functions of length~$m$.
Stanley asked for enumerations of the vertices and facets of $P_m$~\cite[Parts~(a) and~(b)]{AMM_problem}, and
explicit characterizations of these vertices and facets were subsequently given in~\cite[Section~1]{AW} and~\cite[Parts~(a) and~(b)]{AMM_solution}. By comparing these characterizations with those
of Propositions~\ref{p-vertices} and~\ref{p-facetdesc} for $n=m-1$, it can immediately be seen that~$P_m$ (for $m\ge2$) is simply~$\cP(m,m-1)$ translated by $(1,\ldots,1)$.  Stanley also asked for the volume of~$P_m$~\cite[Part~(d)]{AMM_problem}.  Recursive formulae for
this volume, which match the recursive formula of~\eqref{eq:recursion1} for $n=m-1$, are obtained in~\cite[Theorem~4.1]{AW} and~\cite[Part~(d)]{AMM_solution}, by following the same approach as that used to prove Theorem~\ref{thm:recursion for volume of P(m,n)}. Using this recursion, a more explicit formula for the volume of~$P_m$ is obtained in~\cite[Proposition~4.2]{AW}, and a completely explicit formula for this volume is obtained in~\cite[Part~(d), first equation]{AMM_solution}.  The approach used in the proof of Theorem~\ref{thm:closedformulav(m,n)} is closely related to those used in~\cite[Proposition~4.2]{AW} and~\cite[Part~(d)]{AMM_solution}.  However, note that the explicit formula for $v(m,m-1)$
provided by~\cite[Part~(d), first equation]{AMM_solution} has a different form from that of the $n=m-1$ cases
of~\eqref{eq:vmn} or~\eqref{eq:vmn2} in Theorem~\ref{thm:closedformulav(m,n)}.  Also, two further forms 
of explicit formulae for $v(m,m-1)$, based on results of~\cite{Shevelev},
are given in~\cite[Theorem~2.9~(v) and (vi)]{HanadaLentferVindasMelendez}.
\end{remark}

\begin{remark}\label{rem-parkingfunctionpolytopegen}
The parking function polytope~$P_m$ discussed in Remark~\ref{rem-parkingfunctionpolytope} was recently generalized in~\cite{HanadaLentferVindasMelendez} to a wider class of  related polytopes which includes~$\cP(m,n)$ (up to a simple translation) for any $n\ge m-1$.  More specifically, for positive integers~$a$,~$b$ and~$m$, let an \emph{$(a,b)$-parking function} 
of length~$m$ be an $m$-vector of positive integers whose nondecreasing rearrangement $r_1\le r_2\le\ldots\le r_m$ satisfies $r_i\le a+(i-1)b$ for each $i\in[m]$, and let the polytope~$\mathcal{X}_m(a,b)$ be
the convex hull in~$\RR^m$ of all $(a,b)$-parking functions of length~$m$.  Then $P_m=\mathcal{X}_m(1,1)$, and it follows from~\cite[Proposition~3.16]{HanadaLentferVindasMelendez} that, for any $n\ge m-1$, $\mathcal{X}_m(n-m+2,1)$ is simply~$\cP(m,n)$ translated by $(1,\ldots,1)$.  Theorem~\ref{thm:closedformulav(m,n)}
appeared as a conjecture in the first arXiv version of this paper, and an alternative proof of this conjecture was recently obtained independently in~\cite[Corollary~3.29]{HanadaLentferVindasMelendez}, as a corollary of a result for the volume of~$\mathcal{X}_m(a,b)$ with any~$a$ and~$b$~\cite[Theorem~1.1]{HanadaLentferVindasMelendez}.  
Some of the methods used to obtain~\cite[Theorem~1.1]{HanadaLentferVindasMelendez} are also related to methods used in~\cite[Section~4]{AW} and~\cite[Part~(d)]{AMM_solution}.
\end{remark}

\subsection{Partial permutohedra as generalized permutohedra}
\label{sec:gp}
We now relate partial permutohedra to cases of generalized permutohedra, which enables us to use results of Postnikov~\cite{postnikov} to obtain further expressions for~$v(m,n)$ with $n\geq m-1$.  

A \emph{generalized permutohedron}~\cite{postnikov} in~$\RR^m$ is a polytope in which every edge is parallel to $\mathbf{e}_i-\mathbf{e}_j$, for some $i,j\in[m]$.
Note that a certain relation between $\cP(m,m-1)$ (in the context of the parking function polytope~$P_m$ of Remark~\ref{rem-parkingfunctionpolytope}) and 
generalized permutohedra is identified in~\cite[Section~5]{AW}.
Note also that, up to translation, generalized permutohedra are polymatroid base polytopes.  See, for example,~\cite{black2022flag} for this perspective, and associated connections to partial permutohedra. 

Let $s_{m,n}=n+(n-1)+\dots+\max(n-m+1,0)=\binom{n+1}{2}-\binom{n+1-m}{2}$,
where $\binom{n+1-m}{2}$ is taken to be~0 for $n\le m$, and 
define the affine map
\begin{equation*}
\textstyle\phi_{m,n}\colon\RR^m\to\RR^{m+1},\quad(x_1\dots,x_m)\mapsto(x_1,\dots,x_m,s_{m,n}-\sum_{i=1}^mx_i),
\end{equation*}
and the polytope
\begin{equation}\label{liftedP}
\widetilde{\cP}(m,n)=\phi_{m,n}(\cP(m,n)).
\end{equation}
Then $\widetilde{\cP}(m,n)$ is affinely isomorphic to $\cP(m,n)$ and can be seen to be a  generalized permutohedron by using Theorem~\ref{thm-faces-chains} and Proposition~\ref{prop-facevertices} to characterize the edges of~$\cP(m,n)$.  (See also
Remark~\ref{rem-edge} for a characterization of these edges.)
Furthermore, $\cP(m,n)$ and $\widetilde{\cP}(m,n)$ have the same Ehrhart polynomial, and thus the same relative volume.  

We now focus on $\cP(m,n)$ and $\widetilde{\cP}(m,n)$ for the case $n\geq m-1$.
It can be shown that $\cP(m,n)$ with $n\geq m-1$ has the Minkowski sum decomposition
\begin{equation}\label{eq-Minkowski1}
\cP(m,n)=(n-m+1)\underbrace{\sum_{i=1}^m\textrm{ConvexHull}(\{\mathbf{e}_0,\mathbf{e}_i\})}_{=[0,1]^m}+\sum_{1\le i<j\le m}\textrm{ConvexHull}(\{\mathbf{e}_0,\mathbf{e}_i,\mathbf{e}_j\}).
\end{equation}
Note that for $n\geq m-1$, we have $\cP(m,n)=\widehat{\Pi}(n,n-1,\dots,n-m+1)$, 
using notation which will be introduced in Definition~\ref{def:Pi} and a result which
will be given in~\eqref{eq-anti-block}, and that the associated permutohedron 
$\Pi(n,n-1,\dots,n-m+1)$ has the well-known Minkowski sum decomposition
\begin{equation}
\Pi(n,n-1,\dots,n-m+1)=(n-m+1)\sum_{i=1}^m\{\mathbf{e}_i\}+\sum_{1\le i<j\le m}\textrm{ConvexHull}(\{\mathbf{e}_i,\mathbf{e}_j\}).\end{equation}
It follows from~\eqref{liftedP} and~\eqref{eq-Minkowski1} that $\widetilde{\cP}(m,n)$ with $n\geq m-1$ has the Minkowski sum decomposition
\begin{equation}\label{eq-Minkowski2}
\small{\widetilde{\cP}(m,n)
=(n-m+1)\underbrace{\sum_{i=1}^m\textrm{ConvexHull}(\{\mathbf{e}_i,\mathbf{e}_{m+1}\})}_{=\{(x_1,\ldots,x_m,m-\sum_{i=1}^mx_i)\,\mid\,\mathbf{x}\in[0,1]^m\}}+\sum_{1\le i<j\le m} \textrm{ConvexHull}(\{\mathbf{e}_i,\mathbf{e}_j,\mathbf{e}_{m+1}\}),}
\end{equation}
where $\mathbf{e}_i$ now denotes the $i$th standard unit vector in $\RR^{m+1}$.
Hence,~$\widetilde{\cP}(m,n)$ is a so-called type-$\cY$ generalized permutohedron,
i.e., it has the form $P^y_n(\{y_I\})$, as defined in~\cite[p.~1042]{postnikov}.
Various results for volumes and Ehrhart polynomials of type-$\cY$ generalized permutohedra are obtained in~\cite{postnikov},
and we now apply some of these to $\widetilde{\cP}(m,n)$.

By applying~\cite[Theorem~9.3]{postnikov} to~\eqref{eq-Minkowski2}, and simplifying,
it follows that the normalized volume of~$\cP(m,n)$ with $n\geq m-1$
is given by
\begin{equation}\label{eq-vol-drac-seq}
v(m,n)=\sum_{(a_1,\ldots,a_{m(m+1)/2})}\binom{m}{a_1,\ldots,a_{m(m+1)/2}}\,(n-m+1)^{a_1+\ldots+a_m},\end{equation}
where the sum is over all so-called draconian sequences for this case, with these defined as follows.  Let $I_i=\{i\}$ for $1\le i\le m$, and let $I_{m+1},\ldots,I_{m(m+1)/2}$ be the sets
$\{i,j\}$ for $1\le i<j\le m$ (in any fixed order). Then the draconian sequences
in~\eqref{eq-vol-drac-seq} are those sequences $(a_1,\ldots,a_{m(m+1)/2})$ of nonnegative integers such that $\sum_{k=1}^{m(m+1)/2}a_k=m$ and $\sum_{k\in S}a_k\le|\cup_{k\in S}I_k|$, for all $\varnothing\subsetneq S\subsetneq[m(m+1)/2]$.
Note that taking $S$ to be a singleton in $\sum_{k\in S}a_k\le|\cup_{k\in S}I_k|$ gives 
$a_i\le 1$ for $i=1,\ldots,m$, and $a_i\le2$ for $i=m+1,\ldots,m(m+1)/2$.

It follows from~\eqref{eq-vol-drac-seq} that, for any fixed $m$,
$v(m,n)$ with $n\ge m-1$ is given by a polynomial in $n-m+1$ with positive integer
coefficients and (since $m$ $1$'s followed by $m(m-1)/2$ $0$'s is a draconian
sequence) degree $m$.

\begin{example}\label{ex-vol-drac}
For $m=2$, we have $I_1=\{1\}$, $I_2=\{2\}$ and $I_3=\{1,2\}$, and the draconian sequences in~\eqref{eq-vol-drac-seq} are $(0,0,2)$, $(0,1,1)$, $(1,0,1)$ and $(1,1,0)$, 
which gives $v(2,n)=1+2(n-1)+2(n-1)+2(n-1)^2$. 
\end{example}

\begin{example}\label{ex:formulas_change_variable}
For $1\le m\le6$ and $n\ge m-1$, the expressions for $v(m,n)$
in terms of $N=n-m+1$ are:
\begin{equation*}{\small
\begin{array}{@{}l@{\;\,}r@{\;\,}r@{\;\,}r@{\;\,}r@{\;\,}r@{\;\,}r@{\;\,}r@{\;\,}r@{}}
v(1,n)=&&N,&&&&&&\\
v(2,n)=&1&+4N&+2N^2,&&&&&\\
v(3,n)=&24 &+63N&+36N^2&+6N^3,&&&&\\
v(4,n)=&954 &+2064N&+1224N^2&+288N^3&+24N^4,&&&\\
v(5,n)=&59040&+113850N &+68400N^2&+18600N^3&+2400N^4&+120N^5,&\\
v(6,n)=&5295150&+9446760N&+5699700N^2&+1677600N^3&+264600N^4&+21600N^5&+720N^6.
\end{array}}\end{equation*}
\end{example}

By applying~\cite[Theorem~10.1]{postnikov} to~\eqref{eq-Minkowski2}, 
and simplifying, it follows that
the normalized volume of~$\cP(m,n)$ with $n\geq m-1$
is also given by
\begin{align}
\notag&v(m,n)=\\
&\label{eq-vol-drac-seq2}\sum_{\sigma\in\mathfrak{S}_{m+1}}\!\!\!
\frac{\displaystyle
\biggl(\,\sum_{i=1}^{\sigma^{-1}(m+1)-1}\!\!(n-i+1)\lambda_{\sigma(i)}+
{\textstyle\frac{1}{2}}(m-\sigma^{-1}(m+1)+1)(2n-m-\sigma^{-1}(m+1)+2)\lambda_{m+1}\!\biggr)^m}{
\prod_{i=1}^m(\lambda_{\sigma(i)}-\lambda_{\sigma(i+1)})},
\end{align}
for any distinct parameters $\lambda_1,\ldots,\lambda_{m+1}$.

\begin{remark}
It is shown in~\cite[Theorem~2.9~(vii)]{HanadaLentferVindasMelendez},
using a result of~\cite{Shevelev}, that $v(m,m-1)$ is the number of 
$m\times m$ $(0,1)$-matrices for which there are exactly two~$1$'s in each row, and the permanent is positive.  
Since the permanent of an $m\times m$ $(0,1)$-matrix~$A$ is the number of perfect
matchings of the bipartite graph whose biadjacency matrix is~$A$, it follows
that~$v(m,m-1)$ is the number of bipartite graphs with $m$ vertices in each part,
such that each vertex in one part has degree~2, and there exists a perfect matching.
This result can also be obtained from~\eqref{eq-vol-drac-seq} as follows, which may be relevant to~\cite[Conjecture~5.2 and Problem~5.4]{HanadaLentferVindasMelendez}.
For $n=m-1$, the summand in~\eqref{eq-vol-drac-seq} is zero unless the draconian sequence $(a_1,\ldots,a_{m(m+1)/2})$ has $a_1=\dots=a_m=0$.  Hence, in this case,~\eqref{eq-vol-drac-seq} simplifies to
\begin{equation}\label{eq-vol-drac-seq3}v(m,m-1)=\sum_{(b_{12},b_{13}\ldots,b_{m-1,m})}\binom{m}{b_{12},\,b_{13},\,\ldots,\,b_{m-1,m}},\end{equation}
where the sum is over all
sequences $(b_{ij})_{1\le i<j\le m}$ of nonnegative integers such that $\sum_{1\le i<j\le m}b_{ij}=m$ and  $\sum_{(i,j)\in S}b_{ij}\le|\cup_{(i,j)\in S}\{i,j\}|$, for all $\varnothing\subsetneq S\subsetneq\{(i,j)\mid 1\le i\le j\le m\}$.
The desired interpretation of $v(m,m-1)$ can then be obtained using Hall's marriage theorem for the existence of a perfect matching in a bipartite graph.  
In particular,
a sequence $(b_{12},b_{13}\ldots,b_{m-1,m})$ in~\eqref{eq-vol-drac-seq3} is regarded as corresponding to $b_{ij}$ copies of $\{i,j\}$, for each $1\le i<j\le m$.  These~$m$ two-element sets are then permuted in
all $\binom{m}{b_{12},\,b_{13},\,\ldots,\,b_{m-1,m}}$ ways, and the $k$th set in such a permutation is taken to be the set of neighbours of vertex~$k'$ in a bipartite graph with vertices $1',\ldots,m',1,\ldots,m$, and which has a perfect matching 
due to the conditions on $(b_{12},b_{13}\ldots,b_{m-1,m})$.
\end{remark}

\section{The partial permutohedron $\cP(m,n)$ with $n\le4$}\label{sec:volppn}
We now shift our focus to $\cP(m,n)$ with arbitrary $m$ and fixed $n\le 4$.
Heuer and Striker conjectured that the normalized volume of $\cP(m,2)$ is $v(m,2)=3^m-m$ \cite[Conjecture  5.30]{HS}. In this section, we compute
explicit expressions for the Ehrhart polynomials of $\mathcal{P}(m,2)$ and $\mathcal{P}(m,3)$, and thereby obtain a proof of the conjecture for $v(m,2)$
and an expression for $v(m,3)$.  We then also obtain an explicit expression for 
$v(m,4)$.

\subsection{Ehrhart polynomials}
We begin by recalling some basic facts about Ehrhart polynomials.
For a lattice polytope $\mathcal{P}\subseteq \mathbb{R}^m$, the function $|t\,\mathcal{P}\cap\mathbb{Z}^m|$ 
of a positive integer variable~$t$ (i.e., the number of integer points in the 
$t$-th dilate $t\,\mathcal{P}=\{t\,\mathbf{x}\mid\mathbf{x}\in\cP\}$ of~$\cP$) is known to 
agree with a polynomial $\ehr(\mathcal{P},t)\in\mathbb{Q}[t]$ of degree $\dim(\mathcal{P})$, called the \emph{Ehrhart polynomial} of~$\cP$.
Furthermore, the coefficient of the leading term of $\ehr(\mathcal{P},t)$ is the relative volume of~$\cP$.

An immediate consequence of the definition is that $\ehr(n\cP,t)=\ehr(\cP,nt)$, for any positive integer~$n$.

For lattice polytopes $\cP_1$ and $\cP_2$, the Cartesian product $\cP_1\times\cP_2$ is
a lattice polytope, and we have $\ehr(\cP_1\times\cP_2,t)=\ehr(\cP_1,t)\ehr(\cP_2,t)$.
The Ehrhart polynomial also satisfies an inclusion-exclusion property~\cite[Section 5]{beck2018combinatorial},
as follows. For lattice polytopes $\cP_1$ and $\cP_2$ such that 
the polytope $\cP_1\cap\cP_2$ is a lattice polytope and 
$\cP_1\cup\cP_2$ is a polytope, we have the properties that $\cP_1\cup\cP_2$ 
is a lattice polytope, and 
\begin{equation}\label{eq:ie-general}
\ehr(\cP_1\cup\cP_2,t)=\ehr(\cP_1,t)+\ehr(\cP_2,t)-\ehr(\cP_1\cap\cP_2,t). 
\end{equation}
We will often apply \eqref{eq:ie-general} to the case in which $\cP_1\cap\cP_2$ is a facet of $\cP_2$.
A set of the form $\cP\setminus\cF$, with~$\cF$ a facet of~$\cP$, is called a half-open polytope.
We can extend the definition of Ehrhart polynomials to lattice half-open polytopes by $\ehr(\cP\setminus\cF,t)=\ehr(\cP,t)-\ehr(\cF,t)$.
		
\begin{example}\label{ex:half-open}
For the standard $m$-simplex $\simp_m$, we have $\ehr(\simp_m,t)=\binom{t+m}{m}$,
since $t\simp_m\cap\mathbb{Z}^m=\{\mathbf{x}\in\mathbb{Z}^m\mid x_i\ge0\text{ for all }i\in[m],\;\sum_{i=1}^mx_i\le t\}$.
Similarly, it can be seen that, for each $i\in[m]$, the facet $\{\mathbf{x}\in\simp_m\mid x_i=0\}$ of $\simp_m$ has Ehrhart polynomial $\binom{t+m-1}{m-1}$, and that the remaining facet 
$\{\mathbf{x}\in\simp_m\mid\sum_{i=1}^mx_i=1\}$ of $\simp_m$ also has Ehrhart polynomial
$\binom{t+m-1}{m-1}$.  More technically,
any facet of $\simp_m$ is unimodularly equivalent to $\simp_{m-1}$,
and so its Ehrhart polynomial is $\ehr(\simp_{m-1},t)$.
Hence, the Ehrhart polynomial of $\simp_m$ minus any facet is $\binom{t+m}{m}-\binom{t+m-1}{m-1}=\binom{t+m-1}{m}$.
We call such a half-open polytope a standard half-open simplex, and denote it as $\widetilde{\Delta}_m$.
\end{example}

\begin{remark}\label{rem:pyramid}
The Ehrhart polynomial of a lattice polytope $\cP$ of dimension $d$ is often expressed
as $\ehr(\cP,t)=\sum_{i=0}^d h_i^\ast\,\binom{t+d-i}{d}$, where $(h^\ast_0,\ldots,h^\ast_d)$ is called the $h^\ast$-vector of $\cP$.
This binomial coefficient basis is helpful for computing the Ehrhart polynomial of certain pyramids, as follows.
A \emph{lattice pyramid} is a lattice polytope which is unimodularly equivalent to a pyramid $\textrm{Pyr}(\mathcal{B}',\mathbf{e}_m)\subseteq \mathbb{R}^m$, 
where $\mathcal{B}\subseteq\RR^{m-1}$ is a lattice polytope and $\mathcal{B}'=\{(x_1,\ldots,x_{m-1},0)\mid(x_1,\ldots,x_{m-1})\in\mathcal{B}\}$.
It follows from~\cite[Theorem~2.4]{BR} that the $h^\ast$-vector of such a lattice pyramid is obtained by simply appending a zero to the right end of the $h^\ast$-vector of $\mathcal{B}$.
Hence, if $\mathcal{B}$ has dimension $d$ and $\ehr(\mathcal{B},t)=\sum_{i=0}^d h_i^\ast\,\binom{t+d-i}{d}$, then $\textrm{Pyr}(\mathcal{B}',\mathbf{e}_m)$ has
dimension $d+1$ and $\ehr(\textrm{Pyr}(\mathcal{B}',\mathbf{e}_m),t)=\sum_{i=0}^d h_i^\ast\,\binom{t+d+1-i}{d+1}$.
\end{remark}
		
\subsection{The sculpting strategy}
All of our computations of the normalized volume or Ehrhart polynomial of~$\cP(m,n)$ 
in Section~\ref{sec:volppn}, and many of those in Section~\ref{sec:Ehrhart}, 
follow the same sculpting strategy. We start with a well-known polytope and remove other known polytopes by adding inequalities, until we obtain the desired 
polytope $\cP(m,n)$.	
More precisely, we  create a sequence of lattice polytopes $\cP_1,\cP_2,\dots,\cP_k=\cP(m,n)$, where~$\cP_1$ is either the $\binom{n+1}{2}$-dilated standard $m$-simplex $\binom{n+1}{2}\simp_m$ (in Section~\ref{sec:volppn}) or the $m$-cube $[0,n]^m$ of side-length~$n$ (in Section~\ref{sec:Ehrhart}).  We then obtain $\cP_{i+1}$ from $\cP_i$ by adding inequalities to $\cP_i$, i.e., by taking an intersection of~$\cP_i$ with closed halfspaces, and thus removing some pieces from $\cP_i$.

A simple example which illustrates this idea is as follows.
\begin{example}
Figure \ref{fig:32exploded} shows the partial permutohedron $\cP(3,2)=
\{\mathbf{x}\in\RR^3\mid 0\le x_i\le 2\text{ for all }i\in[3],\;x_1+x_2+x_3\le3\}$ as the $3$-dilated standard $3$-simplex
$3\simp_3$, minus three copies of the standard $3$-simplex $\simp_3$.  
\begin{figure}[ht]
\centering
\includegraphics[height=2in]{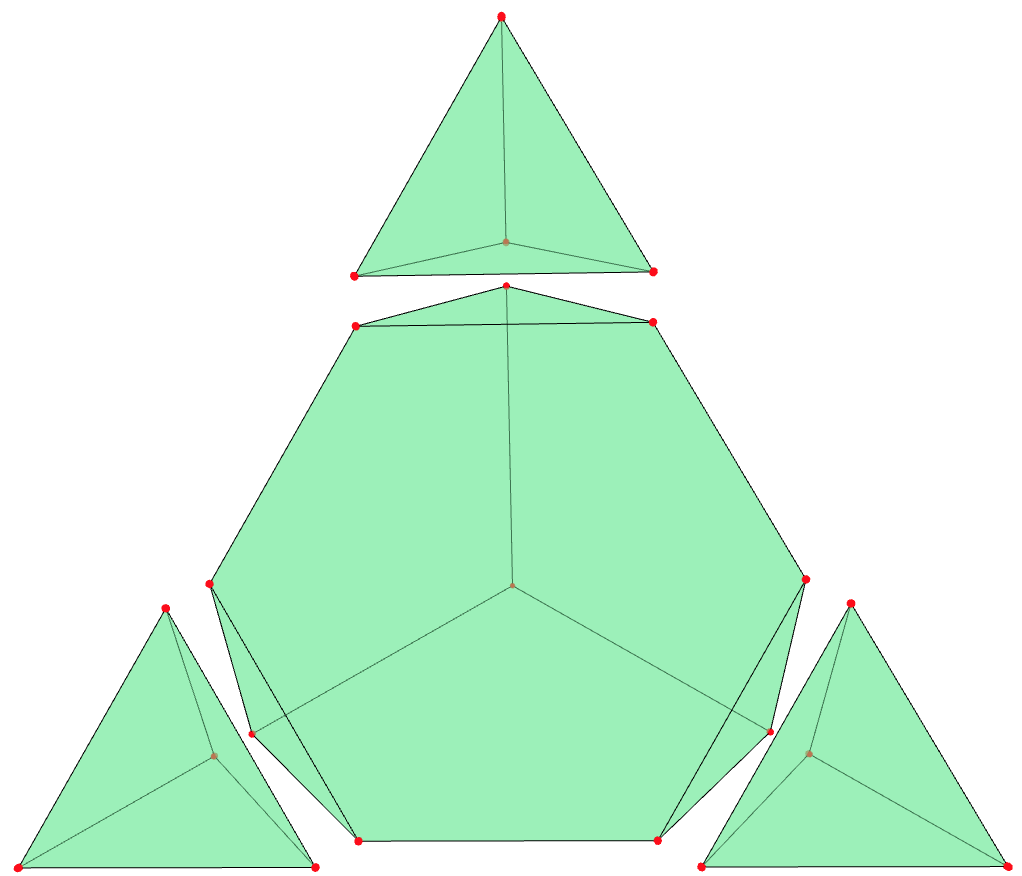}
\caption{Illustration of $P(3,2)$ as $3\simp_3$, minus three copies of $\simp_3$.}
\label{fig:32exploded}
\end{figure}
\end{example}

When applying the sculpting process, we will keep track of the removed pieces using the following lemma.
		
\begin{lemma}\label{lem:strip}
Consider a polytope $\cP\subseteq\mathbb{R}^m$, $\mathbf{a}\in\mathbb{R}^m$ and
$b\in\RR$, and define the polytopes
\begin{equation*}
\cP'=\{\mathbf{x}\in\cP\mid\langle \mathbf{a},\mathbf{x}\rangle \leq b\},\ \cQ=\{\mathbf{x}\in\cP\mid\langle \mathbf{a},\mathbf{x}\rangle \geq b\}\ 
\text{and}\ \cF=\{\mathbf{x}\in\cP\mid\langle \mathbf{a},\mathbf{x}\rangle = b\},
\end{equation*}
where we require that $\cF$ is a facet of both $\cP'$ and $\cQ$.
Then the vertices of $\cQ$ are given by all of the following:
\begin{enumerate}
\item Vertices $\mathbf{v}$ of $\cP$ such that $\langle \mathbf{a},\mathbf{v}\rangle \ge b$.
\item The unique point in $\{\mathbf{x}\in\mathbb{R}^m\mid\langle \mathbf{a},\mathbf{x}\rangle = b\}\cap [\mathbf{v},\mathbf{w}]$, for all edges $[\mathbf{v},\mathbf{w}]$ of $\cP$ such that $\langle \mathbf{a},\mathbf{w}\rangle<b<\langle\mathbf{a},\mathbf{v}\rangle$. 
\end{enumerate}
Furthermore, assuming that $\cP$, $\cP'$, $\cQ$ and $\cF$ are lattice polytopes, we have 
\begin{equation}\label{eq:ie-lemma}
\ehr(\cP',t)=\ehr(\cP,t)-\left(\ehr(\cQ,t)-\ehr(\cF,t)\right).
\end{equation}
\end{lemma}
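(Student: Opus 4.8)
The plan is to treat the two assertions separately: the Ehrhart identity~\eqref{eq:ie-lemma} is a one-line consequence of the inclusion--exclusion property~\eqref{eq:ie-general}, while the description of the vertices of $\cQ$ is the substantive part. Writing $H^+=\{\mathbf{x}\mid\langle\mathbf{a},\mathbf{x}\rangle\ge b\}$, $H^-=\{\mathbf{x}\mid\langle\mathbf{a},\mathbf{x}\rangle\le b\}$ and $H=H^+\cap H^-$, we have $\cP'=\cP\cap H^-$, $\cQ=\cP\cap H^+$ and $\cF=\cP\cap H$, whence $\cP'\cup\cQ=\cP\cap(H^-\cup H^+)=\cP$ and $\cP'\cap\cQ=\cF$. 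Since $\cP$, $\cP'$, $\cQ$ and $\cF$ are all assumed to be lattice polytopes, the hypotheses of~\eqref{eq:ie-general} hold with $\cP_1=\cP'$ and $\cP_2=\cQ$, giving $\ehr(\cP,t)=\ehr(\cP',t)+\ehr(\cQ,t)-\ehr(\cF,t)$, and solving for $\ehr(\cP',t)$ yields~\eqref{eq:ie-lemma}.

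For the vertices of $\cQ=\cP\cap H^+$, I would argue in both directions. For the inclusion ``every vertex of $\cQ$ is listed in (1) or (2)'': let $\mathbf{p}$ be a vertex of $\cQ$. If $\langle\mathbf{a},\mathbf{p}\rangle>b$, then $\cP$ and $\cQ$ agree in a neighbourhood of $\mathbf{p}$, so $\mathbf{p}$ is a vertex of $\cP$ lying in $H^+$, which is case~(1). If $\langle\mathbf{a},\mathbf{p}\rangle=b$, let $G$ be the unique face of $\cP$ with $\mathbf{p}\in\mathrm{relint}(G)$. If $\dim G\ge2$, then $G\cap H$ has dimension at least $1$ with $\mathbf{p}$ in its relative interior, so $\mathbf{p}$ lies in the relative interior of a segment contained in $\cQ$, contradicting that $\mathbf{p}$ is a vertex of $\cQ$; hence $\dim G\le1$. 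If $\dim G=0$ then $\mathbf{p}$ is a vertex of $\cP$ on $H$, i.e. case~(1). If $\dim G=1$ then $G=[\mathbf{v},\mathbf{w}]$ is an edge of $\cP$ with $\mathbf{p}$ in its relative interior; the affine function $\langle\mathbf{a},\cdot\rangle$ cannot be constantly equal to $b$ on $[\mathbf{v},\mathbf{w}]$ (else $[\mathbf{v},\mathbf{w}]\subseteq\cF\subseteq\cQ$ again contradicts vertexhood of $\mathbf{p}$), so, after relabelling, $\langle\mathbf{a},\mathbf{w}\rangle<b\le\langle\mathbf{a},\mathbf{v}\rangle$ and $\mathbf{p}=H\cap[\mathbf{v},\mathbf{w}]$, which is case~(2). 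For the reverse inclusion: a vertex of $\cP$ lying in $\cQ$ is automatically an extreme point of the subset $\cQ$, hence a vertex of $\cQ$, covering the points in~(1); and for an edge $[\mathbf{v},\mathbf{w}]$ of $\cP$ with $\langle\mathbf{a},\mathbf{w}\rangle<b\le\langle\mathbf{a},\mathbf{v}\rangle$, pick a supporting hyperplane $\{\langle\mathbf{c},\mathbf{x}\rangle=d\}$ of $\cP$ with $\cP\cap\{\langle\mathbf{c},\mathbf{x}\rangle=d\}=[\mathbf{v},\mathbf{w}]$ and $\langle\mathbf{c},\mathbf{x}\rangle\ge d$ on $\cP$; then $\cQ\cap\{\langle\mathbf{c},\mathbf{x}\rangle=d\}=[\mathbf{v},\mathbf{w}]\cap H^+$ is the (possibly degenerate) sub-segment with endpoints $\mathbf{v}$ and $H\cap[\mathbf{v},\mathbf{w}]$, which is a face of $\cQ$, so $H\cap[\mathbf{v},\mathbf{w}]$ is a vertex of $\cQ$.

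I expect the vertex enumeration to be the only real work here: it is the standard ``add-a-halfspace'' update for vertex sets, and the care needed is entirely in the bookkeeping of the boundary case $\langle\mathbf{a},\mathbf{v}\rangle=b$, in which the ``new'' vertex $H\cap[\mathbf{v},\mathbf{w}]$ degenerates to the old vertex $\mathbf{v}$ and is already accounted for by~(1), and in checking that the two displayed families together both exhaust and consist only of vertices of $\cQ$. The Ehrhart identity, by contrast, needs nothing beyond~\eqref{eq:ie-general} and the elementary set identities above.
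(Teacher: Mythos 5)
Your proof is correct and follows essentially the same route as the paper: the Ehrhart identity is obtained as a special case of the inclusion--exclusion property~\eqref{eq:ie-general} via $\cP'\cup\cQ=\cP$ and $\cP'\cap\cQ=\cF$, and the vertex description is the standard ``intersect with a halfspace'' update, where new vertices of $\cQ$ on the hyperplane arise from edges of $\cP$ crossing it. The only difference is one of detail: the paper cites this slicing fact in a single sentence, whereas you verify both inclusions explicitly (including the degenerate case $\langle\mathbf{a},\mathbf{v}\rangle=b$), which is a welcome but not substantively different elaboration.
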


\begin{proof}
The characterization of the vertices of~$\cQ$ follows from the computation of the vertices of $\cF$, since every vertex of $\cF$ can be obtained as
the intersection of $\{\mathbf{x}\in\mathbb{R}^m\mid\langle \mathbf{a},\mathbf{x}\rangle = b\}$ with an edge of~$\cP$. (This intersection may occur at the endpoint of the edge, in which case 
the vertex of~$\cF$ is a vertex of~$\cP$.)
We can obtain \eqref{eq:ie-lemma} from~\eqref{eq:ie-general},
by taking~$\cP_1$ and~$\cP_2$ in~\eqref{eq:ie-general} to be~$\cP'$ and~$\cQ$.
\end{proof}

For the polytopes in Lemma~\ref{lem:strip},
we say that $\cP'$ is obtained from $\cP$ by removing the half-open polytope $\cQ\setminus\cF$,
or that $\cP'$ is obtained from $\cP$ by adding the inequality $\langle \mathbf{a},\mathbf{x}\rangle \leq b$.  For the vertices of $\cQ$ in Lemma~\ref{lem:strip},
we say that those of type~(1) are on the \emph{forbidden side}, and those of type~(2)
are obtained by \emph{cutting along edges}.

To apply Lemma~\ref{lem:strip} to the sculpting process, we need to know the vertices and edges of the intermediate polytopes which are used.
These intermediate polytopes are covered by the following definition.
		
\begin{definition}\label{def:Pi}
For any $\mathbf{z}\in\mathbb{R}^m$ with nonnegative entries, consider the permutohedron $\Pi(\mathbf{z})=\textrm{ConvexHull}(\{(z_{\sigma(1)},\ldots,z_{\sigma(m)})\mid\sigma\in\mathfrak{S}_m\})$, and 
define the related polytope 
\begin{equation}\label{Pihat}
\widehat{\Pi}(\mathbf{z})=\bigl\{\mathbf{x}\in\mathbb{R}^m\bigm|\mathbf{0}\leq \mathbf{x}\leq\mathbf{y}\text{ for some }\mathbf{y}\in\Pi(\mathbf{z})\bigr\},\end{equation}
where for $\mathbf{x},\mathbf{y}\in\mathbb{R}^m$ we write $\mathbf{x}\leq \mathbf{y}$ if $x_i\leq y_i$ for all $i\in[m]$.
\end{definition}

Some basic properties of $\widehat{\Pi}(\mathbf{z})$ are outlined in the following remark.
\begin{remark} 
It can be seen, using~\eqref{Pihat}, that
\begin{equation}\label{Pihat1}\widehat{\Pi}(\mathbf{z})=\bigl\{(x_1y_1,\ldots,x_my_m)\bigm|\mathbf{x}\in[0,1]^m,\ \mathbf{y}\in\Pi(\mathbf{z})\bigr\},\end{equation}
and it can also be shown straightforwardly that 
\begin{equation}\label{Pihat2}\widehat{\Pi}(\mathbf{z})=\textrm{ConvexHull}\bigl(\bigl\{(a_1z_{\sigma(1)},\ldots,a_mz_{\sigma(m)})\bigm|\mathbf{a}\in\{0,1\}^m,\ \sigma\in\mathfrak{S}_m\bigr\}\bigr).\end{equation}
It follows from~\eqref{Pihat2} that $\widehat{\Pi}(\mathbf{z})$ is indeed a polytope, and that its set of vertices is a subset of 
$\bigl\{(a_1z_{\sigma(1)},\ldots,a_mz_{\sigma(m)})\bigm|\mathbf{a}\in\{0,1\}^m,\ \sigma\in\mathfrak{S}_m\bigr\}$.  It can also be seen that $\Pi(\mathbf{z})=\{\mathbf{x}\in\widehat{\Pi}(\mathbf{z})\mid\sum_{i=1}^mx_i=\sum_{i=1}^mz_i\}$.
It follows from~\eqref{Pihat} that $\widehat{\Pi}(\mathbf{z})$ contains the $z_i$-dilated
standard $m$-simplex $z_i\simp_m$, for each $i\in[m]$
(since $\widehat{\Pi}(\mathbf{z})$ contains each vertex of $z_i\simp_m$). Hence, provided that $\mathbf{z}$ is nonzero, $\widehat{\Pi}(\mathbf{z})$ has dimension~$m$.
\end{remark}
   
We refer to a polytope $\cP$ which is contained in the nonnegative orthant
of~$\RR^m$, and which has the property that if $\mathbf{x}\in\RR^m$ and $\mathbf{y}\in\cP$
satisfy $\mathbf{0}\leq \mathbf{x}\leq \mathbf{y}$ then $\mathbf{x}\in\cP$, as an \emph{anti-blocking} polytope. 
Hence, $\widehat{\Pi}(\mathbf{z})$ is an anti-blocking polytope, and can be regarded
as an anti-blocking version of the permutohedron~$\Pi(\mathbf{z})$.  Certain pairs of anti-blocking polytopes are studied in~\cite{fulkerson},
and certain sets, namely \emph{convex corners} or \emph{compact convex down-sets},
which include anti-blocking polytopes, are studied in~\cite{bollobas}.

The vertices and edges of $\widehat{\Pi}(\mathbf{z})$ are characterized in the following proposition.

\begin{proposition}\label{prop-vertices-edges}
Consider $\mathbf{z}\in\mathbb{R}^m$ with $z_1\geq z_2\geq \dots\geq z_m\geq 0$.
Then the vertices of $\widehat{\Pi}(\mathbf{z})$ are the vectors in $\RR^m$ with entries of zero in any $m-k$ positions, and with the other~$k$ entries being $z_1,\dots,z_k$ in any order, where $k$ ranges from $0$ to $m$.

Two vertices of $\widehat{\Pi}(\mathbf{z})$ form an edge of $\widehat{\Pi}(\mathbf{z})$ if and only if one of the following conditions holds:
\begin{enumerate}
\item One vertex can be obtained from the other by setting its smallest nonzero entry to zero.
\item The vertices differ only by interchanging the positions of entries $z_i$ and $z_{i+1}$, for some $i\in[m-1]$.
\end{enumerate}
\end{proposition}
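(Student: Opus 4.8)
The plan is to analyze, for an arbitrary linear functional $\mathbf{c}\in\RR^m$, the optimal value and optimal face of $\langle\mathbf{c},\cdot\rangle$ on $\widehat{\Pi}(\mathbf{z})$, and to read the vertex and edge descriptions off that analysis. The engine is the ``sweep‑down'' description~\eqref{Pihat}, namely $\widehat{\Pi}(\mathbf{z})=\{\mathbf{x}\geq\mathbf{0}\mid\mathbf{x}\leq\mathbf{y}\text{ for some }\mathbf{y}\in\Pi(\mathbf{z})\}$, together with the rearrangement inequality on $\Pi(\mathbf{z})$ (whose vertices are the permutations of $\mathbf{z}$). First I would note that maximizing $\langle\mathbf{c},\cdot\rangle$ over $\widehat{\Pi}(\mathbf{z})$ forces every coordinate in a negative position $\{i\mid c_i<0\}$ to vanish, and on the positive positions $\{i\mid c_i>0\}$ — list them as $p_1,\dots,p_k$ with $c_{p_1}\geq\cdots\geq c_{p_k}>0$ — it reduces the problem to maximizing $\sum_{j=1}^{k}c_{p_j}y_{p_j}$ over $\mathbf{y}\in\Pi(\mathbf{z})$, which by rearrangement equals $\sum_{j=1}^{k}c_{p_j}z_j$. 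Hence the optimal face $\cF_{\mathbf{c}}$ consists of the points of $\widehat{\Pi}(\mathbf{z})$ that are $0$ on the negative positions, carry a rearrangement of the \emph{values} $z_1,\dots,z_k$ on $p_1,\dots,p_k$ that is compatible with the tie‑blocks of $(c_{p_1},\dots,c_{p_k})$, and are otherwise only constrained by membership in $\widehat{\Pi}(\mathbf{z})$ on the zero set $Q=\{i\mid c_i=0\}$.

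For the vertex description, take $\mathbf{c}$ generic (all entries distinct and nonzero). Then the analysis above shows $\cF_{\mathbf{c}}$ is a single point: $0$ off $\{p_1,\dots,p_k\}$ and carrying $z_1,\dots,z_k$ in decreasing‑$c$ order — which is exactly one of the vectors in the statement, repeated entries of $\mathbf{z}$ causing no difficulty since only the multiset $\{z_1,\dots,z_k\}$ of placed values is pinned down. Since every vertex of a polytope is the unique maximizer of some generic functional, the vertex set of $\widehat{\Pi}(\mathbf{z})$ is contained in the claimed set. Conversely, each claimed vector is realized this way: for a vector with support $P$ and entries $z_1,\dots,z_k$, take $\mathbf{c}$ equal to $-1$ off $P$ and strictly decreasing and positive on $P$ (matched to the arrangement); the face computation gives $\cF_{\mathbf{c}}=\{\text{that vector}\}$, so it is a vertex. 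This pins down the vertex set exactly, and simultaneously shows $\widehat{\Pi}(\mathbf{z})$ is the convex hull of those vectors.

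For the edges, an edge is a $1$‑dimensional optimal face $\cF_{\mathbf{c}}$, so I would classify the $\mathbf{c}$ for which $\dim\cF_{\mathbf{c}}=1$. The key step is to show that $\cF_{\mathbf{c}}$ decomposes, on orthogonal coordinate blocks, as a direct product of a permutohedron for each positive tie‑block of $\mathbf{c}$ together with a polytope of the form $\widehat{\Pi}$ on $Q$ built from the residual values $z_{k+1},z_{k+2},\dots$; consequently $\dim\cF_{\mathbf{c}}=\sum_r(s_r-1)$ over positive tie‑blocks of size $s_r$ whose associated $z$‑values are not all equal, plus $|Q|$ if some residual value $z_{k+1},\dots,z_{k+|Q|}$ is positive and $0$ otherwise. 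Setting this equal to $1$ leaves exactly two possibilities: a single size‑$2$ positive tie‑block whose two $z$‑values are distinct, which unwinds to two vertices of common support differing by an adjacent transposition of values $z_i>z_{i+1}$ — the type‑(2) edges; or all tie‑blocks degenerate and a single zero‑coefficient position whose residual value is positive, which gives a segment from a vertex to the vertex obtained by adjoining that value at the smallest‑value slot — the type‑(1) edges. Explicit functionals of these two shapes realize every edge of each type, so types (1) and (2) are precisely the edges. I expect the main obstacle to be justifying the product decomposition and dimension formula for $\cF_{\mathbf{c}}$ carefully and uniformly over all patterns of equal and zero entries of $\mathbf{z}$ (which can, for instance, fuse several zero‑coefficient positions into one effective degree of freedom, or collapse a would‑be $2$‑face to a point); if this becomes unwieldy, an alternative is to prove the proposition first for $\mathbf{z}$ with distinct positive entries via the functional arguments above, and then recover the general case by a degeneration argument, tracking which vertices coincide and which faces persist as entries of $\mathbf{z}$ are allowed to become equal or zero.
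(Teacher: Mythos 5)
Your proposal is correct and follows essentially the same route as the paper's proof: both analyze an arbitrary linear functional by partitioning coordinates into negative, zero, and positive tie-block positions, use the rearrangement/anti-blocking structure to describe the optimal face, and then identify vertices and edges as the $0$- and $1$-dimensional cases. The only cosmetic difference is that you extract edges via a product decomposition and dimension count of the optimal face, whereas the paper directly characterizes functionals maximized by exactly two vertices; the underlying case analysis (one nontrivial size-two tie block versus one free zero-coefficient position with positive residual value) is identical.
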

 		
\begin{proof}
Recall that faces of a polytope~$\cP$ are obtained by maximizing a linear functional over~$\cP$,
and that vertices of~$\cP$ are obtained as those points in~$\cP$ which are the unique maximizers of a linear functional.

Consider a linear functional $\langle\mathbf{w},\cdot\rangle$, for $\mathbf{w}\in\RR^m$,
and let $\alpha_1>\dots>\alpha_t$ be the distinct positive entries of $\mathbf{w}$.
Using the entries of $\mathbf{w}$, we define a partition of $[m]$ as follows.
\begin{itemize}
\item $X_k=\{i\in[m]\mid w_i=\alpha_k\}$, for $k=1,\dots,t$.
\item $Y=\{i\in[m]\mid w_i<0\}$.
\item $Z=\{i\in[m]\mid w_i=0\}$.
\end{itemize}
If $\mathbf{p}\in\widehat{\Pi}(\mathbf{z})$ maximizes $\langle\mathbf{w},\cdot\rangle$ over $\widehat{\Pi}(\mathbf{z})$, then each of the following conditions is satisfied.
\begin{itemize}
\item[(i)] For each $i\in Y$, $p_i=0$.  This occurs because if we had $p_i>0$ for some $i\in Y$, then we could change $\mathbf{p}$ to another vector $\mathbf{p}'$ by replacing $p_i$ by zero, and by the anti-blocking property we would then have $\mathbf{p}'\in\widehat{\Pi}(\mathbf{z})$ and $\langle\mathbf{w},\mathbf{p}\rangle<\langle\mathbf{w},\mathbf{p}'\rangle$.
\item[(ii)] The largest $|X_1|$ entries (allowing equal entries) of $\mathbf{p}$ must be in positions~$X_1$, the next largest $|X_2|$ entries (allowing equal entries) must be in positions $X_2$, and so on up to the entries in positions~$X_t$. Furthermore, the nonzero entries of $\mathbf{p}$ are $z_1,\dots,z_l$ for some $l$.
These conditions occur due to the following reasons. The definition of $\Pi(\mathbf{z})$
as the convex hull of vectors obtained by permuting entries of $\mathbf{z}$ implies that $\sum_{i=1}^mx_i=\sum_{i=1}^mz_i$ for all $\mathbf{x}\in\Pi(\mathbf{z})$,
and the definition of $\widehat{\Pi}(\mathbf{z})$ as an anti-blocking version of $\Pi(\mathbf{z})$ then implies that $\sum_{i=1}^mx_i\le\sum_{i=1}^mz_i$ for all $\mathbf{x}\in\widehat{\Pi}(\mathbf{z})$.
Hence,  the $i$-th largest entry (allowing equal entries) of~$\mathbf{p}$ is at most $z_i$,
and in order to maximize $\langle\mathbf{w},\cdot\rangle$, the nonzero entries of $\mathbf{p}$ must be $z_1,\dots,z_l$ for some $l$.
\item[(iii)] The entries of $\mathbf{p}$ in positions $Z$ are irrelevant, as they do not affect the value of 
$\langle\mathbf{w},\mathbf{p}\rangle$.
\end{itemize}
Now if $\mathbf{p}$ is a \emph{unique} maximizer of $\langle\mathbf{w},\cdot\rangle$ over $\widehat{\Pi}(\mathbf{z})$, then each of the following conditions is also satisfied.
\begin{itemize}
\item[(iv)] We have $z_1=\dots =z_{|X_1|}$,  $z_{|X_1|+1}=\dots=z_{|X_1|+|X_2|}$, \ldots,  $z_{|X_1|+\ldots+|X_{t-1}|+1}=\dots=z_{|X_1|+\ldots+|X_t|}$.
\smallskip
\item[(v)] We have $z_{|X_1|+\ldots+|X_t|+1}=\dots=z_{|X_1|+\ldots+|X_t|+|Z|}=0$,
with the entries of $\mathbf{p}$ in positions~$Z$ all being~$0$.
\end{itemize}
It follows that the vertices of $\widehat{\Pi}(\mathbf{z})$ are precisely those specified in the proposition.

Lastly, recall that an edge is a face with exactly two vertices.
Hence, we now need to consider a linear functional $\langle\mathbf{w},\cdot\rangle$ which is maximized by exactly two vertices of $\widehat{\Pi}(\mathbf{z})$.
By reviewing the argument above (and again using the entries of~$\mathbf{w}$ to partition $[m]$ into sets $X_1$, \ldots, $X_t$, $Y$ and~$Z$),
this can only occur provided that conditions~(i)--(iii) still hold for each of the vertices, 
and conditions~(iv) or~(v) also hold, but with one or other of the following modifications.
\begin{itemize}
\item There exists exactly one $j$ for which $|X_j|=2$, and 
$z_{|X_1|+\ldots+|X_{j-1}|+1}>z_{|X_1|+\ldots+|X_{j-1}|+2}$, with the remaining equalities in condition~(iv) still holding.
In this case, the two vertices differ by a swap of the first kind, as described in~(1)
of the proposition.
\item We have $z_{|X_1|+\ldots+|X_t|+1}>0$ and $z_{|X_1|+\ldots+|X_t|+2}=\dots=z_{|X_1|+\ldots+|X_t|+|Z|}=0$,
with the entries of the two vertices in positions~$Z$ all being~$0$, except for one such entry
in one of the vertices.
In this case, one of the vertices can be obtained from the other by setting its smallest entry equal to zero, as described in~(2)
of the proposition.\qedhere
\end{itemize}
\end{proof}
 		
From the characterization of vertices of $\cP(m,n)$ given by Proposition~\ref{p-vertices},
and the characterization of vertices of $\widehat{\Pi}(\mathbf{z})$ given by Proposition~\ref{prop-vertices-edges}, we obtain the following corollary to those results.
\begin{corollary}\label{cor-anti-block}
The partial permutohedron $\cP(m,n)$ is a special case of an anti-blocking polytope~$\widehat{\Pi}(\mathbf{z})$, with
\begin{equation}\label{eq-anti-block}\cP(m,n)=\begin{cases}
\widehat{\Pi}(n,n-1,n-2,\dots,1,\underbrace{0,\ldots,0\!}_{m-n}\,),&\text{ if }n\le m-2,\\[4mm]
\widehat{\Pi}(n,n-1,n-2,\dots,n-m+1),&\text{ if }n\ge m-1.\end{cases}\end{equation}
\end{corollary}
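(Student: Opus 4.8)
The plan is to prove Corollary~\ref{cor-anti-block} by directly comparing the vertex sets of $\cP(m,n)$ and $\widehat{\Pi}(\mathbf{z})$ for the two candidate choices of $\mathbf{z}$, and invoking the fact that a polytope is uniquely determined by its vertex set. First I would recall that, by Proposition~\ref{p-vertices}, the vertices of $\cP(m,n)$ are precisely the vectors in $\RR^m$ having zeros in any $m-k$ positions and the values $n,n-1,\ldots,n-k+1$ in the remaining $k$ positions in any order, where $k$ ranges over $0,\ldots,\min(m,n)$. Separately, Proposition~\ref{prop-vertices-edges} tells us that for $\mathbf{z}$ with $z_1\ge z_2\ge\cdots\ge z_m\ge 0$, the vertices of $\widehat{\Pi}(\mathbf{z})$ are the vectors with zeros in any $m-k$ positions and the values $z_1,\ldots,z_k$ in the remaining $k$ positions in any order, where now $k$ ranges over $0,\ldots,m$.

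The heart of the argument is then a case split on whether $n\ge m-1$ or $n\le m-2$, matching the statement of the corollary. In the case $n\ge m-1$, take $\mathbf{z}=(n,n-1,\ldots,n-m+1)$; these entries are strictly decreasing and nonnegative (since $n-m+1\ge 0$), so Proposition~\ref{prop-vertices-edges} applies and gives vertices with zeros in $m-k$ positions and entries $n,n-1,\ldots,n-k+1$ in the others, for $k=0,\ldots,m$. Since $\min(m,n)=m$ here, this is exactly the vertex description from Proposition~\ref{p-vertices}, so $\cP(m,n)=\widehat{\Pi}(n,n-1,\ldots,n-m+1)$. In the case $n\le m-2$, take $\mathbf{z}=(n,n-1,\ldots,1,0,\ldots,0)$ with $m-n$ trailing zeros; again the entries are weakly decreasing and nonnegative, so Proposition~\ref{prop-vertices-edges} applies. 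Here one must observe that $z_1,\ldots,z_k$ equals $n,n-1,\ldots,n-k+1$ when $k\le n$, and that when $k>n$ the list $z_1,\ldots,z_k$ contains $k-n$ zeros, so a vertex with $m-k$ forced zeros plus $z_1,\ldots,z_k$ in the other positions is the same as a vertex with $m-n$ zeros (in some positions) and $n,n-1,\ldots,1$ in the rest — i.e., it already appears among the $k=n$ vertices. Thus the vertex set of $\widehat{\Pi}(\mathbf{z})$ collapses to exactly the vectors described in Proposition~\ref{p-vertices} with $k$ ranging over $0,\ldots,n=\min(m,n)$, giving $\cP(m,n)=\widehat{\Pi}(n,n-1,\ldots,1,0,\ldots,0)$.

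The main obstacle — though it is more bookkeeping than genuine difficulty — is handling the redundancy in the $n\le m-2$ case: the naive list of vertices of $\widehat{\Pi}(\mathbf{z})$ indexed by $k=0,\ldots,m$ overcounts, because placing several of the zero entries $z_{n+1},\ldots,z_m$ into ``nonzero'' positions produces the same point as placing them into the ``forced zero'' positions. One must argue carefully that, after removing duplicates, the set of points obtained is precisely $\{$vectors with $m-k$ zeros and $n,\ldots,n-k+1$ elsewhere $:\ 0\le k\le n\}$, neither more nor less. One clean way to phrase this is: every vertex of $\widehat{\Pi}(\mathbf{z})$ has all nonzero entries distinct and drawn from $\{1,\ldots,n\}$ forming an initial segment $\{n,n-1,\ldots,n-k+1\}$ of the top values, which matches exactly the vertices of $\cP(m,n)$; conversely every vertex of $\cP(m,n)$ arises this way. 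Since two polytopes with the same vertex set are equal, the corollary follows. I would also remark in passing that this is consistent with the dimension count (both polytopes are $m$-dimensional, as already recorded in Proposition~\ref{p-dim} and in the remark following Definition~\ref{def:Pi}).
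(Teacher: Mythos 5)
Your proposal is correct and follows essentially the same route as the paper, which derives the corollary directly by matching the vertex characterization of $\cP(m,n)$ from Proposition~\ref{p-vertices} with that of $\widehat{\Pi}(\mathbf{z})$ from Proposition~\ref{prop-vertices-edges}. Your careful treatment of the collapse of the vertex list when $\mathbf{z}$ has trailing zeros (the case $n\le m-2$) is a welcome elaboration of a detail the paper leaves implicit.
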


Some remarks on Theorem~\ref{prop-vertices-edges} and Corollary~\ref{cor-anti-block} are as follows.
\begin{remark}\label{rem-edge}
The characterization of edges of $\widehat{\Pi}(\mathbf{z})$ given by Proposition~\ref{prop-vertices-edges} provides a characterization of edges of~$\cP(m,n)$, due to Corollary~\ref{cor-anti-block}.
Alternatively, this characterization for~$\cP(m,n)$ could have been obtained using
the bijection of Theorem~\ref{thm-faces-chains}, through which 
the edges of~$\cP(m,n)$ correspond to the chains in~$\mathcal{C}(m,n)$ with one missing rank.
\end{remark}

\begin{remark}
Although, by Proposition~\ref{p-simple}, $\cP(m,n)$ is a simple polytope,
$\widehat{\Pi}(\mathbf{z})$ is not a simple polytope for all~$\mathbf{z}$.
For example, consider $\widehat{\Pi}(1,1,0,0)$.  This polytope has dimension $4$,
but has a vertex $(1,1,0,0)$ which is adjacent to~$6$ other vertices 
(specifically, $(1,0,0,0)$, $(0,1,0,0)$, $(0,1,1,0)$, $(0,1,0,1)$,
$(1,0,1,0)$ and $(1,0,0,1)$), so the polytope is not simple.
\end{remark}

\subsection{The specific results}\label{sec:pmproofs}
We now provide the specific results for $\cP(m,n)$ with 
arbitrary $m$ and $n\le4$.  

For $n=1$, we have $\cP(m,1)=\Delta_m$
(as seen in Example~\ref{ex-Pm1}),
and so, using Example~\ref{ex:half-open}, $\cP(m,1)$ has Ehrhart polynomial $\binom{t+m}{m}$ and normalized volume $1$.

Proceeding to $n=2$, $n=3$ and $n=4$,
we note that, although the descriptions
we will use for $\cP(m,n)$ 
(specifically,~\eqref{eq:cPm2},~\eqref{eq:cPm3} and~\eqref{eq:Pm4-2})
will be taken from~\eqref{facetdesc} with $m\ge n$, all of these 
descriptions remain valid for
arbitrary $m$, since some of the inequalities 
within the descriptions become either redundant or empty for $m<n$.  For example, consider the description~\eqref{eq:Pm4-2} for~$\cP(m,4)$ which is used in the proof of Theorem~\ref{theorem:Pm4}.  For $m=3$, the first inequality, $x_1+x_2 +x_3\leq 10$,
in~\eqref{eq:Pm4-2} is redundant,
since the last class of inequalities, $x_i+x_j+x_k\le9$,
has the single case $x_1+x_2+x_3\le9$.  For $m=2$,
the first inequality, $x_1+x_2\leq 10$, in~\eqref{eq:Pm4-2} is again redundant (since there is also an inequality $x_1+x_2\le7$), and the last class of inequalities, $x_i+x_j+x_k\le9$, is now empty (since there are
no $i$, $j$ and $k$ with $1\le i<j<k\le 2$).

Our next result confirms Conjecture~5.30 in~\cite{HS}. 
\begin{theorem}[Conjecture~5.30 in \cite{HS}]
\label{theorem:pm2}
For any $m$, the Ehrhart polynomial of $\cP(m,2)$ is 
\begin{equation}\label{eq:pm2}\ehr(\cP(m,2),t)=\binom{3t+m}{m}-m\binom{t+m-1}{m},\end{equation}
and thus, taking $m!$ times the coefficient of $t^m$ in~$\ehr(\cP(m,2),t)$,
the normalized volume of $\cP(m,2)$ is 
\begin{equation}v(m,2)=3^m-m.\end{equation}
\end{theorem}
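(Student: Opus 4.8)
The plan is to apply the sculpting strategy of Section~\ref{sec:volppn}, building $\cP(m,2)$ out of the dilated simplex $3\simp_m=\binom{3}{2}\simp_m$. First I would record, from Proposition~\ref{p-facetdesc} with $n=2$, that
\[\cP(m,2)=\bigl\{\mathbf{x}\in\RR^m\bigm|0\le x_i\le 2\text{ for all }i\in[m],\ \textstyle\sum_{i=1}^m x_i\le 3\bigr\}=3\simp_m\cap\bigcap_{i=1}^m\{x_i\le 2\}.\]
For each $i\in[m]$ set $\cQ_i=\{\mathbf{x}\in 3\simp_m\mid x_i\ge 2\}$ and $\cF_i=\{\mathbf{x}\in 3\simp_m\mid x_i=2\}$. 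A direct vertex computation shows that $\cQ_i=2\,\mathbf{e}_i+\simp_m$, a lattice translate of the standard $m$-simplex, and that $\cF_i=2\,\mathbf{e}_i+\{\mathbf{x}\in\simp_m\mid x_i=0\}$, a lattice translate of a facet of $\simp_m$ and hence unimodularly equivalent to $\simp_{m-1}$. Thus $\cQ_i\setminus\cF_i$ is a standard half-open simplex in the sense of Example~\ref{ex:half-open}, so that $\ehr(\cQ_i\setminus\cF_i,t)=\binom{t+m}{m}-\binom{t+m-1}{m-1}=\binom{t+m-1}{m}$ by Pascal's rule.

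The key observation is that if $\mathbf{x}\in 3\simp_m$ satisfies $x_k\ge 2$, then $x_j+x_k\le\sum_i x_i\le 3$ forces $x_j\le 1$ for every $j\ne k$; in particular the sets $\cQ_1,\dots,\cQ_m$ are pairwise disjoint, and $\cQ_k\subseteq 3\simp_m\cap\bigcap_{j<k}\{x_j\le 2\}$ for each $k$. Hence, writing $\cP_0=3\simp_m$ and $\cP_k=\cP_{k-1}\cap\{x_k\le 2\}$ for $k=1,\dots,m$ (each a lattice polytope, with $\cP_m=\cP(m,2)$), the polytope $\cQ$ produced by Lemma~\ref{lem:strip} in passing from $\cP_{k-1}$ to $\cP_k$ is exactly $\cQ_k$, with associated facet $\cF_k$. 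Iterating~\eqref{eq:ie-lemma} over $k=1,\dots,m$, and using $\ehr(3\simp_m,t)=\ehr(\simp_m,3t)=\binom{3t+m}{m}$, yields
\[\ehr(\cP(m,2),t)=\binom{3t+m}{m}-\sum_{i=1}^m\bigl(\ehr(\cQ_i,t)-\ehr(\cF_i,t)\bigr)=\binom{3t+m}{m}-m\binom{t+m-1}{m},\]
which is~\eqref{eq:pm2}. Finally, $v(m,2)$ equals $m!$ times the coefficient of $t^m$ in this polynomial; since the leading coefficients in $t$ of $\binom{3t+m}{m}$ and of $m\binom{t+m-1}{m}$ are $3^m/m!$ and $m/m!$ respectively, this gives $v(m,2)=3^m-m$.

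I expect the one point requiring genuine care to be the bookkeeping around the successive removals: one must check that adding the constraints $x_1\le 2,\dots,x_m\le 2$ one at a time excises the $m$ half-open simplices $\cQ_i\setminus\cF_i$ cleanly, so that each $\cQ_i$ is still intact when its own step is reached and the excised pieces never overlap, which is exactly what the disjointness observation above supplies. Everything else is routine: the facet description is immediate from Proposition~\ref{p-facetdesc}, identifying $\cQ_i$ and $\cF_i$ as translates of standard simplices is a short vertex computation, and the closing step is Pascal's rule together with reading off a leading coefficient. As a sanity check one may also note the case $m=1$, where $\cP(1,2)=[0,2]=3\simp_1\setminus(2,3]$ and both formulas continue to hold.
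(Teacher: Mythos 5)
Your proof is correct and follows essentially the same route as the paper: sculpt $\cP(m,2)$ from $3\simp_m$ by removing, for each $i$, the half-open piece $\{x_i\ge 2\}\setminus\{x_i=2\}$, identify each removed piece as a lattice translate $2\e_i+\widetilde{\Delta}_m$ of the standard half-open simplex, and subtract $m\binom{t+m-1}{m}$. The only difference is presentational: you make explicit the pairwise disjointness of the $\cQ_i$ (via $x_j+x_k\le 3$) that justifies iterating Lemma~\ref{lem:strip} without overcounting, a point the paper's proof leaves implicit.
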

\begin{proof}
We first consider the polytope
\[\cP_1 = 3\simp_m= \left \{ \mathbf{x} \in \mathbb{R}^m  \,\middle |\, \begin{matrix}  x_1+x_2 + \cdots +x_m \leq 3,  \\  0 \leq x_i \text{ for all } 1 \leq i \leq m \end{matrix} \right \},\]
whose Ehrhart polynomial is $\binom{3t+m}{m}$.
		
We then consider
\begin{equation}\label{eq:cPm2} \cP_2=\cP(m,2)= \left \{ \mathbf{x} \in \mathbb{R}^m \,\middle |\, \begin{matrix}  x_1+x_2 + \cdots +x_m \leq 3,  \\  0 \leq x_i \leq 2 \text{ for all } 1 \leq i \leq m \end{matrix} \right \},\end{equation}
and we observe  that $\cP_2$ is $\cP_1$ with $m$ half-open polytopes $\widetilde{\cR}_1,\ldots,\widetilde{\cR}_m$ removed, where 
\begin{align*}\widetilde{\cR}_j&=\left\{\mathbf{x}\in\mathbb{R}^m\,\middle|\,\begin{matrix}x_1+x_2 + \cdots +x_m \leq 3,\\[1.2mm]
x_i\ge0\text{ for all }i\in[m]\setminus\{j\},\ x_j>2\end{matrix}\right\}\\
&=3\Delta_m\cap\{\mathbf{x}\in\mathbb{R}^m\mid x_j> 2\},\end{align*}
for $1\le j\le m$.  These half-open polytopes are pairwise disjoint since,
for $j\ne k$, any $\mathbf{x}\in\widetilde{\cR}_j\cap\widetilde{\cR}_k$ 
would need to satisfy $4<x_j+x_k\leq 3$, which is impossible. 
Also, $\widetilde{\cR}_j$ is a translation by $2\mathbf{e}_j$ of the half-open simplex $\widetilde{\Delta}_m$,
where this can be verified by, for example, observing that 
the closure~$\cR_j$ of~$\widetilde{\cR}_j$ has vertices $2\mathbf{e}_j+\mathbf{e}_i$ for $i\in\{0,1,\dots,m\}$.
Since the Ehrhart polynomial of $\widetilde{\Delta}_m$ is $\binom{t+m-1}{m}$, it follows that 
the Ehrhart polynomial of $\cP(m,2)$ is $\binom{3t+m}{m}-m\binom{t+m-1}{m}$, as required.
\end{proof} 
		
We now extend our method to $\cP(m,3)$. 

\begin{theorem} \label{theorem:Pm3}
For any $m$, the Ehrhart polynomial of $\cP(m,3)$ is
\begin{multline}
\label{eq:ehrpm3}
\ehr(\cP(m,3),t)\\
=\binom{6t+m}{m}-m\binom{3t+m-1}{m}-\binom{m}{2}\left(\binom{t+m-1}{m}+(m-2)\binom{t+m-2}{m}\right),
\end{multline}
and thus, taking $m!$ times the coefficient of $t^m$ in~$\ehr(\cP(m,3),t)$,
the normalized volume of $\cP(m,3)$ is 
\begin{equation}v(m,3)= 6^m-m\,3^m-(m-1)\binom{m}{2}.\end{equation}
\end{theorem}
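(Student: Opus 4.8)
The plan is to apply the sculpting strategy of Section~\ref{sec:volppn} with starting polytope $\cP_1=\binom{4}{2}\simp_m=6\simp_m=\{\mathbf{x}\in\RR^m\mid x_i\ge0\text{ for all }i,\ \sum_{i=1}^mx_i\le6\}$, which has Ehrhart polynomial $\binom{6t+m}{m}$, and to reach $\cP(m,3)$ --- described by~\eqref{facetdesc} with $n=3$ as $\{\mathbf{x}:0\le x_i\le3,\ x_i+x_j\le5\text{ for }i<j,\ \sum_ix_i\le6\}$ --- in two rounds of cuts: first add the $m$ inequalities $x_i\le3$ to obtain $\cP_2=\{\mathbf{x}:0\le x_i\le3\text{ for all }i,\ \sum_ix_i\le6\}$, then add the $\binom m2$ inequalities $x_i+x_j\le5$ to obtain $\cP(m,3)$. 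The key structural point is that all $m+\binom m2$ half-open pieces removed along the way are pairwise disjoint, so that, by iterating Lemma~\ref{lem:strip}, their Ehrhart polynomials simply subtract from $\binom{6t+m}{m}$. Disjointness follows from the constraints $x_\ell\ge0$ and $\sum_\ell x_\ell\le6$ alone: one cannot have $x_i>3$ and $x_j>3$; within $\cP_2$, if $x_i+x_j>5$ and $x_i+x_k>5$ with $j\ne k$ then $2x_i+x_j+x_k>10$, which together with $x_j+x_k\le6-x_i$ forces $x_i>4$, contradicting $x_i\le3$; and $x_i+x_j>5$ with $x_k+x_\ell>5$ for disjoint pairs forces $\sum_\ell x_\ell>10$. (One also checks routinely that $\cP_1,\cP_2,\cP(m,3)$ and all the pieces are lattice polytopes, so that Lemma~\ref{lem:strip} applies.)

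For the first round, the piece removed when adding $x_i\le3$ is $\{\mathbf{x}\in6\simp_m\mid x_i\ge3\}$ with its facet $\{x_i=3\}$ deleted; exactly as in the proof of Theorem~\ref{theorem:pm2}, translating by $-3\e_i$ identifies this with $3\simp_m$ minus a facet, whose Ehrhart polynomial is $\binom{3t+m}{m}-\binom{3t+m-1}{m-1}=\binom{3t+m-1}{m}$. Summing over the $m$ disjoint pieces yields the term $-m\binom{3t+m-1}{m}$.

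For the second round, fix a pair $\{i,j\}$ and apply the reflection $y_i=3-x_i$, $y_j=3-x_j$, $y_\ell=x_\ell$ for $\ell\ne i,j$. This is a unimodular change of coordinates carrying the piece $\cR=\{\mathbf{x}\in\cP_2\mid x_i+x_j\ge5\}$ to $\cQ=\{\mathbf{y}\ge\mathbf{0}\mid y_i+y_j\le1,\ \sum_{\ell\ne i,j}y_\ell\le y_i+y_j\}$ --- the bounds $y_\ell\le3$ becoming redundant --- and carrying the facet $\{x_i+x_j=5\}$ to $\cF=\cQ\cap\{y_i+y_j=1\}$. Counting lattice points of $t\cQ$ by the value $u=y_i+y_j\in\{0,1,\dots,t\}$ gives $\ehr(\cQ,t)=\sum_{u=0}^t(u+1)\binom{u+m-2}{m-2}$ (for fixed $u$ there are $u+1$ choices of $(y_i,y_j)$ and $\binom{u+m-2}{m-2}$ choices of the remaining coordinates), and likewise $\ehr(\cF,t)=(t+1)\binom{t+m-2}{m-2}$ (equivalently, $\cF\cong[0,1]\times\simp_{m-2}$ and $\cQ$ is a lattice pyramid over $\cF$, cf.\ Remark~\ref{rem:pyramid}). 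Hence the removed half-open piece $\cR\setminus\{x_i+x_j=5\}\cong\cQ\setminus\cF$ has Ehrhart polynomial $\sum_{u=0}^{t-1}(u+1)\binom{u+m-2}{m-2}$; rewriting $(u+1)\binom{u+m-2}{m-2}=(m-1)\binom{u+m-1}{m-1}-(m-2)\binom{u+m-2}{m-2}$ and using the hockey-stick identity and Pascal's rule, this sum collapses to $\binom{t+m-1}{m}+(m-2)\binom{t+m-2}{m}$. Multiplying by the $\binom m2$ pairs and subtracting both rounds from $\binom{6t+m}{m}$ produces exactly~\eqref{eq:ehrpm3}. Finally, taking $m!$ times the coefficient of $t^m$ --- which is $6^m$ from $\binom{6t+m}{m}$, $m\cdot3^m$ from $m\binom{3t+m-1}{m}$, and $\binom m2$ and $\binom m2(m-2)$ from the last two binomials --- gives $v(m,3)=6^m-m\cdot3^m-\binom m2\bigl(1+(m-2)\bigr)=6^m-m\,3^m-(m-1)\binom m2$.

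I expect the main obstacle to be the geometry of the second-round pieces: establishing cleanly, via the reflection, that each is a lattice pyramid over the prism $[0,1]\times\simp_{m-2}$ (so that its Ehrhart polynomial is the claimed binomial sum), and handling the small cases $m\le3$ where this prism and pyramid degenerate (for $m=2$ the piece is a half-open simplex, and for $m=3$ a pyramid over a unit square). The disjointness bookkeeping and the binomial collapse are routine, and the first round is essentially the $\cP(m,2)$ computation verbatim.
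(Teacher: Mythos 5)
Your proposal is correct and follows essentially the same sculpting argument as the paper: start from $6\simp_m$, remove $m$ half-open $3$-dilated simplices to impose $x_i\le 3$, then remove $\binom{m}{2}$ pairwise disjoint half-open pyramids over a prism $[0,1]\times\simp_{m-2}$ to impose $x_i+x_j\le 5$. The only (cosmetic) difference is that you compute the Ehrhart polynomial of each second-round piece by a unimodular reflection to the origin followed by a direct lattice-point count and a binomial collapse, whereas the paper identifies the closure as a lattice pyramid with base $\Pi(3,2)\times\Delta_{m-2}$ and reads off the answer from the $h^*$-vector shift of Remark~\ref{rem:pyramid}; both yield $\binom{t+m-1}{m}+(m-2)\binom{t+m-2}{m}$ per piece.
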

\begin{proof}
We construct $\cP(m,3)$ in three steps. 
These are illustrated for the case $m=3$ in Figures~\ref{fig:33exploded} and~\ref{fig:33step}.

\noindent \textbf{(1).} We first consider the polytope 
\[\cP_1=6\Delta_m=\left \{ \mathbf{x} \in \mathbb{R}^m \,\middle |\, \begin{matrix}  x_1+x_2 + \cdots +x_m \leq 6,  \\  0 \leq x_i \text{ for all } 1 \leq i \leq m \end{matrix} \right \},\]
whose Ehrhart polynomial is $\binom{6t+m}{m}$.
				
\noindent \textbf{(2).} We now add inequalities to $\cP_1$, and consider
\[\cP_2=\left \{ \mathbf{x}\in \mathbb{R}^m \,\middle |\, \begin{matrix}  x_1+x_2 + \cdots +x_m \leq 6,\\  0 \leq x_i \leq 3 \text{ for all } 1 \leq i \leq m \end{matrix} \right \}=\widehat{\Pi}(3,3,0,\dots,0),\]
where for $m=1$, we take $\widehat{\Pi}(3,3,0,\dots,0)$ to be $\widehat{\Pi}(3)$. 
Thus, $\cP_2$ is obtained from $\cP_1$ by removing~$m$
half-open polytopes $\widetilde{\cR}_1,\ldots,\widetilde{\cR}_m$,
where
\begin{align*}\widetilde{\cR}_j&=\left\{\mathbf{x}\in\mathbb{R}^m\,\middle|\,\begin{matrix}x_1+x_2 + \cdots +x_m \leq 6,\\[1.2mm]
x_i\ge0\text{ for all }i\in[m]\setminus\{j\},\ x_j>3\end{matrix}\right\}\\
&=6\Delta_m\cap\{\mathbf{x}\in\mathbb{R}^m\mid x_j> 3\},\end{align*}
for $1\le j\le m$.
These half-open polytopes are pairwise disjoint since, 
for $j\ne k$, any $\mathbf{x}\in\widetilde{\cR}_j\cap\widetilde{\cR}_k$ 
would need to satisfy $6<x_j+x_k\leq 6$, which is impossible.
Similarly to the corresponding step in the proof of Theorem \ref{theorem:pm2}, it can be seen that~$\widetilde{\cR}_j$ is a translation by $3\mathbf{e}_j$ of $3\widetilde{\Delta}_m$, and we conclude that the Ehrhart polynomial of $\cP_2$ is $\binom{6t+m}{m}-m\binom{3t+m-1}{m}$.
				
\noindent\textbf{(3).} Finally, we add the remaining inequalities and consider
\begin{equation}\label{eq:cPm3}\cP_3=\cP(m,3) = \left \{ \mathbf{x} \in \mathbb{R}^m \,\middle |\, \begin{matrix}  x_1+x_2 + \cdots +x_m \leq 6,  \\  0 \leq x_i \leq 3 \text{ for all } 1 \leq i \leq m, \\ x_i+x_j\leq 5 \text{ for all } 1\leq i<j\leq m \end{matrix} \right \}.\end{equation}
Thus, $\cP_3$ is obtained from $\cP_2$ by removing the $\binom{m}{2}$ 
half-open polytopes 
$\widetilde{\cR}_{ij}=\cP_2\cap\{\mathbf{x}\in\mathbb{R}^m \mid  x_i+x_j> 5\}$, for distinct $i,j$ in $[m]$.
To show that these half-open polytopes are pairwise disjoint, we consider two cases, as follows.
\begin{description}
\item[$\widetilde{\cR}_{ij}$ and $\widetilde{\cR}_{kl}$, with $i,j,k,l$ all different] Any $\mathbf{x}\in\widetilde{\cR}_{ij} \cap \widetilde{\cR}_{kl}$ would need to satisfy
$10 < x_i + x_j + x_k + x_l \leq 6$, which is impossible.
\item[$\widetilde{\cR}_{ij}$ and $\widetilde{\cR}_{jk}$, with $i,j,k$ all different] Any $\mathbf{x}\in\widetilde{\cR}_{ij} \cap \widetilde{\cR}_{jk}$ would need to satisfy 
$10=5+5<(x_i+x_j)+(x_j+x_k)=(x_i+x_j+x_k)+x_j\leq 6+3=9$, which gives the contradiction
$10<9$.
\end{description}

 Note that each $\widetilde{\cR}_{ij}$ is congruent to $\widetilde{\cR}_{12}=\cP_2\cap\{\mathbf{x}\in\mathbb{R}^m \mid  x_1+x_2> 5\}$. 
Using Lemma \ref{lem:strip}, we compute the closure $\cR_{12}$ of $\widetilde{\cR}_{12}$ to be congruent to  a lattice pyramid with base $\Pi(3,2)\times \Delta_{m-2}$ and apex $3\mathbf{e}_1+3\mathbf{e}_2$.
The Ehrhart polynomial of lattice pyramids is easier to handle using a binomial coefficient basis.
The Ehrhart polynomial of the base is $$\binom{t+1}{1}\binom{t+m-2}{m-2}=\binom{t+m-1}{m-1}+(m-2)\binom{t+m-2}{m-1},$$ so, by Remark~\ref{rem:pyramid}, the Ehrhart polynomial of the pyramid is $\binom{t+m}{m}+(m-2)\binom{t+m-1}{m}$.
The Ehrhart polynomial of $\cP_3$ is $\ehr(\cP_2,t)-\binom{m}{2}\left(\binom{t+m}{m}+(m-2)\binom{t+m-1}{m}-\binom{t+m-1}{m-1}-(m-2)\binom{t+m-2}{m-1}\right)$, since we removed the pyramid, but we have to replace its base.
		 
Putting everything together, we obtain the desired formula in~\eqref{eq:ehrpm3}.
\end{proof}

\begin{figure}[ht]
\includegraphics[height=2in]{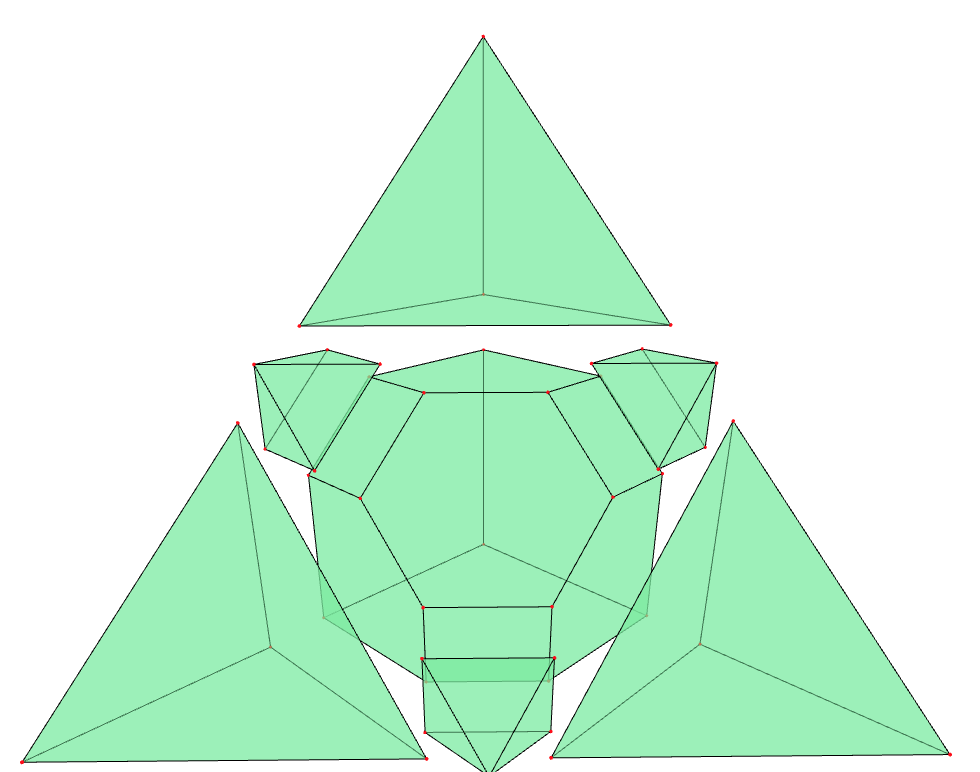}
\caption{The construction of $\cP(3,3)$ (in the center), as used in the proof of
Theorem~\ref{theorem:Pm3}.}
\label{fig:33exploded}
\end{figure}
		
\begin{figure}[ht]
\centering
\includegraphics[height=2in]{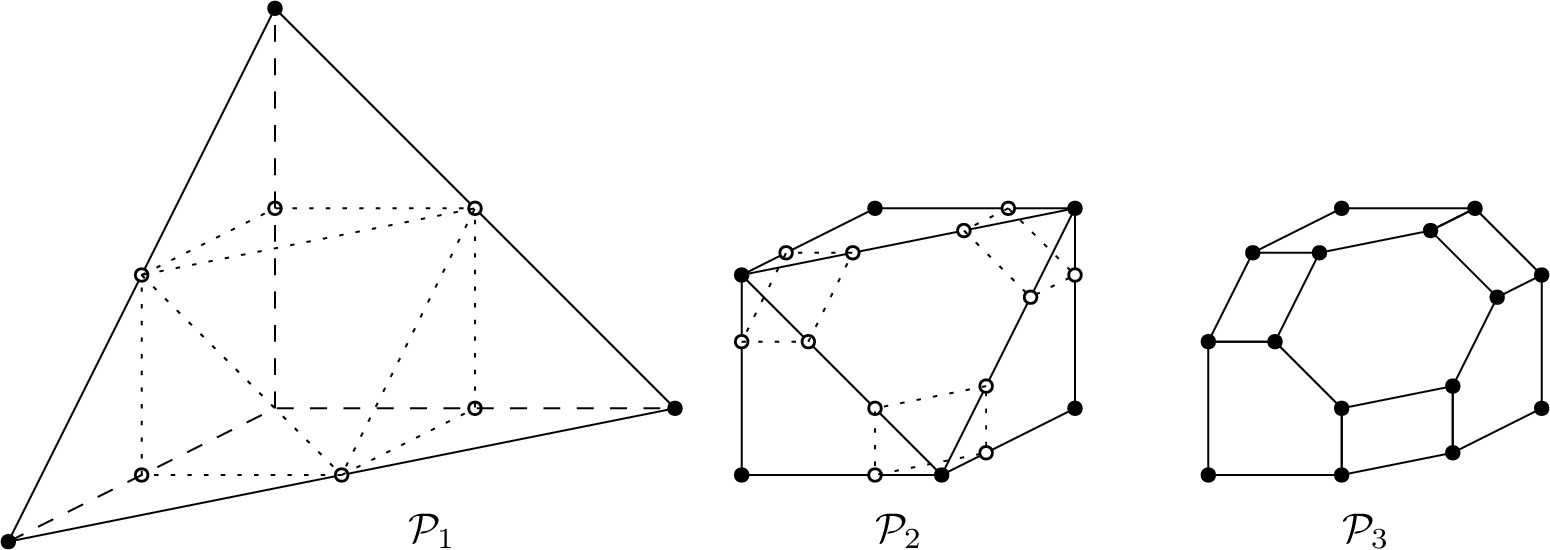}
\caption{A step-by-step illustration of the proof of Theorem \ref{theorem:Pm3}, for the
case $m=3$.}
\label{fig:33step}
\end{figure}
		
To conclude this section, we extend
our methods one step further to $\cP(m,4)$, but the proof becomes more involved as the pieces we are removing become more complicated.
In light of this, we compute only the normalized volume.
		
\begin{theorem} \label{theorem:Pm4}
For any $m$, the normalized volume of $\cP(m,4)$ is
\begin{equation}
\label{eq:Pm4}
v(m,4)=10^m-m\,6^m-\frac{m(m-1)(m-3)}{6}\,3^m-(3m^2-6m+1)\binom{m}{3}.
\end{equation}
\end{theorem}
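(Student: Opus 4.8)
The plan is to run the sculpting strategy of this section, but --- because the pieces being removed here are no longer half-open simplices or pyramids over a single facet, as in Theorems~\ref{theorem:pm2} and~\ref{theorem:Pm3} --- to track only normalized volumes, so that half-open pieces, lower-dimensional overlaps, and the distinction between a polytope and its closure may all be ignored. I would start from the description~\eqref{eq:Pm4-2},
\[\cP(m,4)=\bigl\{\mathbf{x}\in\RR^m\bigm|\ \textstyle\sum_{i=1}^m x_i\le 10,\ \ 0\le x_i\le 4,\ \ x_i+x_j\le 7,\ \ x_i+x_j+x_k\le 9\bigr\},\]
and build a chain $\cP_1\supseteq\cP_2\supseteq\cP_3\supseteq\cP_4=\cP(m,4)$ with $\cP_1=\binom{5}{2}\simp_m=10\,\simp_m$ (so $\nvol(\cP_1)=10^m$), where $\cP_2$ imposes all the inequalities $x_i\le 4$, then $\cP_3$ imposes all $x_i+x_j\le 7$, and finally $\cP_4$ imposes all $x_i+x_j+x_k\le 9$. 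Each family is added one inequality at a time using Lemma~\ref{lem:strip}, so that at every step the removed set $\cQ\setminus\cF$ is cut off the \emph{current} polytope and $\nvol$ drops by $\nvol(\cQ)$.

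The first point is the interaction between cuts within a single family. For the family $x_i\le 4$ the cuts genuinely overlap (two coordinates exceeding $4$ is compatible with $\sum x_i\le 10$), but three coordinates cannot all exceed $4$; adding $x_1\le 4,\dots,x_m\le 4$ in order, the $k$-th removed closure is $10\,\simp_m\cap\{x_1\le 4,\dots,x_{k-1}\le 4,\ x_k\ge 4\}$, which is (after translation) $6\,\simp_m$ with $k-1$ of the constraints $x_i>4$ deleted, of normalized volume $6^m-(k-1)2^m$; summing over $k$ gives $\nvol(\cP_2)=10^m-m\,6^m+\binom{m}{2}2^m$, and in fact $\cP_2=\widehat{\Pi}(4,4,2,0,\dots,0)$ by Proposition~\ref{prop-vertices-edges}. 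For the families $x_i+x_j\le 7$ and $x_i+x_j+x_k\le 9$, the saving observation is that any two distinct cuts meet only inside the hyperplane $\sum x_i=10$, hence in volume zero: $x_i+x_j\ge 7$ together with $x_k+x_l\ge 7$ (with $\{i,j\}\ne\{k,l\}$) forces $\sum x_i\ge 10$ via $x_r\le 4$, and $x_i+x_j+x_k\ge 9$ together with $x_{i'}+x_{j'}+x_{k'}\ge 9$ (with $\{i,j,k\}\ne\{i',j',k'\}$) forces $\sum x_i\ge 10$ via $x_r\le 4$ and $x_r+x_s\le 7$. So the last two families each contribute exactly $\binom{m}{2}$, resp.\ $\binom{m}{3}$, copies of one model piece.

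It remains to measure the two model pieces. For the pair family, $\cQ=\cP_2\cap\{x_1+x_2\ge 7\}$ has $x_1,x_2\in[3,4]$ and $\sum_{i\ge 3}x_i\le 10-x_1-x_2$; the substitution $u_i=4-x_i$ turns it into $\{u_1,u_2\ge 0,\ u_1+u_2\le 1,\ x_3,\dots,x_m\ge 0,\ \sum_{i\ge 3}x_i\le 2+u_1+u_2\}$, a fibration over the triangle $\{u_1+u_2\le 1\}$ with simplex fibers, whence $\nvol(\cQ)=m(m-1)\int_0^1(2+s)^{m-2}s\,ds=(m-3)3^{m-1}+2^m$; subtracting $\binom{m}{2}$ copies cancels the $2^m$ and leaves $\nvol(\cP_3)=10^m-m\,6^m-\tfrac{m(m-1)(m-3)}{6}\,3^m$. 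For the triple family, $\cQ=\cP_3\cap\{x_1+x_2+x_3\ge 9\}$ is a fibration over the planar region $R=\{0\le x_i\le 4,\ x_i+x_j\le 7,\ 9\le x_1+x_2+x_3\le 10\}$ with fibers $(10-x_1-x_2-x_3)\,\simp_{m-3}$. Slicing $R$ by the value of $\sigma=x_1+x_2+x_3$ and writing $\tau=10-\sigma\in[0,1]$ reduces the computation to $\int_0^1\tau^{m-3}p(\tau)\,d\tau$, where $p(\tau)$ is the area of the slice of $R$ at that level --- a triangle with its three corners truncated by the planes $x_i+x_j=7$, i.e.\ a hexagon. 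I would expect $p(\tau)=\tfrac12(1+4\tau+\tau^2)$, which gives $\nvol(\cQ)=m(m-1)(m-2)\cdot\tfrac12\bigl(\tfrac{1}{m-2}+\tfrac{4}{m-1}+\tfrac{1}{m}\bigr)=3m^2-6m+1$; subtracting $\binom{m}{3}$ copies then yields~\eqref{eq:Pm4}.

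The main obstacle will be this last model piece: unlike the $n\le 3$ cases, the pieces removed from $\cP_3$ are genuine fibrations of a polygon by simplices of varying size rather than honest pyramids, so their normalized volumes require the slicing argument and, in particular, the exact slice-area polynomial $p(\tau)$, which depends on tracking the facet structure of the hexagonal slice as $\tau$ varies over $[0,1]$. One must also be careful with the two within-family overlap analyses (the genuine $\binom{m}{2}2^m$ inclusion--exclusion correction for $\cP_2$, and the measure-zero claims for $\cP_3$ and $\cP_4$), to be sure no further correction terms are missed.
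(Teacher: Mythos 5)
Your proposal is correct, and its skeleton is exactly the paper's: the same chain $\cP_1=10\,\Delta_m\supseteq\cP_2=\widehat{\Pi}(4,4,2,0,\dots)\supseteq\cP_3=\widehat{\Pi}(4,3,3,0,\dots)\supseteq\cP_4=\cP(m,4)$, the same counts $m$, $\binom{m}{2}$, $\binom{m}{3}$ of removed pieces, and the same intermediate volumes (your $(m-3)3^{m-1}+2^m$ for the pair piece is the paper's $2^m-3^m+m\,3^{m-1}$ from Lemma~\ref{lem:aux1}, and your $3m^2-6m+1$ for the triple piece is Lemma~\ref{lem:aux2}). Where you genuinely diverge is in how the two model pieces are measured: the paper cones each piece from a chosen vertex over its far facets (Lemma~\ref{lem:coning}), identifies those facets as products of small permutohedra with simplices, and either solves a recurrence in $m$ (Lemma~\ref{lem:aux1}) or sums distance-times-facet-volume contributions (Lemma~\ref{lem:aux2}); you instead fiber each piece over a low-dimensional base with dilated-simplex fibers and integrate. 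Your integrals check out: for the pair piece, $m(m-1)\int_0^1(2+s)^{m-2}s\,ds=(m-3)3^{m-1}+2^m$; for the triple piece, the slice at $\tau=10-\sigma$ is, in the coordinates $y_i=4-x_i$, the simplex $\{y\ge0,\ y_1+y_2+y_3=2+\tau\}$ truncated by $y_i\le1+\tau$, whose projection onto $(y_1,y_2)$ loses three disjoint unit corner triangles, giving exactly your $p(\tau)=\tfrac12\bigl((2+\tau)^2-3\bigr)=\tfrac12(1+4\tau+\tau^2)$ and hence $3m^2-6m+1$. Your approach buys a shorter, more mechanical computation (no vertex/facet bookkeeping, no recurrence), and your explicit verification that distinct pair cuts and distinct triple cuts overlap only in measure zero (being forced into $\sum_i x_i\ge10$ by the already-imposed inequalities) makes rigorous a point the paper's proof leaves implicit when it subtracts $\binom{m}{2}$ and $\binom{m}{3}$ copies without inclusion--exclusion corrections. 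The only item you flagged as uncertain, the slice-area polynomial $p(\tau)$, is correct as stated, so there is no gap.
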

		
\begin{proof}
We construct $\cP(m,4)$ in four steps.
		
\noindent\textbf{(1).} We first consider
 $$\cP_1=10\Delta_m=\left \{ \mathbf{x} \in \mathbb{R}^m \,\middle |\, \begin{matrix}  x_1+x_2 + \cdots +x_m \leq 10,  \\  0 \leq x_i \text{ for all } 1 \leq i \leq m\end{matrix} \right \},$$ 
which has normalized volume $10^m$.
			
\noindent\textbf{(2).}  We next consider $$\cP_2= \left \{ \mathbf{x} \in \mathbb{R}^m \,\middle |\, \begin{matrix}  x_1+x_2 + \cdots +x_m \leq 10, \\  0 \leq x_i \leq 4 \text{ for all } 1 \leq i \leq m\end{matrix} \right \}=\widehat{\Pi}(4,4,2,0,0,0,\dots),$$
where the first $m$ terms of $4,4,2,0,0,0,\dots$ are used.
Thus,~$\cP_2$ is obtained from~$\cP_1$ by removing~$m$ half-open polytopes, each congruent to $\widetilde{\cQ}_1= \cP_1\cap\{\mathbf{x}\in\mathbb{R}^m\mid x_1> 4\}$.
The closure $\cQ_1$ of $\widetilde{\cQ}_1$ is congruent to $6\Delta_m$, and hence has normalized volume $6^m$.
However, we need to replace the $\binom{m}{2}$ pairwise intersections of the removed pieces, each of which is congruent to $\widetilde{\cT}_1=\cP_1\cap\{\mathbf{x}\in\mathbb{R}^m\mid x_1>4,\, x_2>4\}$.
It can be seen that the closure $\cT_1$ of $\widetilde{\cT}_1$ is congruent to $2\Delta_m$, and hence has normalized volume $2^m$.
It can also be seen that are no triple intersections among the removed pieces
(since any~$\mathbf{x}$ in such an intersection would need to satisfy
$12<x_i+x_j+x_k\le10$ for some distinct $i,j,k$ in $[m]$),
so we conclude that $\cP_2$ 
has normalized volume $10^m-m\,6^m+\binom{m}{2}\,2^m$.
			
\noindent\textbf{(3).}  We now consider 
$$ \cP_3= \left \{ \mathbf{x} \in \mathbb{R}^m \,\middle |\, \begin{matrix}  x_1+x_2 + \cdots +x_m \leq 10,\\  0 \leq x_i \leq 4 \text{ for all } 1 \leq i \leq m,\\ x_i+x_j\leq 7 \text{ for all } 1\leq i<j\leq m \end{matrix} \right \}=\widehat{\Pi}(4,3,3,0,0,0,\dots),$$
where the first $m$ terms of $4,3,3,0,0,0,\ldots$ are used.
Thus, $\cP_3$ is obtained from~$\cP_2$ by removing $\binom{m}{2}$ half-open polytopes, each congruent to $$\widetilde{\cQ}_2=\cP_2\cap\{\mathbf{x}\in\mathbb{R}^m\mid x_1+x_2> 7\}.$$
That the removed pieces are pairwise disjoint follows from an argument analogous to 
that used in step~(3) of the proof of Theorem \ref{theorem:Pm3}.
Using Lemma \ref{lem:strip}, we can obtain a description of the closure~$\cQ_2$ of~$\widetilde{\cQ}_2$.
Consider \[\Delta_{m-2}=\textrm{ConvexHull}(\{\mathbf{e}_0,\mathbf{e}_3,\mathbf{e}_4,\dots,\mathbf{e}_m\}).\]
Then $\cQ_2$ is the convex hull of the union of $4\mathbf{e}_1+4\mathbf{e}_2+2\Delta_{m-2}$, $4\mathbf{e}_1+3\mathbf{e}_2+3\Delta_{m-2}$ and $3\mathbf{e}_1+4\mathbf{e}_2+3\Delta_{m-2}$. 
By Lemma \ref{lem:aux1}, this has normalized volume $2^m-3^m+m\,3^{m-1}$, and
so $\cP_3$ has normalized volume $10^m-m\,6^m+\binom{m}{2}\,2^m-\binom{m}{2}(2^m-3^m+m\,3^{m-1})=10^m-m\,6^m+\binom{m}{2}\,(3^m-m\,3^{m-1})$.

\noindent\textbf{(4).}  Finally, we consider
\begin{equation}\label{eq:Pm4-2}
\cP_4=\cP(m,4) = \left \{ \mathbf{x}\in \mathbb{R}^m \,\middle |\, \begin{matrix}  x_1+x_2 + \cdots +x_m \leq 10,\\  0 \leq x_i \leq 4 \text{ for all } 1 \leq i \leq m,\\ x_i+x_j\leq 7 \text{ for all } 1\leq i<j\leq m,\\ x_i+x_j+x_k \leq 9 \text{ for all } 1\leq i<j<k\leq m \end{matrix} \right \}.\end{equation}
Thus, $\cP_4$ is obtained from~$\cP_3$ by removing the $\binom{m}{3}$ half-open polytopes
$\widetilde{\cR}_{ijk}=\cP_3\cap\{\mathbf{x}\in\mathbb{R}^m\mid x_i+x_j+x_k>9\}$, for distinct
$i,j,k$ in $[m]$.
That these pieces are pairwise disjoint again follows from an argument analogous to
that used in step~(3) of the proof of Theorem~\ref{theorem:Pm3}.
(In fact, there is now one more case to consider since, for pieces $\widetilde{\cR}_{i_1j_1k_1}$ and
$\widetilde{\cR}_{i_2j_2k_2}$, the intersection of $\{i_1,j_1,k_1\}$ and
$\{i_2,j_2,k_2\}$ can be of size~0,~1 or~2. However, for the case 
in which this intersection is of size~2, contradictory inequalities can again be found
for the entries of any point in $\widetilde{\cR}_{i_1j_1k_1}\cap\widetilde{\cR}_{i_2j_2k_2}$.)  It can be seen that each $\widetilde{\cR}_{ijk}$ is congruent to $\widetilde{\cR}_{123}$,
and using Lemma~\ref{lem:strip}, we can obtain a vertex description of the closure $\cR_{123}$ 
of $\widetilde{\cR}_{123}$. These vertices consist of  $4\mathbf{e}_1+3\mathbf{e}_2+3\mathbf{e}_3,$ $3\mathbf{e}_1+4\mathbf{e}_2+3\mathbf{e}_3$ and $3\mathbf{e}_1+3\mathbf{e}_2+4\mathbf{e}_3$, and the vertices of $\Pi(4,3,2)\times\Delta_{m-3}$, with $\Delta_{m-3}
=\textrm{ConvexHull}(\{\mathbf{e}_0,\mathbf{e}_4,\mathbf{e}_5,\dots,\mathbf{e}_m)\})$.
By Lemma~\ref{lem:aux2},~$\cR_{123}$ has normalized volume $3m^2-6m+1$.

Putting everything together now gives the desired volume formula in~\eqref{eq:Pm4}.
\end{proof}

Having calculated $v(m,n)$ for $n\le4$, a natural open problem remains.
\begin{oproblem}
Find $v(m,n)$ for all $m$ and $n$ with $n>4$.
\end{oproblem}

Extrapolating from our results for $n\leq 4$, it is natural to conjecture
that $v(m,n)$ can be expressed in a certain form, as follows.
\begin{conjecture}\label{conj-vmn}
We conjecture that 
\begin{multline}\label{eq-conj-vmn}
v(m,n)=\binom{n+1}{2}^m-\binom{m}{1}\,\binom{n}{2}^m-p_{n,1}(m)\,\binom{m}{2}\,\binom{n-1}{2}^{m-1}
-p_{n,2}(m)\,\binom{m}{3}\,\binom{n-2}{2}^{m-2}\\
-\ldots-p_{n,n-2}(m)\,\binom{m}{n-1}\,\binom{2}{2}^{m-n+2},\end{multline}
where $p_{n,i}(m)$ is a polynomial in $m$ of
degree $i$, with positive leading coefficient.
Note that the first two terms on the right hand side of~\eqref{eq-conj-vmn} can be regarded
as arising from the first two steps of the sculpting method.
\end{conjecture}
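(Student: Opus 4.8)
The natural approach is to carry out the sculpting of Theorems~\ref{theorem:pm2}, \ref{theorem:Pm3} and~\ref{theorem:Pm4} for arbitrary $n$. Start from the dilated simplex $\binom{n+1}{2}\Delta_m$, of normalized volume $\binom{n+1}{2}^m$, and impose the facet inequalities $\sum_{i\in S}x_i\le\binom{n+1}{2}-\binom{n+1-|S|}{2}$ of Proposition~\ref{p-facetdesc} one size at a time, in the order $|S|=1,2,\dots,n-1$ (for $m<n$ the larger-size constraints are vacuous or redundant, as observed before Theorem~\ref{theorem:pm2}, and for $m\ge n$ the $|S|=m$ constraint is already the defining simplex inequality). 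Writing $\cP^{(k)}$ for the polytope obtained after imposing all constraints with $|S|\le k$, so that $\cP^{(0)}=\binom{n+1}{2}\Delta_m$ and $\cP(m,n)=\cP^{(n-1)}$, the passage from $\cP^{(k-1)}$ to $\cP^{(k)}$ deletes the $\binom{m}{k}$ caps $\cQ_S=\cP^{(k-1)}\cap\{\sum_{i\in S}x_i\ge\binom{n+1}{2}-\binom{n+1-k}{2}\}$; since the facets one replaces are lower-dimensional, Lemma~\ref{lem:strip} and the inclusion--exclusion identity~\eqref{eq:ie-general} give $\nvol(\cP^{(k)})=\nvol(\cP^{(k-1)})-\sum_{\mathcal T}(-1)^{|\mathcal T|+1}\nvol\bigl(\bigcap_{S\in\mathcal T}\cQ_S\bigr)$, the sum running over nonempty families $\mathcal T$ of $k$-element subsets of $[m]$. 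Iterating expresses $v(m,n)$ as $\binom{n+1}{2}^m$ minus an alternating sum of normalized volumes of caps and cap-intersections. The first two terms of the conjectured formula already fall out of stage~$1$: translating the $i$-th coordinate down by $n$ identifies each $\cQ_{\{i\}}$ with $\bigl(\binom{n+1}{2}-n\bigr)\Delta_m=\binom{n}{2}\Delta_m$, and there are $m$ of these, which produces $\binom{n+1}{2}^m-m\binom{n}{2}^m$ (an $r$-fold intersection at stage~$1$ being the dilated simplex $\bigl(\binom{n+1}{2}-rn\bigr)\Delta_m$).

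What the structure should give, and why the stated degrees are plausible, is the following. One expects that for $k\ge2$ the cap $\cQ_S$ is lattice-congruent to a finite union, of size independent of $m$, of lattice pyramids in the sense of Remark~\ref{rem:pyramid} over Cartesian products $\Pi(\mathbf a)\times\binom{n-k+1}{2}\Delta_{m-k}$ with $\mathbf a$ a fixed vector of length $k$; its ``leading'' pyramid then has, by Remark~\ref{rem:pyramid} and the product rule $\nvol(\cP_1\times\cP_2)=\binom{\dim\cP_1+\dim\cP_2}{\dim\cP_1}\nvol(\cP_1)\,\nvol(\cP_2)$, normalized volume $\binom{m-1}{k-1}\,\nvol\bigl(\Pi(\mathbf a)\bigr)\,\binom{n-k+1}{2}^{\,m-k}$, i.e.\ a degree-$(k-1)$ polynomial in $m$ times $\binom{n-k+1}{2}^{m}$, while the remaining pyramids in the union contribute lower-degree polynomial terms together with further, spurious, powers $c^m$. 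Summing over the $\binom{m}{k}$ caps of size $k$ then contributes to the coefficient of $\binom{n-k+1}{2}^m$ a polynomial in $m$ of degree $k+(k-1)=2k-1=2(k-1)+1$, whose leading coefficient has a definite sign since $\nvol(\Pi(\mathbf a))>0$; this is precisely the assertion that $p_{n,k-1}(m)$ has degree $2(k-1)+1$ with positive leading coefficient. The explicit base polytopes for $k=2,3$ are those appearing in the auxiliary lemmas~\ref{lem:aux1} and~\ref{lem:aux2}, and the general case would require their analogues.

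The main obstacle, and the reason the authors note that proceeding beyond $n=4$ by these methods is impractical, is that the sculpting genuinely loses its grip for larger $n$. Already the intermediate polytopes $\cP^{(k)}$ fail to be anti-blocking polytopes of the tractable form $\widehat{\Pi}(\mathbf z)$ to which Proposition~\ref{prop-vertices-edges} applies: for $n=5$ the polytope $\cP^{(2)}=\{\mathbf x\ge\mathbf 0:\sum_ix_i\le15,\ x_i\le5,\ x_i+x_j\le9\}$ has vertices such as $\bigl(\tfrac{9}{2},\tfrac{9}{2},\tfrac{9}{2},0,\dots,0\bigr)$, so one is forced to control the vertex and edge structure of a much larger class of anti-blocking polytopes; moreover the caps $\cQ_S$ of a fixed size $k\ge2$ now overlap in full dimension, and the cross-stage intersections proliferate. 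Worst of all, the intermediate stages introduce spurious exponential bases $c^m$ --- for instance $\bigl(\binom{n+1}{2}-rn\bigr)^m$ with $r\ge2$ at stage~$1$, and analogous non-triangular bases at every later stage --- and the conjecture asserts that in the final alternating sum all of these cancel, leaving exactly the bases $\binom{2}{2}^m,\binom{3}{2}^m,\dots,\binom{n+1}{2}^m$. Proving this cancellation, and simultaneously confirming that after all stages' contributions to a given base $\binom{n-k+1}{2}^m$ are combined the surviving coefficient still has degree exactly $2k-1$ with positive leading coefficient, seems to need a structural input beyond direct bookkeeping.

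Two complementary lines of attack might supply such an input. One is to bypass the caps and prove directly that, for fixed $n$, the sequence $m\mapsto v(m,n)$ satisfies a linear recurrence with characteristic polynomial $\prod_{j=2}^{n+1}\bigl(X-\binom{j}{2}\bigr)^{d_j}$ where $d_{n+1}=1$ and $d_{n-i}=2(i+1)$ for $0\le i\le n-2$ --- equivalently, that the ordinary generating function $\sum_{m\ge0}v(m,n)\,z^m$ is rational with denominator $\prod_{j=2}^{n+1}\bigl(1-\binom{j}{2}z\bigr)^{d_j}$; any such result yields the conjectured form with $\deg p_{n,i}\le 2i+1$ at once, leaving only the non-vanishing and positivity of the leading coefficients. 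The other is to exploit the anti-blocking/polymatroid perspective on $\cP(m,n)$ (Corollary~\ref{cor-anti-block} and~\cite{black2022flag}), together with suitable Minkowski-sum decompositions and Postnikov's volume formulas (as already used in~\eqref{eq-vol-drac-seq} for $n\ge m-1$), to obtain an expression for $v(m,n)$ in which the triangular-number powers appear transparently. Either route, however, looks to be of difficulty comparable to the conjecture itself.
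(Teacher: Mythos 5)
The statement you are addressing is Conjecture~\ref{conj-vmn}; the paper offers no proof of it, only the one-line observation that the first two terms of~\eqref{eq-conj-vmn} arise from the first two sculpting steps. Your proposal is likewise not a proof, and you are candid about this. What you have written is an extended and essentially correct heuristic: stage~1 of the sculpting does account for $\binom{n+1}{2}^m-m\binom{n}{2}^m$ exactly as the paper indicates, your degree count $\binom{m}{k}\times(\text{degree }k-1)$ correctly reproduces $\deg p_{n,k-1}=2(k-1)+1$ in the known cases $n=3,4$ (e.g.\ $p_{4,1}(m)=\tfrac{m(m-1)(m-3)}{6}$ has degree $3$ and $p_{4,2}(m)=(3m^2-6m+1)\binom{m}{3}$ has degree $5$), and you correctly identify the cancellation of spurious bases (such as the $\binom{m}{2}2^m$ from pairwise cap intersections at stage~1 of the $n=4$ computation, which is absorbed by the stage-2 contribution $\binom{m}{2}(2^m-3^m+m\,3^{m-1})$) as the crux.

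The genuine gaps, which you name but do not close, are these. First, the structural claim that each cap $\cQ_S$ with $|S|=k\ge2$ decomposes into a bounded (independent of $m$) union of lattice pyramids over $\Pi(\mathbf a)\times\binom{n-k+1}{2}\Delta_{m-k}$ is unproven, and your own observation that $\cP^{(2)}$ for $n=5$ has non-lattice vertices such as $(\tfrac92,\tfrac92,\tfrac92,0,\ldots,0)$ shows the intermediate polytopes need not even be lattice polytopes, so the lattice-pyramid and Ehrhart machinery of Remark~\ref{rem:pyramid} and Lemma~\ref{lem:strip} does not apply as stated; one would at minimum have to work with half-integral dilations or a finer subdivision. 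Second, the assertion that after full inclusion--exclusion every base other than $\binom{2}{2},\ldots,\binom{n+1}{2}$ cancels, and that the surviving coefficient of $\binom{n-k+1}{2}^m$ has degree exactly $2k-1$ in $m$ with positive leading coefficient, is stated as an expectation with no mechanism supplied; lower-order pyramids in your decomposition could a priori raise the degree once multiplied by $\binom{m}{k}$, and nothing rules out cancellation of the putative leading term across stages. Your two suggested alternative routes (a linear recurrence in $m$ with characteristic polynomial $\prod_j(X-\binom{j}{2})^{d_j}$, or a Postnikov-style formula in which triangular-number powers appear transparently) are sensible, but, as you say, neither is carried out. In short: your account matches and elaborates the paper's motivation for the conjecture, but the conjecture remains open on both sides.
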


\section{Ehrhart polynomial of the partial permutohedron~$\cP(m,n)$ with $n\ge m-1$}\label{sec:Ehrhart}
We now return to a consideration of $\cP(m,n)$ with $n\geq m-1$, as in Section~\ref{sec:volppm}, and obtain results and a conjecture for its Ehrhart polynomial.

In Section~\ref{sec:Ehr4}, we use a sculpting approach to compute explicitly the Ehrhart polynomial of~$\cP(m,n)$ with $m\le4$ and $n\ge m-1$.  

In Section~\ref{ssec-gpformulae}, we use the Minkowski sum decomposition~\eqref{eq-Minkowski2}, and a result 
for generalized permutohedra~\cite[Theorem~11.3]{postnikov}, to obtain in~\eqref{eq-Ehr-drac-seq} an expression for 
the Ehrhart polynomial of~$\cP(m,n)$ with $n\ge m-1$, as a sum over certain sequences.  
Various general properties can be deduced from this expression.  For example, it follows that 
$\ehr(\cP(m,n),t)$ with $n\ge m-1$ has (as a polynomial in~$t$ of degree~$m$) coefficients which are all positive (for fixed~$m$ and~$n$), and that it is also a polynomial in~$n$ of degree~$m$.
It was observed by Heuer and Striker~\cite[Remark~5.31]{HS} that all the coefficients of $\ehr(\cP(m,n),t)$ (as a polynomial in~$t$) are positive for any fixed $m,n\le7$.  Hence, we now have a proof that this property holds for all $n\ge m-1$, but we still lack a proof
for all $n<m-1$.

In Section~\ref{sec-further}, we conjecture an explicit formula for 
the Ehrhart polynomial of~$\cP(m,n)$ with $n\ge m-1$.

\subsection{Ehrhart polynomial of~$\cP(m,n)$ with $m\le4$ and $n\ge m-1$}\label{sec:Ehr4}
We now obtain explicit expressions for the Ehrhart polynomial of $\cP(m,n)$ for fixed $m\le 4$ and arbitrary $n\geq m-1$.

For $m=1$ and any $n$, the Ehrhart polynomial of $\cP(1,n)$ is 
\begin{equation}\label{eq:ehrp1n}
\ehr(\cP(1,n),t)=nt+1,\end{equation}
since $\cP(1,n)$ is the line segment $[0,n]$.

For $m=2$ and any $n$, the Ehrhart polynomial of $\cP(2,n)$ is 
\begin{equation}\label{eq:ehrp2n}
\ehr(\cP(2,n),t)=(n^2-1/2)\,t^2+(2n-1/2)\,t+1,
\end{equation}
where this could be obtained, as a very simple application of the sculpting strategy, 
by constructing $\cP(2,n)$ as a square $[0,n]^2$
(which has Ehrhart polynomial $(nt+1)^2$), from which a 
half-open triangle $\textrm{ConvexHull}(\{(n-1,n),\,(n,n-1),\,(n,n)\})\setminus \textrm{ConvexHull}(\{(n-1,n),\,(n,n-1)\})$ (which has
Ehrhart polynomial $\binom{t+1}{2}$) has been removed.

We now continue to use the sculpting strategy to obtain the Ehrhart polynomial
of $\cP(m,n)$ for $m=3$ and $m=4$ with $n\ge\binom{m}{2}$, by removing pieces from the $n$-dilated unit $m$-cube $[0,n]^m$.  
The reason for the restriction $n\ge\binom{m}{2}$ is related to the fact  that the second step of the sculpting process involves the removal
of a $\binom{m}{2}$-dilated standard $m$-simplex from $[0,n]^m$.
Nevertheless, the Ehrhart polynomials for the remaining four cases of $m=3$ and $m=4$ with $m-1\le n<\binom{m}{2}$
(i.e., $\cP(3,2)$, $\cP(4,3)$, $\cP(4,4)$ and $\cP(4,5)$) can be computed separately, and 
are found to match the expressions obtained for $n\ge\binom{m}{2}$.

\begin{theorem}\label{thm:p3n}
For any $n\geq 2$, the Ehrhart polynomial of $\cP(3,n)$ is
\begin{equation}\label{eq:ehrp3n}
\ehr(\cP(3,n),t)=
\left(n^3-3n/2-1\right)t^3+\left(3n^2-3n/2-3/2\right)t^2+\left(3n-3/2\right)t+1.\end{equation}
\end{theorem}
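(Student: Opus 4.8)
The plan is to compute $\ehr(\cP(3,n),t)$ by the sculpting method of Section~\ref{sec:volppn}, starting from the $n$-dilated cube and successively imposing the remaining facet inequalities. The case $n=2$ is already covered by Theorem~\ref{theorem:pm2}, whose formula $\ehr(\cP(3,2),t)=\binom{3t+3}{3}-3\binom{t+2}{3}$ agrees with the value of the right-hand side of~\eqref{eq:ehrp3n} at $n=2$, so from now on I assume $n\ge 3$. In that range Proposition~\ref{p-facetdesc} gives $\cP(3,n)=\{\mathbf{x}\in\RR^3\mid 0\le x_i\le n\text{ for all }i,\ x_i+x_j\le 2n-1\text{ for all }i<j,\ x_1+x_2+x_3\le 3n-3\}$. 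I set $\cP_1=[0,n]^3$, then $\cP_2=\cP_1\cap\{x_1+x_2+x_3\le 3n-3\}$, and obtain $\cP_3,\cP_4,\cP_5$ by intersecting successively with $\{x_1+x_2\le 2n-1\}$, $\{x_1+x_3\le 2n-1\}$ and $\{x_2+x_3\le 2n-1\}$, so that $\cP_5=\cP(3,n)$. At each step Lemma~\ref{lem:strip} expresses the new Ehrhart polynomial as the previous one minus $\ehr(\cQ_i,t)-\ehr(\cF_i,t)$, where $\cQ_i$ is the closed removed piece and $\cF_i$ the facet along which it is attached; since $\ehr(\cP_1,t)=(nt+1)^3$, it only remains to identify the $\cQ_i$ and $\cF_i$ and sum up.

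The first cut is immediate: with $y_i=n-x_i$, the removed piece $\cQ_1=\cP_1\cap\{x_1+x_2+x_3\ge 3n-3\}$ becomes $\{\mathbf{y}\ge\mathbf{0},\ y_1+y_2+y_3\le 3\}$, the upper bounds $y_i\le n$ being redundant since $n\ge 3$; thus $\cQ_1$ is unimodularly equivalent to $3\Delta_3$ and $\cF_1$ to $3\Delta_2$, so $\ehr(\cQ_1,t)=\binom{3t+3}{3}$ and $\ehr(\cF_1,t)=\binom{3t+2}{2}$. For each pairwise cut I would replace the two coordinates occurring in the new inequality by their complements $n-x_i$ and keep the third; a short check --- in which one observes that on the portion being removed the previously-imposed inequalities other than the triple-sum one are no longer binding --- shows that in all three cases the removed piece is unimodularly equivalent to the single ``tent'' polytope $\cT_n=\{(u,v,w)\in\RR^3\mid u,v\ge 0,\ u+v\le 1,\ 0\le w\le n-3+u+v\}$, and that the attaching facet is unimodularly equivalent to the rectangle $[0,1]\times[0,n-2]$. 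The fact that all three pairwise pieces coincide up to lattice equivalence is what keeps the calculation finite.

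It then remains to compute $\ehr(\cT_n,t)$. I would write $\cT_n$ as the union of the prism $\{u,v\ge 0,\ u+v\le 1,\ 0\le w\le n-3\}$ and the wedge lying above the plane $w=n-3$, the two pieces meeting in a unit triangle; translating the wedge down to $w=0$ turns it into the pyramid with apex $\mathbf{0}$ over the parallelogram with vertices $(1,0,0),(0,1,0),(1,0,1),(0,1,1)$, which splits along a diagonal into two unimodular simplices meeting in a unimodular triangle. Inclusion--exclusion then gives $\ehr(\cT_n,t)$ as an explicit combination of binomial coefficients, and substituting everything into $\ehr(\cP(3,n),t)=(nt+1)^3-\bigl(\ehr(\cQ_1,t)-\ehr(\cF_1,t)\bigr)-3\bigl(\ehr(\cT_n,t)-\ehr([0,1]\times[0,n-2],t)\bigr)$ and simplifying yields~\eqref{eq:ehrp3n}.

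The main obstacle is the middle step: correctly describing the removed pieces as the sculpting proceeds. For each pairwise cut one must verify precisely which earlier inequalities remain binding on the portion being chopped off --- this is what produces the uniform ``tent'' description and, crucially, the congruence of the three pairwise pieces --- and one must confirm that every small polytope appearing in the decomposition of $\cT_n$ (the prism, the two simplices, and the triangles and rectangle along which pieces are glued) is lattice-equivalent to a standard polytope, so that its Ehrhart polynomial is the expected binomial coefficient rather than one with a nontrivial $h^*$-vector. Once these identifications are pinned down, only routine polynomial algebra remains.
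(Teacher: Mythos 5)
Your proposal is correct and follows essentially the same sculpting argument as the paper: start from $[0,n]^3$, cut the corner simplex along $x_1+x_2+x_3=3n-3$, then remove three congruent pieces along the planes $x_i+x_j=2n-1$ via Lemma~\ref{lem:strip}, after checking that the earlier pairwise inequalities are not binding on each removed piece. The only differences are cosmetic: you dispatch $n=2$ via Theorem~\ref{theorem:pm2} instead of a direct computation, and you decompose each ``tent'' as a height-$(n-3)$ prism plus a wedge rather than the paper's height-$(n-2)$ prism minus a half-open simplex; both decompositions yield the same Ehrhart polynomial $\binom{t+2}{2}((n-2)t+1)-\binom{t+2}{3}$ for the closed piece.
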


\begin{proof}
The Ehrhart polynomial of $\cP(3,2)$ can be computed individually, for example using SageMath, as $4t^3+15/2\,t^2+9/2\,t+1$,
which matches~\eqref{eq:ehrp3n}.

We now compute the Ehrhart polynomial of $\cP(3,n)$ with $n\ge3$ in three steps, as illustrated in Figure~\ref{fig:thm:p3n}.  (Note that, for $n=2$, the equation $\cP_2=\widehat{\Pi}(n,n,n-3)$ in step~(2), and certain subsequent statements, would no longer hold.)

\noindent\textbf{(1).} We begin with an $n$-dilated unit cube,
$$\cP_1=\left \{ \mathbf{x} \in \mathbb{R}^3 \mid  0 \leq x_i\leq n \text{ for all } 1 \leq i \leq 3\right \},$$
which has Ehrhart polynomial $(nt+1)^3$.

\noindent\textbf{(2).} Next, we add one inequality, and consider  $$\cP_2=\left \{ \mathbf{x} \in \mathbb{R}^3 \,\middle |\, \begin{matrix}   0 \leq x_i \leq n \text{ for all } 1 \leq i \leq 3,\\x_1+x_2+x_3 \leq 3n-3 \end{matrix} \right \}=\widehat{\Pi}(n,n,n-3).$$
Thus, $\cP_2$ is obtained from $\cP_1$ by removing a half-open simplex from the $(n,n,n)$ corner of the $n$-dilated cube, namely $$\textrm{ConvexHull}
\begin{bmatrix}n-3&n&n&n\\n&n-3&n&n\\n&n&n-3&n\end{bmatrix}$$ 
minus one facet, where in this proof and the proof of Theorem~\ref{thm:p4n}, the convex hull of a matrix
 denotes the convex hull of the set of vectors formed by the columns of the matrix.
By removing the half-open simplex, we adjust the Ehrhart polynomial by $-\binom{3t+2}{3}$.

\noindent\textbf{(3).} We now add the three remaining inequalities,
and consider
$$\cP_3=\cP(3,n)=\left \{ \mathbf{x} \in \mathbb{R}^3 \,\middle |\, \begin{matrix}   0 \leq x_i \leq n \text{ for all } 1 \leq i \leq 3,
\\x_1+x_2+x_3 \leq 3n-3,\\
x_1+x_2\leq2n-1,\, x_1+x_3\leq2n-1,\, x_2+x_3\leq2n-1\end{matrix} \right \}.$$
Thus, $\cP_3$ is obtained from $\cP_2$ by removing the three half-open polytopes $\widetilde{R}_{ij}=\cP_2\cap\{\mathbf{x}\in\mathbb{R}^m\mid x_i+x_j>2n-1\}$,
for distinct $i,j$ in $[3]$. These half-open polytopes are pairwise disjoint since, 
for distinct $i,j,k$ in $[3]$, any $\mathbf{x}\in\widetilde{\cR}_{ij}\cap\widetilde{\cR}_{jk}$ 
would need to satisfy $4n-2=(2n-1)+(2n-1)<(x_i+x_j)+(x_j+x_k)=(x_i+x_j+x_k)+x_j
=(x_1+x_2+x_3)+x_j\leq(3n-3)+n=4n-3$, which gives the contradiction $4n-2<4n-3$.  It can be seen that each $\widetilde{R}_{ij}$ is congruent
to $\widetilde{R}_{12}$, which is almost a triangular prism.
More precisely, $\widetilde{R}_{12}$ is
\begin{equation}\label{eq:matrix_ad_hoc}
\textrm{ConvexHull}\begin{bmatrix}
n&n&n-1&n&n-1&n\\
n&n-1&n&n-1&n&n\\
0&0&0&n-2&n-2&n-3
\end{bmatrix},
\end{equation}
minus the rectangle on the plane $\{\mathbf{x} \in \mathbb{R}^3\mid x_1+x_2=2n-1\}$.
To compute its Ehrhart polynomial, we start by considering $\binom{t+2}{2}((n-2)t+1)$, the Ehrhart polynomial of the full prism.
Then we correct by $-\binom{t+2}{3}$ since we have to subtract a half-open simplex arising from
\[\textrm{ConvexHull}\begin{bmatrix}
n&n&n-1&n\\
n&n-1&n&n\\
n-2&n-2&n-2&n-3
\end{bmatrix}.\]
Finally, we subtract the Ehrhart polynomial of the rectangle, which
is $((n-2)t+1)(t+1)$.
 
All in all, the Ehrhart polynomial of $\cP(3,n)$ is
\[(nt+1)^3-\binom{3t+2}{3}-3\left[\binom{t+2}{2}((n-2)t+1)-\binom{t+2}{3}-((n-2)t+1)(t+1)\right],\]
which simplifies to the polynomial in~\eqref{eq:ehrp3n}.
\end{proof}

\begin{figure}[ht]
\centering
\includegraphics[height=1.8in]{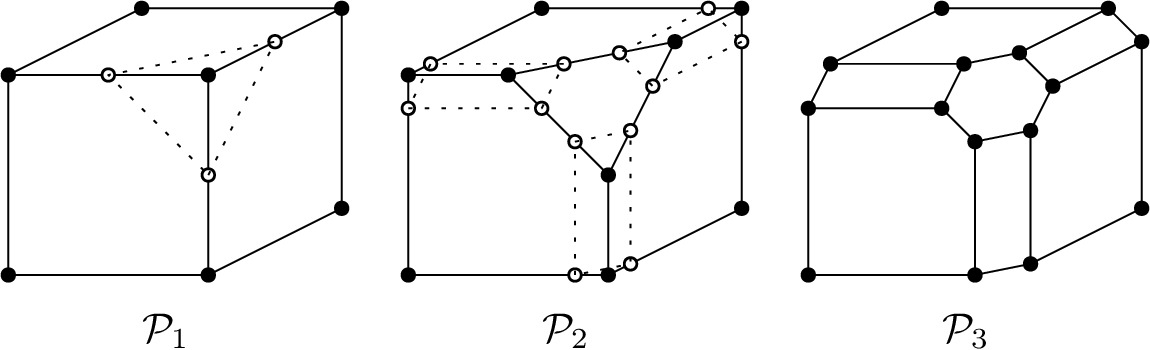}
\caption{A step-by-step illustration of the proof of Theorem \ref{thm:p3n}.}
\label{fig:thm:p3n}
\end{figure}

Note that by taking $3!$ times the coefficient of $t^3$ in Theorem~\ref{thm:p3n}, we recover the formula for $v(m,3)$ in Example~\ref{ex:formulas}, but now with a more geometric proof.

We can push the sculpting strategy one dimension higher to compute the Ehrhart polynomial of $\cP(4,n)$, though the calculations become considerably more tedious.

\begin{theorem}\label{thm:p4n}
For any $n\geq3$, the Ehrhart polynomial of $\cP(4,n)$ is
\begin{multline}
\label{eq:ehrp4n}
\ehr(\cP(4,n),t)=\left(n^4-3n^2-4n-9/4\right)t^4+\left(4n^3-3n^2-6n-5/2\right)t^3\\
+\left(6n^2-6n-9/4\right)t^2+\left(4n-3\right)t+1.
\end{multline}
\end{theorem}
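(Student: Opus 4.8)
The plan is to establish \eqref{eq:ehrp4n} first for all $n\ge6$ by the sculpting strategy, and then to deduce it for $n\ge3$. Recall from Corollary~\ref{cor-anti-block} that $\cP(4,n)=\widehat{\Pi}(n,n-1,n-2,n-3)$ for $n\ge3$, so that by \eqref{facetdesc} its irredundant facet inequalities (for $n\ge4$) are $0\le x_i\le n$, $x_i+x_j\le2n-1$, $x_i+x_j+x_k\le3n-3$ and $x_1+x_2+x_3+x_4\le4n-6$. Assuming $n\ge6$, I would sculpt $\cP(4,n)$ from the $n$-dilated cube in four stages, arranging matters so that each intermediate polytope has the form $\widehat{\Pi}(\mathbf{z})$, and hence has vertices and edges described by Proposition~\ref{prop-vertices-edges}. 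In Stage~1 we take $\cP_1=[0,n]^4=\widehat{\Pi}(n,n,n,n)$, with $\ehr(\cP_1,t)=(nt+1)^4$. In Stage~2 we add $x_1+x_2+x_3+x_4\le4n-6$ to get $\cP_2=\widehat{\Pi}(n,n,n,n-6)$; the substitution $y_i=n-x_i$ identifies the closure of the removed piece with $6\Delta_4$ (here $n\ge6$ is exactly what makes the constraints $y_i\le n$ redundant), so by Example~\ref{ex:half-open} and $\ehr(k\cP,t)=\ehr(\cP,kt)$ the removed half-open simplex contributes $\binom{6t+3}{4}$, giving $\ehr(\cP_2,t)=(nt+1)^4-\binom{6t+3}{4}$.

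In Stage~3 we add the four triple inequalities $x_i+x_j+x_k\le3n-3$ to obtain $\cP_3=\widehat{\Pi}(n,n,n-3,n-3)$. For the triple $\{1,2,3\}$, putting $y_i=n-x_i$ for $i\in\{1,2,3\}$ and keeping $x_4$ identifies the closure of the removed piece with the lattice polytope
\[
\cQ=\bigl\{(y_1,y_2,y_3,x_4)\in\RR^4\bigm|y_i\ge0,\ x_4\ge0,\ y_1+y_2+y_3\le3,\ x_4\le n-6+y_1+y_2+y_3\bigr\},
\]
and the removed facet with $\cF\cong\{y\in\RR^3\mid y_i\ge0,\ y_1+y_2+y_3=3\}\times[0,n-3]$, so $\ehr(\cF,t)=\binom{3t+2}{2}\bigl((n-3)t+1\bigr)$. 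A short argument shows the four open pieces are pairwise disjoint: the closures of the removed pieces for two distinct triples meet only in the single point with two coordinates equal to $n$ (in the two shared positions) and two equal to $n-3$, and that point lies on both removed facets. Hence $\ehr(\cP_3,t)=\ehr(\cP_2,t)-4\bigl(\ehr(\cQ,t)-\ehr(\cF,t)\bigr)$. To compute $\ehr(\cQ,t)$ I would slice $\cQ$ at $x_4=n-6$: the lower part is $3\Delta_3\times[0,n-6]$, the upper part is $\{(y,w)\mid y\in3\Delta_3,\ 0\le w\le y_1+y_2+y_3\}$, and \eqref{eq:ie-general} (gluing along a copy of $3\Delta_3$) combines them, the only non-binomial ingredient being $\sum_{y\in3t\Delta_3\cap\bbZ^3}(y_1+y_2+y_3)=9t\binom{3t+3}{3}-9\binom{3t+3}{4}$.

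In Stage~4 we add the six pair inequalities $x_i+x_j\le2n-1$ to obtain $\cP_4=\cP(4,n)=\widehat{\Pi}(n,n-1,n-2,n-3)$. For the pair $\{i,j\}$ the removed piece lies, after setting $y_i=n-x_i$ and $y_j=n-x_j$, inside the thin wedge $y_i,y_j\ge0$, $y_i+y_j\le1$; using Lemma~\ref{lem:strip} together with Proposition~\ref{prop-vertices-edges} I would write down the vertices and the Ehrhart polynomial of the closure of each such piece, and then apply inclusion--exclusion \eqref{eq:ie-general} over the six pieces. Their intersections fall into a few combinatorial types — two pairs sharing one index, and two complementary pairs, with readily described higher-order overlaps — and each intersection polytope is again easily handled. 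Collecting $\ehr(\cP_4,t)=\ehr(\cP_3,t)$ minus the inclusion--exclusion sum, and simplifying the resulting polynomial identity, yields \eqref{eq:ehrp4n} for $n\ge6$. Finally, since $\ehr(\cP(4,n),t)$ is a polynomial in $n$ for $n\ge3$ (a fact which will follow from the generalized-permutohedron formula~\eqref{eq-Ehr-drac-seq} of Section~\ref{ssec-gpformulae}), an identity valid for all $n\ge6$ extends to all $n\ge3$; alternatively one verifies \eqref{eq:ehrp4n} directly for $\cP(4,3)$, $\cP(4,4)$ and $\cP(4,5)$, e.g.\ by computer.

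The main obstacle is Stage~4. Because the six pieces removed in passing from $\cP_3$ to $\cP(4,n)$ genuinely overlap, one must identify all the intersection polytopes and compute their Ehrhart polynomials, and it is this inclusion--exclusion bookkeeping — rather than any single new idea — that makes the computation ``considerably more tedious'' than the $m=3$ case of Theorem~\ref{thm:p3n}. Stage~3 is the second delicate point, since the removed piece $\cQ$ is neither a simplex nor a product, so its Ehrhart polynomial must be obtained by the secondary decomposition indicated above.
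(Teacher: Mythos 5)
Your sculpting scheme is essentially the paper's: the same four stages through the intermediate polytopes $\widehat{\Pi}(n,n,n,n-6)$ and $\widehat{\Pi}(n,n,n-3,n-3)$, the same separate treatment of $n=3,4,5$, and your Stage~3 computation is correct --- slicing $\cQ$ at $x_4=n-6$ gives $\binom{3t+3}{3}\bigl((n-6)t+1\bigr)+3\binom{3t+3}{4}$, which (via $4\binom{3t+3}{4}=3t\binom{3t+3}{3}$) agrees with the paper's prism-minus-half-open-simplex value $\binom{3t+3}{3}\bigl((n-3)t+1\bigr)-\binom{3t+3}{4}$. But your Stage~4 misidentifies the difficulty and omits the one computation that actually carries the weight. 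The six half-open pieces do \emph{not} ``genuinely overlap'': if $x_1+x_2>2n-1$ and $x_1+x_3>2n-1$ then $x_1+x_2+x_3>4n-2-x_1\ge 3n-2$, contradicting the inequality $x_1+x_2+x_3\le 3n-3$ already imposed in $\cP_3$; and if $x_1+x_2>2n-1$ and $x_3+x_4>2n-1$ then $\sum_i x_i>4n-2$, contradicting $\sum_i x_i\le 4n-6$. So no inclusion--exclusion is needed (the closures meet only in lower-dimensional sets inside the retained facets), and one simply subtracts six times $\ehr(\cQ_3,t)-\ehr(\cF,t)$ for a single piece; building an inclusion--exclusion over phantom ``higher-order overlaps'' risks introducing spurious correction terms.

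The genuine gap is the Ehrhart polynomial of that single Stage-4 piece, which you dispose of with ``write down the vertices and the Ehrhart polynomial of the closure.'' Listing the vertices via Lemma~\ref{lem:strip} and Proposition~\ref{prop-vertices-edges} is routine, but the resulting closure is a $4$-polytope with $14$ vertices that is only \emph{almost} the product of a triangle with a pentagon (over the vertex $(n,n)$ of the triangle the pentagonal cross-section degenerates to a square), so its Ehrhart polynomial is not obtainable from simplices, products, or lattice pyramids without a further explicit subdivision; this is precisely the content of the paper's Lemma~\ref{lem:aux3} and its three-part decomposition. Until you supply that decomposition (or an equivalent computation of this piece), the proof is incomplete. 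Your two routes for descending from $n\ge 6$ to $n\ge 3$ --- polynomiality in $n$ from~\eqref{eq-Ehr-drac-seq}, or direct verification of the three small cases --- are both sound; the paper uses the latter.
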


\begin{proof}
The Ehrhart polynomials of $\cP(4,3)$, $\cP(4,4)$ and $\cP(4,5)$ can be computed individually, for example using SageMath, and are found to match~\eqref{eq:ehrp4n}.

We now compute the Ehrhart polynomial of $\cP(4,n)$ with $n\ge6$ in four steps.  
(Note that, for $n\le5$, the equation $\cP_2=\widehat{\Pi}(n,n,n,n-6)$ in step~(2), and certain subsequent statements, would no longer hold.)

\noindent\textbf{(1).} We begin with an $n$-dilated unit $4$-cube,
$$\cP_1=\left \{ \mathbf{x} \in \mathbb{R}^4 \mid  0 \leq x_i\leq n \text{ for all } 1 \leq i \leq 4\right \},$$ 
which has Ehrhart polynomial $(nt+1)^4$.

\noindent\textbf{(2).} Next, we consider 
$$\cP_2=\left \{ \mathbf{x} \in \mathbb{R}^4 \,\middle |\, \begin{array}{c}
0 \leq x_i\leq n \text{ for all } 1 \leq i\leq 4,  \\
x_1+x_2+x_3+x_4\leq 4n-6
\end{array} \right \}=\widehat{\Pi}(n,n,n,n-6).$$ 
We have removed from $\cP_1$ everything with $x_1+x_2+x_3+x_4>4n-6$. By Lemma~\ref{lem:strip}, the vertices of this removed piece can be obtained as follows. The only vertex on the forbidden side 
(i.e., with $x_1+x_2+x_3+x_4>4n-6$) is $(n,n,n,n)$, and we cut along its four incoming edges
(from $(0,n,n,n)$ and its permutations), giving four further vertices, consisting of $(n-6,n,n,n)$ and its permutations.
Since we have removed a half-open simplex, we correct by $-\binom{6t+3}{4}$.

\noindent\textbf{(3).} We now consider
$$\cP_3=\left \{ \mathbf{x} \in \mathbb{R}^4 \,\middle |\, \begin{array}{c}
0\leq x_i\leq n \text{ for all } 1 \leq i\leq 4,  \\
x_1+x_2+x_3+x_4\leq 4n-6,\\
x_1+x_2+x_3\phantom{+x_4}\leq 3n-3,\\
x_1+x_2\phantom{+x_3}+x_4\leq 3n-3,\\
x_1\phantom{+x_2}+x_3+x_4\leq 3n-3,\\
\phantom{x_1+}x_2+x_3+x_4\leq 3n-3\end{array} \right \}=\widehat{\Pi}(n,n,n-3,n-3).$$ 
Thus, $\cP_3$ is obtained from $\cP_2$ by removing the four half-open polytopes
$\widetilde{R}_{ijk}=\cP_2\cap\{\mathbf{x}\in\mathbb{R}^m\mid x_i+x_j+x_k>3n-3\}$,
for distinct $i,j,k$ in $[4]$. These half-open polytopes are pairwise disjoint since, 
for distinct $i,j,k,l$ in $[4]$, any $\mathbf{x}\in\widetilde{\cR}_{ijk}\cap\widetilde{\cR}_{jkl}$ 
would need to satisfy $6n-6=(3n-3)+(3n-3)<(x_i+x_j+x_k)+(x_j+x_k+x_l)=(x_i+x_j+x_k+x_l)+x_j+x_k
=(x_1+x_2+x_3+x_4)+x_j+x_k\leq(4n-6)+n+n=6n-6$, which gives the contradiction $6n-6<6n-6$.
Note that each $\widetilde{\cR}_{ijk}$ is congruent to $\widetilde{\cR}_{123}=\widehat{\Pi}(n,n,n,n-6)\cap\{\mathbf{x}\in\mathbb{R}^4\mid x_1+x_2+x_3>3n-3\}$. 
By Lemma~\ref{lem:strip} (and Proposition~\ref{prop-vertices-edges} for
$\widehat{\Pi}(n,n,n,n-6)$), the vertices of the closure $\cR_{123}$ of $\widetilde{\cR}_{123}$ can be obtained as follows. The only vertices on the forbidden side are $(n,n,n,n-6)$ and $(n,n,n,0)$, and the
vertices obtained by cutting along edges are $(n,n,n-3,n-3)$, $(n,n-3,n,n-3)$ and $(n-3,n,n,n-3)$ (which arise from edges incident to $(n,n,n,n-6)$), and $(n,n,n-3,0)$, $(n,n-3,n,0)$ and $(n-3,n,n,0)$ (which arise from edges incident to $(n,n,n,0)$).
This gives almost a prism over a simplex.
Indeed, by adding the point $(n,n,n,n-3)$ we get the prism $$\textrm{ConvexHull}
\left[\begin{array}{cccc}
n&n-3&n&n\\
n&n&n-3&n\\
n&n&n&n-3
\end{array}\right]\times [0,n-3],$$
which has Ehrhart polynomial $\binom{3t+3}{3}((n-3)t+1)$. We need to replace a half-open simplex with Ehrhart polynomial $\binom{3t+3}{4}$,
and we also need to replace 
$$\textrm{ConvexHull}
\left[\begin{array}{ccc}
n-3&n&n\\
n&n-3&n\\
n&n&n-3\end{array}\right]\times [0,n-3],$$
which has Ehrhart polynomial $\binom{3t+2}{2}((n-3)t+1)$.

In total, this step contributes $4\left[-\binom{3t+3}{3}((n-3)t+1)+\binom{3t+3}{4}+\binom{3t+2}{2}((n-3)t+1)\right]$ to the count.

\noindent\textbf{(4).} Finally, we remove six pieces, each congruent to 
$\widehat{\Pi}(n,n,n-3,n-3)\cap\{\mathbf{x}\in\mathbb{R}^4\mid x_1+x_2>2n-1 \}$,
and obtain $\cP(4,n)$.  As in the previous step, it can be checked that the pieces are piecewise disjoint.
By Lemma \ref{lem:strip} (and Proposition~\ref{prop-vertices-edges} for
$\widehat{\Pi}(n,n,n-3,n-3)$), the vertices of the closure of the removed piece 
with $x_1+x_2>2n-1$ are as follows:
\begin{enumerate}
\item The vertices on the forbidden side, which are $(n,n,n-3,n-3)$, $(n,n,n-3,0)$, $(n,n,0,n-3)$ and $(n,n,0,0)$.
\item The vertices obtained by cutting along edges, which are the columns of
\[\left[\begin{array}{cccccccccccccc}
n&n&n&n&n&n-1&n-1&n-1&n-1&n-1\\
n-1&n-1&n-1&n-1&n-1&n&n&n&n&n\\
n-2&n-3&n-2&0&0&n-2&n-3&n-2&0&0\\
n-3&n-2&0&n-2&0&n-3&n-2&0&n-2&0
\end{array}\right],\]
where columns~$1$,~$2$,~$6$ and~$7$ arise from edges incident to $(n,n,n-3,n-3)$,
columns~$3$ and~$8$ arise from edges incident to $(n,n,n-3,0)$,
columns~$4$ and~$9$ arise from edges incident to $(n,n,0,n-3)$,
and columns~$5$ and~$10$ arise from edges incident to $(n,n,0,0)$.
\end{enumerate}
This is shown in Lemma \ref{lem:aux3} to give a contribution of 
$$6\left(-\frac{n^2t^4}{2} - \frac{n^2t^3}{2} + \frac{7nt^4}{3} + 2nt^3 - \frac{nt^2}{3} - \frac{21t^4}{8} - \frac{23t^3}{12} + \frac{5t^2}{8} - \frac{t}{12}\right).$$

Adding the contributions of the four steps above, and simplifying, then gives
the desired expression in~\eqref{eq:ehrp4n}.
\end{proof}
Note that by taking $4!$ times the coefficient of $t^4$ in Theorem~\ref{thm:p4n}, we recover the formula for $v(m,4)$ in Example~\ref{ex:formulas}, again with a more geometric proof.

\subsection{Generalized permutohedra results}\label{ssec-gpformulae}
We now use the generalized permutohedron point of view developed in Section~\ref{sec:gp} to obtain certain results regarding the Ehrhart polynomial of~$\cP(m,n)$ with $n\ge m-1$.

By applying~\cite[Theorem~11.3]{postnikov} to~\eqref{eq-Minkowski2} (which involves 
regarding $t\,\widetilde{\cP}(m,n)$ as the so-called trimmed version of 
$\simp_{m+1}+t\,\widetilde{\cP}(m,n)$, 
where $\widetilde{\cP}(m,n)$ is defined in~\eqref{liftedP} and $\simp_{m+1}$ is the standard simplex in $\RR^{m+1}$), it follows, after some simplification, that the Ehrhart polynomial of~$\widetilde{\cP}(m,n)$ and~$\cP(m,n)$ with $n\geq m-1$ is given by
\begin{equation}\label{eq-Ehr-drac-seq}
\ehr(\cP(m,n),t)=\sum_{(a_1,\ldots,a_{m(m+1)/2})}\:\prod_{i=1}^m\binom{(n-m+1)t+a_i-1}{a_i}
\:\prod_{i=m+1}^{m(m+1)/2}\binom{t+a_i-1}{a_i},\end{equation}
where the sum is over all draconian sequences for this case, with these defined as follows.
Let $I_1,\ldots,I_{m(m+1)/2}$ be the same as for the draconian sequences in~\eqref{eq-vol-drac-seq}.
Then the draconian sequences in~\eqref{eq-Ehr-drac-seq} are those sequences $(a_1,\ldots,a_{m(m+1)/2})$ of nonnegative integers such that $\sum_{k\in S}a_k\le|\cup_{k\in S}I_k|$, for all $\varnothing\subsetneq S\subseteq[m(m+1)/2]$.  (Hence, the definition of the draconian sequences in~\eqref{eq-Ehr-drac-seq} 
can be obtained from the definition of the draconian sequences in~\eqref{eq-vol-drac-seq} by simply relaxing the condition $\sum_{k=1}^{m(m+1)/2}a_k=m$ to $\sum_{k=1}^{m(m+1)/2}a_k\le m$.)  Note that taking $S$ to be a singleton
in the condition $\sum_{k\in S}a_k\le|\cup_{k\in S}I_k|$ gives 
$a_i\le 1$ for $i=1,\ldots,m$, and $a_i\le2$ for $i=m+1,\ldots,m(m+1)/2$,
as  also occurs in~\eqref{eq-vol-drac-seq}.

It follows from~\eqref{eq-Ehr-drac-seq} that, for any fixed $m$ and $t$,
$\ehr(\cP(m,n),t)$ with $n\ge m-1$ is given by a polynomial in $(n-m+1)t$, 
that all the coefficients of this polynomial are positive integers if~$t$ is a positive integer,
and that this polynomial has degree~$m$ (since $m$ $1$'s followed by $m(m-1)/2$ $0$'s is again a draconian sequence).
It can also be seen that, for any fixed~$m$ and~$n$ with $n\ge m-1$,
$\ehr(\cP(m,n),t)$  is given by a polynomial in~$t$ of degree~$m$,
where this also follows from general Ehrhart theory,
and that all the coefficients of this polynomial are positive, where this also follows from the general property
that the Ehrhart polynomial of any lattice type-$\cY$ generalized permutohedron has positive coefficients (see, for example,~\cite[Corollary 3.1.5]{liu}).

\begin{example}
For $m=2$, the draconian sequences in~\eqref{eq-Ehr-drac-seq} are
$(0,0,2)$, $(0,1,1)$, $(1,0,1)$, $(1,1,0)$, $(0,0,1)$, $(0,1,0)$, $(1,0,0)$ and $(0,0,0)$ (where the first four of these are the draconian sequences of Example~\ref{ex-vol-drac}). This gives 
\begin{equation*}\textstyle\ehr(\cP(2,n),t)=\binom{t+1}{2}+\binom{(n-1)t}{1}\binom{t}{1}+
\binom{(n-1)t}{1}\binom{t}{1}+\binom{(n-1)t}{1}^2+\binom{t}{1}+\binom{(n-1)t}{1}+\binom{(n-1)t}{1}+1,
\end{equation*}
which matches~\eqref{eq:ehrp2n}.  
\end{example}

The draconian sequences in~\eqref{eq-Ehr-drac-seq}
for $m=3$ and $m=4$ can also easily be obtained using a computer (there being 
51 and 455 sequences, respectively), and these can then be used to give alternative
proofs of Theorems~\ref{thm:p3n} and~\ref{thm:p4n}.

Note that for $m-1>n\ge2$, $\widetilde{\cP}(m,n)$ does not appear to be a type-$\cY$ generalized permutohedron, so it seems that there are no 
similar shortcuts to the results in Section~\ref{sec:volppn} for the normalized volume and Ehrhart polynomial of~$\cP(m,n)$ with $m-1>n\ge2$.

\begin{remark}\label{rem-drac-parking}
For $n=m-1$, the summand in~\eqref{eq-Ehr-drac-seq} is zero unless the draconian sequence $(a_1,\ldots,a_{m(m+1)/2})$ has $a_1=\dots=a_m=0$.  Hence, in this case,~\eqref{eq-Ehr-drac-seq} simplifies analogously to the simplification of~\eqref{eq-vol-drac-seq} to~\eqref{eq-vol-drac-seq3}.  Specifically, we obtain
\begin{equation}\label{eq-rem-drac-parking}
\ehr(\cP(m,m-1),t)=\sum_{(b_{12},b_{13}\ldots,b_{m-1,m})}\,\prod_{1\le i<j\le m}\!\binom{t+b_{ij}-1}{b_{ij}},\end{equation}
where the sum is over all
sequences $(b_{ij})_{1\le i<j\le m}$ of nonnegative integers such that $\sum_{(i,j)\in S}b_{ij}\le|\cup_{(i,j)\in S}\{i,j\}|$, for all $\varnothing\subsetneq S\subseteq\{(i,j)\mid 1\le i< j\le m\}$.
It follows that $\ehr(\cP(m,m-1),1)$, i.e., the number of integer points of $\cP(m,m-1)$,
is simply the number of such sequences $(b_{ij})_{1\le i<j\le m}$.
Note that an expression for $\ehr(\cP(m,m-1),1)$, as a sum over certain sequences of subsets of~$[m]$, is obtained in~\cite[Theorem~5.1]{AW} (in the context of the parking function polytope~$P_m$ of Remark~\ref{rem-parkingfunctionpolytope}), and that this expression can
be related to the number of sequences~$(b_{ij})_{1\le i<j\le m}$.  
Note also that, by identifying $\cP(m,m-1)$ with the win vector polytope of the complete graph~$K_m$ (see Remark~\ref{rem-winpolytope}) and using~\cite[Theorem~3.10]{bartels1997polytope}, it follows
that the set of integer points of~$\cP(m,m-1)$ is the set of win vectors of all partial orientations
of~$K_m$.  Finally, note that~\eqref{eq-rem-drac-parking} provides an answer to Question~(b) in~\cite[Section~6]{AW}, and to certain other questions which will be discussed in Remark~\ref{rem-conj-Ehr-expl}.
\end{remark}

\subsection{Further directions}\label{sec-further}
We end with a conjecture which provides a completely explicit formula for $\ehr(\cP(m,n),t)$ with $n\ge m-1$.
As in Section~\ref{sec:closedformv(m,n)}, $[z^i]f(z)$ denotes the coefficient of~$z^i$ in the expansion of 
a power series $f(z)$, and the double factorial is $(2i-3)!!=-\prod_{j=1}^i(2j-3)$, for any nonnegative integer $i$.

\begin{conjecture}\label{conj-Ehr-expl}
We conjecture that, for any~$m$ and~$n$ with $n\ge m-1$, the Ehrhart polynomial of~$\cP(m,n)$ is given by
\begin{multline}\label{eq-Ehr-expl1}
\ehr(\cP(m,n),t)\\
=\frac{1}{2^m}\sum_{i=0}^{\lfloor m/2\rfloor}\sum_{j=2i}^m(-1)^{i+1}\,\binom{m}{m-j,\,j-2i,\,i,\,i}\,i!\:(2j-4i-3)!!\:t^{j-i}\,(2nt+t+2)^{m-j}\end{multline}
and
\begin{equation}\label{eq-Ehr-expl2}
\ehr(\cP(m,n),t)=m!\,[z^m]\,\sqrt{1-tz}\,e^{(nt+t/2+1)z-tz^2/4}.\end{equation}
\end{conjecture}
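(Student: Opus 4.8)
The plan is to prove Conjecture~\ref{conj-Ehr-expl} by the strategy that produced Theorem~\ref{thm:closedformulav(m,n)} from the volume recursion of Theorem~\ref{thm:recursion for volume of P(m,n)}, lifted from volumes to Ehrhart polynomials: first establish a recursion in~$m$ for $E(m,n,t):=\ehr(\cP(m,n),t)$, and then solve it by generating functions. The equivalence of the two claimed formulas~\eqref{eq-Ehr-expl1} and~\eqref{eq-Ehr-expl2} is routine and would be disposed of first: expanding $\sqrt{1-tz}$, $e^{(nt+t/2)z}$, $e^{z}$ and $e^{-tz^{2}/4}$ as power series, merging $e^{(nt+t/2)z}e^{z}=e^{(nt+t/2+1)z}$, extracting $[z^{m}]$ from~\eqref{eq-Ehr-expl2}, and substituting the value of $\binom{1/2}{a}$ in terms of $(2a-3)!!$, one matches~\eqref{eq-Ehr-expl2} with~\eqref{eq-Ehr-expl1} term by term under $i=d$ and $j=2d+a$, where $d$ is the summation index coming from $e^{-tz^{2}/4}$ and $a$ is the one coming from $\sqrt{1-tz}$. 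Hence it suffices to prove~\eqref{eq-Ehr-expl2}.

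The heart of the argument is the claim that, for $n\ge m-1$,
\begin{multline*}
E(m,n,t)=\bigl((m+n-1)t+1\bigr)\,E(m-1,n,t)-(m-1)\Bigl(\bigl(n+\tfrac12\bigr)t^{2}+\tfrac32 t\Bigr)E(m-2,n,t)\\
{}+\tfrac{(m-1)(m-2)}{2}\,t^{2}\,E(m-3,n,t),
\end{multline*}
with $E(0,n,t)=1$, the coefficients of $E(-1,n,t)$ and $E(-2,n,t)$ being zero for $m\le 2$. This is a $t$-deformation of the volume recursion~\eqref{eq-vol-rec}, whose leading-order-in-$t$ part it recovers. Granting this recursion, the conjecture follows formally, exactly as Theorem~\ref{thm:closedformulav(m,n)} follows from~\eqref{eq-vol-rec} in Remark~\ref{rem-vol-rec}: the function $g(z)=\sqrt{1-tz}\,e^{(nt+t/2+1)z-tz^{2}/4}$ satisfies $g(0)=1$ and the first-order linear differential equation $(1-tz)\,g'(z)=\bigl(nt+1-\bigl((n+\tfrac12)t^{2}+\tfrac32 t\bigr)z+\tfrac12 t^{2}z^{2}\bigr)g(z)$, and extracting the coefficient of $z^{m-1}$ from this equation shows that the polynomials $m!\,[z^{m}]g(z)$ obey precisely the displayed recursion with the same initial data; hence $E(m,n,t)=m!\,[z^{m}]g(z)$ by induction on~$m$. (If one instead establishes an Ehrhart analogue of the convolution recursion~\eqref{eq:recursion1}, with spanning-tree weights $k^{k-1}$, then the tree function $T(z)$ of~\eqref{Tdef}--\eqref{Tprop2} reappears as in the proof of Theorem~\ref{thm:closedformulav(m,n)}, the transfer~\eqref{Tprop2} being applied after the substitution $z\mapsto tz$.)

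To establish the recursion I would adapt the pyramidal subdivision of Theorem~\ref{thm:recursion for volume of P(m,n)} to Ehrhart polynomials. Since $\cP(m,n)$ is a simple polytope (Proposition~\ref{p-simple}) with the origin as a vertex, a generic shelling expresses it as a disjoint union of half-open pyramids $\Pyr(F_{S},\mathbf{0})$ over the facets $F_{S}$ not containing the origin, with $S$ ranging over the nonempty subsets of $[m]$, and by the analysis in the proof of Theorem~\ref{thm:recursion for volume of P(m,n)} each $F_{S}$ is congruent to $\Pi(1,\dots,|S|)\times\cP(m-|S|,n-|S|)$. One then sums the Ehrhart polynomials of the half-open pieces, using $\ehr(\Pi(1,\dots,|S|),t)$, $\ehr(\cP(m-|S|,n-|S|),t)$ and the product rule for Ehrhart polynomials, and reindexes. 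Alternatives are to run the half-open sculpting of Section~\ref{sec:Ehr4} for general~$m$ (cutting $[0,n]^{m}$ by $\sum_{i\in S}x_{i}\le |S|n-\binom{|S|}{2}$ for $|S|=m,m-1,\dots,2$ in turn and bookkeeping with Lemma~\ref{lem:strip}), or to prove the recursion directly from the exact draconian-sequence formula~\eqref{eq-Ehr-drac-seq} by splitting on the singleton $\{m\}$ and the pairs $\{i,m\}$.

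The main obstacle is precisely this recursion. For volumes the pyramidal subdivision is painless because volume is additive over a subdivision and a pyramid has volume (base volume)$\cdot$(height)$/m$; neither holds for Ehrhart polynomials. The Ehrhart polynomial of the half-open pyramid $\Pyr(F_{S},\mathbf{0})$ is not determined by $\ehr(F_{S},t)$ and a numerical height: the integer points of its $t$-dilate organize by the level $c=\sum_{i\in S}x_{i}\in\{0,1,\dots,t\bigl(|S|n-\binom{|S|}{2}\bigr)\}$, whose slice is the rational dilate $\tfrac{c}{\,|S|n-\binom{|S|}{2}\,}\,F_{S}$, so one must control the Ehrhart \emph{quasi}-polynomial of $F_{S}\cong\Pi(1,\dots,|S|)\times\cP(m-|S|,n-|S|)$ summed over these levels, together with the lattice structure of the cone over $F_{S}$. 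Showing that these contributions collapse to a polynomial in~$t$ expressible through $\ehr(\Pi(1,\dots,|S|),t)$ and $\ehr(\cP(m-|S|,n-|S|),t)$ with exactly the coefficients in the displayed recursion — whether by a half-open-decomposition and Ehrhart-reciprocity argument, or by a direct computation with the draconian sequences of~\eqref{eq-Ehr-drac-seq} — is where the real work lies, and where the proof ceases to be routine.
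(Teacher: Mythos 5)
The statement you are addressing is a \emph{conjecture} in the paper: the authors offer no proof, only consistency checks (the leading coefficient reproduces Theorem~\ref{thm:closedformulav(m,n)}, and the formula matches the computed Ehrhart polynomials for $m\le4$). Your proposal likewise does not prove it, and you say as much. The parts you do carry out are sound but coincide with what the paper already records: the equivalence of \eqref{eq-Ehr-expl1} and \eqref{eq-Ehr-expl2} is the routine binomial/double-factorial matching the authors allude to, and your observation that the conjecture would follow from the three-term recursion is exactly the content of the remark following the conjecture, where \eqref{eq-Ehr-rec} is derived from the differential equation satisfied by $g(z)=\sqrt{1-tz}\,e^{(nt+t/2+1)z-tz^2/4}$. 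So the entire burden rests on establishing that recursion, and this is precisely the step you leave open.

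The gap is not merely an omitted computation; the route you propose has a structural obstruction that you yourself identify correctly. Lemma~\ref{lem:coning} subdivides $\cP(m,n)$ into pyramids $\Pyr(F_S,\mathbf{0})$, and this is enough for volumes because volume is additive and $\vol(\Pyr(\cB,\vv))=\vol(\cB)\cdot D/m$. Neither fact has an Ehrhart analogue here: the interior walls of the subdivision carry lattice points that must not be double-counted (a half-open decomposition is needed, and the pieces are not translates of one another), and, more seriously, the facet $F_S$ lies on the hyperplane $\sum_{i\in S}x_i=|S|n-\binom{|S|}{2}$, which is at lattice height greater than~$1$ from the apex, so the level sets of the cone are \emph{rational} dilates of $F_S$ and their lattice-point counts are governed by an Ehrhart quasi-polynomial evaluated at non-integer arguments. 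There is no general principle by which these collapse to a polynomial expressible through $\ehr(\Pi(1,\ldots,|S|),t)$ and $\ehr(\cP(m-|S|,n-|S|),t)$ with the specific coefficients of your displayed recursion, and you give no argument that they do. The alternative routes you mention (running the sculpting of Section~\ref{sec:Ehr4} for general $m$, or manipulating the draconian-sequence formula \eqref{eq-Ehr-drac-seq}) are plausible research directions, but they are not executed. As it stands, your writeup is a reasonable plan of attack that reduces the conjecture to an unproven recursion, and the conjecture remains open.
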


We note that a conjectural expression for the form of
$\ehr(\cP(m,n),t)$ with $n\ge m-1$ appeared as Conjecture~6.3 in the first arXiv version of this paper.

It can be shown straightforwardly that the right hand sides of~\eqref{eq-Ehr-expl1} and~\eqref{eq-Ehr-expl2} are equal.
Indeed, by considering natural generalizations of~\eqref{eq:vmncoeff}, we initially conjectured~\eqref{eq-Ehr-expl2} and then evaluated this explicitly to obtain~\eqref{eq-Ehr-expl1}.

Conjecture~\ref{conj-Ehr-expl} can be seen to generalize Theorem~\ref{thm:closedformulav(m,n)}, as follows. Since
$v(m,n)/m!$ is the leading coefficient of $\ehr(\cP(m,n),t)$, as a polynomial in~$t$, and since the degree of this polynomial is~$m$, we have $v(m,n)/m!=\bigl(t^m\,\ehr(\cP(m,n),1/t)\bigr)\big|_{t\to0}$.
Defining $g(z,t)=\sqrt{1-tz}\,e^{(nt+t/2+1)z-tz^2/4}$, and assuming
that~\eqref{eq-Ehr-expl2} holds, we then have, for $n\ge m-1$,
\begin{align*}
v(m,n)/(m!)^2&=\bigl(t^m\,[z^m]\,g(z,1/t)\bigr)\big|_{t\to0}=[z^m]\,g(tz,1/t)\big|_{t\to0}=[z^m]\,\sqrt{1-z}\,e^{(n+1/2+t)z-tz^2/4}\big|_{t\to0}\\[1.7mm]
&=[z^m]\,\sqrt{1-z}\,e^{(n+1/2)z},\end{align*}
which reproduces~\eqref{eq:vmncoeff}.

It can also easily be checked that~\eqref{eq-Ehr-expl1} reproduces the expressions~\eqref{eq:ehrp1n}--\eqref{eq:ehrp4n} for $\ehr(\cP(m,n),t)$ with $m\le4$ and $n\ge m-1$.

\begin{remark}
Following the same approach as used in Remark~\ref{rem-vol-rec}, we can obtain a conjectural recurrence
relation for $\ehr(\cP(m,n),t)$ with $n\ge m-1$, which generalizes~\eqref{eq-vol-rec}. Specifically, the function $g(z)=\sqrt{1-tz}\,e^{(nt+t/2+1)z-tz^2/4}$,
which appears in~\eqref{eq-Ehr-expl2} (and is denoted above as $g(z,t)$), satisfies
$(1-tz)g'(z) =(t^2z^2/2-(nt+t/2+3/2)tz+nt+1)\,g(z)$.  Using this, and assuming that~\eqref{eq-Ehr-expl2} holds, then gives 
\begin{multline}\label{eq-Ehr-rec}\ehr(\cP(m,n),t)\\
=(mt+nt-t+1)\ehr(\cP(m-1,n),t)-(m-1)(nt+t/2+3/2)t\ehr(\cP(m-2,n),t)\\
+(m-1)(m-2)t^2\ehr(\cP(m-3,n),t)/2.\end{multline}
By setting~$\ehr(\cP(0,n),t)=1$, and setting $\ehr(\cP(-1,n),t)$ and $\ehr(\cP(-2,n),t)$  arbitrarily (since $\ehr(\cP(-1,n),t)$ and $\ehr(\cP(-2,n),t)$ have 
coefficients zero in the $m=1$ and $m=2$ cases of~\eqref{eq-Ehr-rec}),
it follows that~\eqref{eq-Ehr-rec} with $m\ge1$ is equivalent
to each of the equations in Conjecture~\ref{conj-Ehr-expl}.
\end{remark}

\begin{remark}\label{rem-conj-Ehr-expl}
Confirmation of the $n=m-1$ case of Conjecture~\ref{conj-Ehr-expl} 
would provide explicit answers to 
certain questions related to the parking function polytope, as follows.  As discussed in Remark~\ref{rem-parkingfunctionpolytope}, $\cP(m,m-1)$ is (up to a simple translation) the parking function polytope~$P_m$.  Stanley asked for an enumeration of the integer points in~$P_m$~\cite[Part~(c)]{AMM_problem}, and as discussed in Remark~\ref{rem-drac-parking}, 
the number of these points is the number of sequences $(b_{ij})_{1\le i<j\le m}$ in~\eqref{eq-rem-drac-parking}, 
which is also given by~\cite[Theorem~5.1]{AW}.  Taking $n=m-1$ and
$t=1$ in~\eqref{eq-Ehr-expl1} now provides an explicit conjectural expression for this number, i.e., $\frac{1}{2^m}\sum_{i=0}^{\lfloor m/2\rfloor}\sum_{j=2i}^m(-1)^{i+1}\,\binom{m}{m-j,\,j-2i,\,i,\,i}\,i!\:(2j-4i-3)!!\,(2m+1)^{m-j}$.
As shown in~\cite[Corollary~4.2]{selig}, the number of integer points in~$P_m$ is also the number of stochastically recurrent states in the stochastic sandpile model on the complete graph~$K_m$. The $n=m-1$ and $t=1$ case of~\eqref{eq-Ehr-expl1} thereby provides a conjectural answer to a question in~\cite[Section~6]{selig} asking for an explicit expression for this number.
Finally, the $n=m-1$ case of~\eqref{eq-Ehr-expl1} with~$t$ arbitrary provides an explicit conjectural answer to a question in~\cite[Problem~5.5]{HanadaLentferVindasMelendez} asking for the Ehrhart polynomial of $P_m$,
while~\eqref{eq-rem-drac-parking} provides a less explicit, but non-conjectural, answer to this question.
\end{remark}

\bibliography{biblio}
\bibliographystyle{amsplain}
	
\appendix \section{Auxiliary lemmas}
Here, we present some intermediate technical results which are used in the
proofs of Theorems~\ref{theorem:Pm4} and~\ref{thm:p4n}. 

\begin{lemma}\label{lem:aux1}
Consider the polytope
\begin{align*}
\cQ(m)&=\Conv\bigl((4\mathbf{e}_1+4\mathbf{e}_2+2\Delta_{m-2})\cup
(4\mathbf{e}_1+3\mathbf{e}_2+3\Delta_{m-2})\cup(3\mathbf{e}_1+4\mathbf{e}_2+3\Delta_{m-2})\bigr),\\
\intertext{where}
\Delta_{m-2}&=
\Conv\left(\{\mathbf{e}_0,\mathbf{e}_3,\mathbf{e}_4,\dots,\mathbf{e}_m\}\right).
\end{align*}
Then we have
\[\nvol(\cQ(m)) =2^m-3^m+m\,3^{m-1}.\]
\end{lemma}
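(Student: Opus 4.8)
The plan is to realize $\cQ(m)$ as a ``generalized prism'' over a triangle and compute its volume by a Fubini-type slicing argument, and then pass to the normalized volume via $\nvol(\cQ(m))=m!\,\vol(\cQ(m))$ (legitimate, since $\cQ(m)$ will be seen to be full-dimensional in $\RR^m$; cf.~\eqref{eq:nvol}). First I would record the parametrization of points of $\cQ(m)$. Since $\cQ(m)$ is the convex hull of the three \emph{convex} sets $A=4\mathbf{e}_1+4\mathbf{e}_2+2\Delta_{m-2}$, $B=4\mathbf{e}_1+3\mathbf{e}_2+3\Delta_{m-2}$ and $C=3\mathbf{e}_1+4\mathbf{e}_2+3\Delta_{m-2}$, every point of $\cQ(m)$ can be written as $\lambda_A\,a+\lambda_B\,b+\lambda_C\,c$ with $a\in A$, $b\in B$, $c\in C$ and $\lambda_A,\lambda_B,\lambda_C\ge0$, $\lambda_A+\lambda_B+\lambda_C=1$. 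Projecting onto the first two coordinates, the points $(4,4)$, $(4,3)$, $(3,4)$ are affinely independent, so the image of $\cQ(m)$ under this projection is the triangle $T=\Conv(\{(4,4),(4,3),(3,4)\})$, and for a point of $\cQ(m)$ whose first two coordinates are $(x_1,x_2)\in T$ the barycentric coefficients are forced: $\lambda_C=4-x_1$, $\lambda_B=4-x_2$, $\lambda_A=x_1+x_2-7$.

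The key step is to identify the fibre over each $(x_1,x_2)\in T$. Writing points of $\Delta_{m-2}$ in the coordinates $x_3,\dots,x_m$, the last $m-2$ coordinates of $\lambda_A a+\lambda_B b+\lambda_C c$ range, as $a,b,c$ vary, over the Minkowski sum $2\lambda_A\Delta_{m-2}+3\lambda_B\Delta_{m-2}+3\lambda_C\Delta_{m-2}$. Using the elementary identity $s\,\Delta_k+t\,\Delta_k=(s+t)\,\Delta_k$ for $s,t\ge0$ (immediate from $\Delta_k=\{\mathbf{y}\ge\mathbf{0}\mid\sum y_i\le1\}$), this Minkowski sum equals $(2\lambda_A+3\lambda_B+3\lambda_C)\,\Delta_{m-2}=(10-x_1-x_2)\,\Delta_{m-2}$; at the boundary of $T$ where some $\lambda$ vanishes the identity still holds, as the vanishing term only contributes $\{\mathbf{0}\}$. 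Hence $\cQ(m)$ is the union, over $(x_1,x_2)\in T$, of the dilated simplices $(10-x_1-x_2)\,\Delta_{m-2}$ sitting in the coordinate subspace spanned by $\mathbf{e}_3,\dots,\mathbf{e}_m$; in particular $\dim\cQ(m)=2+(m-2)=m$, and by Fubini
\[
\vol(\cQ(m))=\int_T \frac{(10-x_1-x_2)^{m-2}}{(m-2)!}\;dx_1\,dx_2 .
\]

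Finally I would evaluate this integral. Substituting $u=4-x_1$, $v=4-x_2$ turns $T$ into the standard $2$-simplex $\{u,v\ge0,\ u+v\le1\}$ and turns $10-x_1-x_2$ into $2+u+v$; slicing along $u+v=r$ (which carries measure $r\,dr$) gives $\int_0^1 (2+r)^{m-2}\,r\,dr=\frac{3^m-2^m}{m}-\frac{2(3^{m-1}-2^{m-1})}{m-1}$. Multiplying by $m!/(m-2)!=m(m-1)$ then yields
\[
\nvol(\cQ(m))=(m-1)(3^m-2^m)-2m(3^{m-1}-2^{m-1})=2^m-3^m+m\,3^{m-1},
\]
as claimed.

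I expect the main obstacle to be the key step of the second paragraph: one must argue carefully that fixing $(x_1,x_2)$ genuinely pins down $\lambda_A,\lambda_B,\lambda_C$ uniquely (this is exactly where affine independence of the three base points enters), that the residual freedom in $a,b,c$ produces precisely the Minkowski sum, and that the degenerate behaviour on $\partial T$ does not spoil the fibre formula. An alternative route would be to cone $\cQ(m)$ from one of its vertices and invoke Lemma~\ref{lem:coning}, but enumerating all of its facets and their (non-full-dimensional) volumes looks considerably more laborious than the slicing argument above.
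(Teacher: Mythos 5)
Your proposal is correct, and it takes a genuinely different route from the paper. The paper cones $\cQ(m)$ from the vertex $4\mathbf{e}_1+4\mathbf{e}_2+2\mathbf{e}_m$ via Lemma~\ref{lem:coning}; only two facets avoid that vertex (one congruent to $\cQ(m-1)$, the other to $\Pi(4,3)\times 3\Delta_{m-2}$), which yields the recurrence $f(m)=2f(m-1)+3^{m-2}(m-1)$ with $f(3)=8$, solved to give the closed form. So the coning route is less laborious than you feared --- there is no need to enumerate all facets, only the two opposite the chosen apex. Your slicing argument instead exploits the special structure of $\cQ(m)$ as a ``twisted prism'' over the triangle $T=\Conv(\{(4,4),(4,3),(3,4)\})$: the affine independence of the three projected base points pins down the barycentric coordinates $\lambda_A=x_1+x_2-7$, $\lambda_B=4-x_2$, $\lambda_C=4-x_1$, the identity $sK+tK=(s+t)K$ for convex $K$ identifies each fibre as $(10-x_1-x_2)\Delta_{m-2}$, and the resulting integral $\frac{m!}{(m-2)!}\int_0^1(2+r)^{m-2}r\,dr$ evaluates directly to $2^m-3^m+m\,3^{m-1}$. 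I checked the fibre identification, the Fubini step, the change of variables, and the final algebra; all are sound, and the appeal to $\nvol=m!\vol$ is justified since $\cQ(m)$ is full-dimensional. What your approach buys is a closed-form answer in one pass with no recurrence to solve and no facet volumes (such as that of $\Pi(4,3)\times 3\Delta_{m-2}$, which requires the non-full-dimensional volume conventions of Section~\ref{sec:nonfulldimvolume}) to compute; what the paper's approach buys is uniformity with the coning technique used throughout Sections~\ref{sec:volppm} and~\ref{sec:volppn} and a shorter write-up given that machinery.
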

	
\begin{proof}
Let $f(m)=\nvol(\cQ(m))$.
We will establish a recurrence relation for~$f(m)$.
Fix the vertex  $\vv=4\mathbf{e}_1+4\mathbf{e}_2+2\mathbf{e}_m$ of $\cQ(m)$.
By Lemma \ref{lem:coning}, $f(m)$ can be computed by taking pyramids whose apex is $\vv$ and 
whose bases are the facets of $\cQ(m)$ that do not contain $\vv$.
There are two such facets, as follows:
\begin{enumerate}
\item $F_1=\{\mathbf{x}\in\cQ(m)\mid x_m=0\}$.
This facet is congruent to $\cQ(m-1)$, and hence it has relative volume $f(m-1)/(m-1)!$.
Since the lattice distance from $\mathbf{v}$ to the hyperplane containing $F_1$ is~$2$, then the pyramid with base $F_1$ and apex $\mathbf{v}$ has normalized volume $f(m-1)/(m-1)!\cdot 2\cdot(1/m)\cdot m!=2f(m-1)$.
\item $F_2=\{\mathbf{x}\in\cQ(m)\mid x_1+x_2=7\}$. This facet is congruent to $\Pi(4,3)\times 3\Delta_{m-2}$, and hence it has relative volume $1\cdot3^{m-2}/(m-2)!$.
Since the lattice distance from $\mathbf{v}$ to the hyperplane containing $F_2$ is $1$, then the pyramid with base $F_2$ and apex $\mathbf{v}$ has normalized volume $3^{m-2}/(m-2)!\cdot 1\cdot(1/m)\cdot m!= 3^{m-2}(m-1)$.
\end{enumerate}
This gives the recurrence relation $f(m)=2f(m-1)+3^{m-2}(m-1)$, with the initial condition $f(3)=8$.
The solution to this is $f(m)=2^m-3^m+m\,3^{m-1}$, as required.
\end{proof}
		
\begin{lemma}
\label{lem:aux2}
Consider the polytope
\begin{align*}
\cQ(m)&= \Conv\bigl(\{4\mathbf{e}_1+3\mathbf{e}_2+3\mathbf{e}_3, 3\mathbf{e}_1+4\mathbf{e}_2+3\mathbf{e}_3, 3\mathbf{e}_1+3\mathbf{e}_2+4\mathbf{e}_3\}\cup (\Pi(4,3,2)\times\Delta_{m-3})\bigr),\\
\intertext{where}
\Delta_{m-3}&=\Conv\left(\{\mathbf{e}_0,\mathbf{e}_4,\mathbf{e}_5,\dots,\mathbf{e}_m\}\right).
\end{align*}
Then we have
\[\nvol(\cQ(m))=3m^2-6m+1.\]
\end{lemma}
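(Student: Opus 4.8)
The plan is to follow the spirit of the proof of Lemma~\ref{lem:aux1}, but to replace its coning recursion by a cleaner ``fibered'' recursion on the last coordinate. First I would convert $\cQ(m)$ into an inequality description. Recall from step~(4) of the proof of Theorem~\ref{theorem:Pm4} that $\cQ(m)$ is the closure of $\widehat{\Pi}(4,3,3,0,\dots,0)\cap\{x_1+x_2+x_3\ge 9\}$; combining this with the facet description of $\widehat{\Pi}(4,3,3,0,\dots,0)$ from step~(3) gives
\[\cQ(m)=\Bigl\{\mathbf{x}\in\RR^m \,\Bigm|\, 0\le x_i\le 4\ \forall\,i,\ \ x_i+x_j\le 7\ \forall\,i<j,\ \ \textstyle\sum_{i=1}^m x_i\le 10,\ \ x_1+x_2+x_3\ge 9\Bigr\}.\]
Isolating $x_m$, the only constraints involving it are $0\le x_m\le 4$, $x_m+x_j\le 7$ and $\sum_{i=1}^m x_i\le 10$; but $x_1+x_2+x_3\ge 9$ forces $x_m\le 10-\sum_{i=1}^{m-1}x_i\le 1$, so the first two are redundant, and the image of $\cQ(m)$ under the projection forgetting $x_m$ is exactly $\cQ(m-1)$ (the inequality $\sum_{i=1}^{m-1}x_i\le 10$ being precisely what guarantees that a nonnegative $x_m$ exists). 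Hence, for $m\ge 4$,
\[\cQ(m)=\bigl\{\mathbf{x}\in\RR^m \,\bigm|\, (x_1,\dots,x_{m-1})\in\cQ(m-1),\ \ 0\le x_m\le 10-\textstyle\sum_{i=1}^{m-1}x_i\bigr\}.\]

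Next I would turn this into a volume recursion. For $k\ge 0$ put $I_k(m)=\int_{\cQ(m)}\bigl(10-\sum_{i=1}^m x_i\bigr)^k\,d\mathbf{x}$, an $m$-dimensional Euclidean integral, so that $I_0(m)=\vol(\cQ(m))$. Integrating out $x_m$ using the fibered description gives $I_k(m)=\tfrac1{k+1}\,I_{k+1}(m-1)$ for $m\ge 4$, and iterating down to $\cQ(3)$ yields $\vol(\cQ(m))=I_0(m)=\tfrac1{(m-3)!}\,I_{m-3}(3)$; hence
\[\nvol(\cQ(m))=m!\,\vol(\cQ(m))=m(m-1)(m-2)\,I_{m-3}(3)\qquad(m\ge 3).\]

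It remains to evaluate $I_{m-3}(3)=\int_{\cQ(3)}(10-x_1-x_2-x_3)^{m-3}\,d\mathbf{x}$. Since $\Delta_0=\{\mathbf{0}\}$, we have $\cQ(3)=\Conv(T\cup H)$ with $T=\Pi(4,3,3)$ an equilateral triangle lying in $\{x_1+x_2+x_3=10\}$ and $H=\Pi(4,3,2)$ a regular hexagon lying in $\{x_1+x_2+x_3=9\}$; slicing by the value of $x_1+x_2+x_3$, the slice at level $10-u$ equals the Minkowski combination $(1-u)T+uH$ for $u\in[0,1]$. Using $\|\nabla(x_1+x_2+x_3)\|=\sqrt3$ and the planar mixed-area identity, one gets
\[I_{m-3}(3)=\frac1{\sqrt3}\int_0^1 u^{m-3}\Bigl[(1-u)^2\vol(T)+2u(1-u)\,V(T,H)+u^2\vol(H)\Bigr]\,du,\]
and the three Beta integrals turn this into $\tfrac1{\sqrt3}\bigl[\tfrac{2\vol(T)}{m(m-1)(m-2)}+\tfrac{2V(T,H)}{m(m-1)}+\tfrac{\vol(H)}{m}\bigr]$. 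Finally I would substitute $\vol(T)=\tfrac{\sqrt3}{2}$ (equilateral triangle of side $\sqrt2$), $\vol(H)=3\sqrt3$ (regular hexagon of side $\sqrt2$; as $H\cong\Pi(1,2,3)$, this agrees with Lemma~\ref{lem:permuvolume}), and $V(T,H)=\tfrac{3\sqrt3}{2}$, the last obtained from the Minkowski decompositions $T=\Delta+\mathbf{v}$ and $H=\Delta+(-\Delta)+\mathbf{w}$ with $\Delta=\Conv(\{\mathbf{e}_1,\mathbf{e}_2,\mathbf{e}_3\})$, bilinearity of mixed area, and the Rogers--Shephard equality $V(\Delta,-\Delta)=2\vol(\Delta)$ for a triangle. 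This gives $\nvol(\cQ(m))=\tfrac1{\sqrt3}\bigl[2\vol(T)+2V(T,H)(m-2)+\vol(H)(m-1)(m-2)\bigr]=1+3(m-2)+3(m-1)(m-2)=3m^2-6m+1$.

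The step I expect to be the main obstacle is the bookkeeping of this last paragraph: one must verify that no facet is lost in passing to the inequality description, that the fibered slice of $\cQ(3)$ really is $(1-u)T+uH$, and that the planar mixed area $V(T,H)$ is computed correctly. A variant staying closer to the proof of Lemma~\ref{lem:aux1} would be to cone $\cQ(m)$ from a vertex whose last coordinate equals $1$: the facet $\{x_m=0\}\cap\cQ(m)$ is congruent to $\cQ(m-1)$ and contributes $\nvol(\cQ(m-1))$, the remaining facets avoiding that vertex are congruent to products of small permutohedra and simplices whose pyramid volumes sum to $6m-9$, and one is led to $\nvol(\cQ(m))=\nvol(\cQ(m-1))+6m-9$ with $\nvol(\cQ(3))=10$, again solving to $3m^2-6m+1$; there the effort shifts to enumerating the facets not through the chosen vertex.
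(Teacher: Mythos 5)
Your argument is correct, and it reaches the value $3m^2-6m+1$ by a genuinely different route from the paper. The paper's proof cones $\cQ(m)$ from the vertex $4\e_1+3\e_2+3\e_3$ and sums the normalized volumes of four pyramids, one over each facet missing that vertex (two contributing $m-2$, one contributing $m-1$, and the facet $\Pi(4,3,2)\times\Delta_{m-3}$ contributing $3(m-1)(m-2)$), so the answer appears in one shot as $2(m-2)+(m-1)+3(m-1)(m-2)$. You instead integrate out $x_4,\dots,x_m$ one coordinate at a time: the observation that, on $\{x_1+x_2+x_3\ge 9\}$, the only binding constraints on $x_m$ are $0\le x_m\le 10-\sum_{i<m}x_i$ is correct and gives the fibration $\cQ(m)\to\cQ(m-1)$, the weighted-integral recursion $I_k(m)=\tfrac{1}{k+1}I_{k+1}(m-1)$ is right, and the reduction to Beta integrals over the slab $\Conv(\Pi(4,3,3)\cup\Pi(4,3,2))$ with slices $(1-u)T+uH$ is valid (the convex hull of two polygons in parallel hyperplanes does slice as the Minkowski interpolation). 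I checked the numerical inputs: $A(T)=\sqrt3/2$, $A(H)=3\sqrt3$ (consistent with Lemma~\ref{lem:permuvolume}), and $V(T,H)=V(\Delta,\Delta+(-\Delta))=3A(\Delta)=3\sqrt3/2$ via $V(\Delta,-\Delta)=2A(\Delta)$; these yield $1+3(m-2)+3(m-1)(m-2)=3m^2-6m+1$, and the $m=3$ base case gives $10$ as required. What your approach buys is a structural explanation of why the answer is quadratic in $m$ (the $m$-dependence enters only through three Beta integrals against the three mixed-area coefficients of the slab), at the cost of importing mixed areas and the Rogers--Shephard equality; the paper's coning argument is more elementary and parallels Lemma~\ref{lem:aux1}, but requires identifying all facets avoiding a chosen vertex. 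One presentational caveat: like the paper's own proof, you work with the polytope $\cP_3\cap\{x_1+x_2+x_3\ge 9\}$ whose vertex set includes $\Pi(4,3,2)\times\Delta_{m-3}$, which is the intended $\cQ(m)$ even though the lemma's displayed convex hull reads as only three points together with $\Delta_{m-3}$; it is worth stating explicitly that this is the polytope being measured.
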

		
\begin{proof}
We fix the vertex $\vv=4\mathbf{e}_1+3\mathbf{e}_2+3\mathbf{e}_3$ of $\cQ(m)$, and subdivide $\cQ(m)$ into pyramids whose bases are the facets not containing $\vv$. There are four such facets, as follows, where in each case, the lattice distance from~$\vv$ to the hyperplane containing the facet is~$1$:
\begin{enumerate}
\item $\{\mathbf{x}\in\cQ(m)\mid x_2=4\}$. This facet is an $(m-1)$-polytope congruent to the pyramid with apex $3\mathbf{e}_1+3\mathbf{e}_2$ and base $\Pi(3,2)\times \Delta_{m-3}$. Hence, this has relative volume $1/(m-3)!1/(m-1)$, and the associated pyramid has normalized volume $1/(m-3)!1/(m-1)\cdot1\cdot(1/m)\cdot m!=m-2$.
\item $\{\mathbf{x}\in\cQ(m)\mid x_3=4\}$. Analogously to the previous case, 
the pyramid has volume $m-2$.
\item $\{\mathbf{x}\in\cQ(m)\mid x_2+x_3=7\}$. This facet is the polytope $\Pi(4,3)\times(2\mathbf{e}_1+\Delta_{m-2})$ (with 
$\Delta_{m-2}=\textrm{ConvexHull}(\{\mathbf{e}_0,\mathbf{e}_1,\mathbf{e}_4,\mathbf{e}_5,\dots,\mathbf{e}_m\})$), so it has relative volume $1/(m-2)!$, and the associated pyramid has normalized volume $1/(m-2)!\cdot1\cdot(1/m)\cdot m!=m-1$. 
\item $\{\mathbf{x}\in\cQ(m)\mid x_1+x_2+x_3=9\}$. This facet is $\Pi(4,3,2)\times \Delta_{m-3}$, so it has relative volume $3/(m-3)!$ and the associated pyramid has normalized volume $3/(m-3)!\cdot1\cdot(1/m)\cdot m! = 3(m-1)(m-2)$.
\end{enumerate}
Combining the contributions from the four facets, gives $\nvol(\cQ(m))=
2(m-2)+(m-1)+3(m-1)(m-2)=3m^2-6m+1$, as required.
\end{proof}
		
\begin{lemma}\label{lem:aux3}
Let $n\geq 4$, and consider the polytope $\cQ$ given by
\begin{equation*}
\Conv\tiny{
\left[\begin{array}{cccccccccccccc}
n&n&n&n&n&n-1&n-1&n-1&n-1&n-1&n&n&n&n\\
n-1&n-1&n-1&n-1&n-1&n&n&n&n&n&n&n&n&n\\
n-2&n-3&n-2&0&0&n-2&n-3&n-2&0&0&n-3&n-3&0&0\\
n-3&n-2&0&n-2&0&n-3&n-2&0&n-2&0&n-3&0&n-3&0
\end{array}\right].}\end{equation*}
Then the Ehrhart polynomial of $\cQ$, minus the facet given by the
convex hull of the first ten columns in the matrix above, is
\begin{equation}
\label{eq:ehrlast}
\frac{n^2t^4}{2} + \frac{n^2t^3}{2} - \frac{7nt^4}{3} - 2nt^3 + \frac{nt^2}{3} + \frac{21t^4}{8} + \frac{23t^3}{12} - \frac{5t^2}{8} + \frac{t}{12}.
\end{equation}
\end{lemma}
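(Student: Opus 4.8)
The plan is to find a unimodular change of coordinates that displays $\cQ$ as a polytope fibering over a single coordinate, and then to count lattice points in its dilates slice by slice; the facet $\cF$ to be removed will appear as one of those slices.

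First I would record an inequality description of $\cQ$. Set $\cP_3:=\widehat{\Pi}(n,n,n-3,n-3)$. For $n\ge4$, the fourteen columns of the displayed matrix are precisely the vertices of $\cP_3\cap\{\mathbf{x}\in\RR^4\mid x_1+x_2\ge 2n-1\}$ — this is exactly the intersection analysed in Step~(4) of the proof of Theorem~\ref{thm:p4n}, via Lemma~\ref{lem:strip} and the vertex description of $\cP_3$ coming from Proposition~\ref{prop-vertices-edges}. Hence, using the facet description of $\cP_3$,
\[\cQ=\left\{\mathbf{x}\in\RR^4\;\middle|\;\begin{array}{l}0\le x_i\le n\ \text{for}\ i\in[4],\quad x_1+x_2+x_3+x_4\le 4n-6,\\[1mm] x_i+x_j+x_k\le 3n-3\ \text{for all}\ 1\le i<j<k\le 4,\quad x_1+x_2\ge 2n-1\end{array}\right\}.\]
I would then apply the unimodular substitution $(a,b,c,d)=(n-x_1,\ x_1+x_2-2n+1,\ x_3,\ x_4)$, with inverse $x_1=n-a$, $x_2=a+b+n-1$, $x_3=c$, $x_4=d$. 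On the region $b\ge0$, $a+b\le1$ the inequalities $x_1\ge0$, $x_2\ge0$, $x_3\le n$, $x_4\le n$ and the two triple sums containing both $x_3$ and $x_4$ all become redundant, and one checks that the remaining ones transform $\cQ$ into the lattice-equivalent polytope
\[\cR:=\left\{(a,b,c,d)\in\RR^4\;\middle|\;a,b,c,d\ge0,\ a+b\le1,\ c\le n-2-b,\ d\le n-2-b,\ c+d\le 2n-5-b\right\},\]
with the facet $\cF$ (the convex hull of the first ten columns, cut out by $x_1+x_2=2n-1$) corresponding to $\cR\cap\{b=0\}$.

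Next I would compute $\ehr(\cR,t)$ by slicing over the $b$-coordinate. A lattice point $(A,B,C,D)\in t\cR\cap\mathbb{Z}^4$ amounts to a choice of $B\in\{0,1,\dots,t\}$, then of $A\in\{0,\dots,t-B\}$ (free, $t-B+1$ choices), and then of a lattice point $(C,D)$ with $0\le C,D\le (n-2)t-B$ and $C+D\le(2n-5)t-B$; this last set is the lattice-point set of the pentagon $\widehat{\Pi}\bigl((n-2)t-B,\,(n-3)t\bigr)$. Since for integers $p\ge q\ge0$ one has $|\widehat{\Pi}(p,q)\cap\mathbb{Z}^2|=(p+1)^2-\binom{p-q+1}{2}$ — subtract, from the $(p+1)^2$ lattice points of $[0,p]^2$, the $\binom{p-q+1}{2}$ ones with $C+D>p+q$ — this yields
\[\ehr(\cR,t)=\sum_{B=0}^{t}(t-B+1)\left(\bigl((n-2)t-B+1\bigr)^2-\binom{t-B+1}{2}\right).\]
The $B=0$ summand is exactly $\ehr(\cF,t)$ (the lattice points of $t\cF$ being those with $B=0$), so $\ehr(\cQ,t)-\ehr(\cF,t)$ equals the same sum restricted to $1\le B\le t$; substituting $j=t-B$, this is $\sum_{j=0}^{t-1}(j+1)\bigl(((n-3)t+j+1)^2-\binom{j+1}{2}\bigr)$. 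Expanding the summand into a linear combination of $(j+1)$, $(j+1)^2$ and $(j+1)^3$ and invoking the standard closed forms for $\sum_{j=0}^{t-1}(j+1)^k=\sum_{i=1}^{t}i^k$ with $k=1,2,3$ turns this into an explicit polynomial in $n$ and $t$, which a routine simplification identifies with~\eqref{eq:ehrlast}.

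The one real obstacle is bookkeeping rather than ideas: getting the unimodular change of coordinates and the resulting minimal inequality description of $\cQ$ exactly right — in particular checking which of the four triple-sum inequalities survive — and keeping the pentagon count and the range of $B$ straight. After that the computation is mechanical; I would double-check it by matching low-order coefficients against~\eqref{eq:ehrlast} and by confirming that the outcome is consistent with the expression displayed in Step~(4) of the proof of Theorem~\ref{thm:p4n}.
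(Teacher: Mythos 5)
Your proposal is correct, and it takes a genuinely different route from the paper. The paper's proof subdivides $\cQ$ geometrically, guided by a subdivision of the pentagonal cross-section in the $(x_3,x_4)$-directions (its Figure of the pentagon), into pieces that are prisms, Cartesian products and lattice pyramids, computes the Ehrhart polynomial of each piece together with the necessary corrections for shared faces, and explicitly leaves the final bookkeeping to the reader. You instead pass to the halfspace description $\cQ=\widehat{\Pi}(n,n,n-3,n-3)\cap\{\mathbf{x}\mid x_1+x_2\ge 2n-1\}$ (which is legitimate: Step~(4) of the proof of Theorem~\ref{thm:p4n} identifies the fourteen listed points as the vertices of exactly this intersection via Lemma~\ref{lem:strip}), apply a unimodular affine change of coordinates, and count lattice points of the dilates slice by slice. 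I checked the details: the coordinate change is unimodular, the redundancy analysis of the transformed inequalities on the region $a,b\ge0$, $a+b\le1$ is correct, the slice count $\bigl((n-2)t-B+1\bigr)^2-\binom{t-B+1}{2}$ is the right pentagon count (with $p-q=t-B\ge0$ for all admissible $B$), and the resulting sum $\sum_{i=1}^{t}i\bigl(((n-3)t+i)^2-\binom{i}{2}\bigr)$ does evaluate to~\eqref{eq:ehrlast}. A particularly clean feature of your setup is that removing the facet $\cF$ corresponds exactly to dropping the $B=0$ slice, so the half-open correction is automatic rather than requiring a separate Ehrhart computation for $\cF$. What your approach buys is a single closed-form sum that can be verified mechanically; what the paper's approach buys is a geometric decomposition consistent with the sculpting philosophy used throughout Sections~\ref{sec:volppn} and~\ref{sec:Ehrhart}, at the cost of more inclusion--exclusion bookkeeping.
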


\begin{proof}
Note that $\cQ$
is almost the Cartesian product of the triangle with vertices $(n,n)$, $(n,n-1)$ and $(n-1,n)$,
and the pentagon with vertices $(n-3,n-2)$, $(n-2,n-3)$, $(n-2,0)$, $(0,n-2)$ and $(0,0)$.
It is not so, since for the vertex $(n,n)$ of the triangle, instead of the pentagon we have a shrunk version which is the square with vertices $(n-3,n-3)$, $(n-3,0)$, $(0,n-3)$ and $(0,0)$.

To compute the Ehrhart polynomial, we divide and conquer.
We start by subdividing the pentagon as shown in Figure \ref{fig:pentagon}.

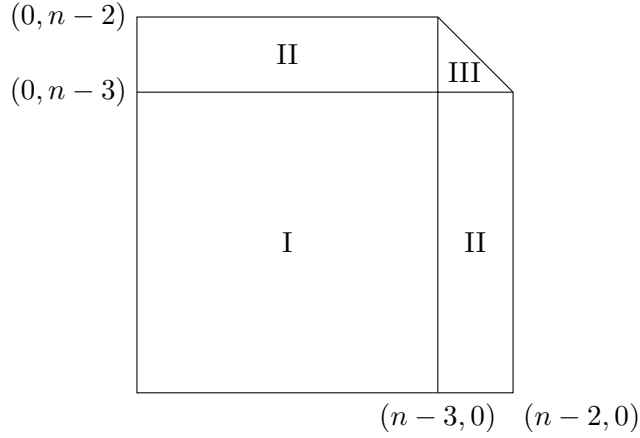
\begin{figure}[ht]
\centering
\begin{tikzpicture}

\node[below] at (4,0) {$(n-3,0)$};
\node[below right] at (5,0) {$(n-2,0)$};
\node[left] at (0,4) {$(0,n-3)$};
\node[left] at (0,5) {$(0,n-2)$};

\node at (2,2) {I};
\node at (4.5,2) {II};
\node at (2,4.5) {II};
\node[above right] at (4,4) {III};

\draw (0,0)--(5,0)--(5,4)--(4,5)--(0,5)--cycle;
\draw (4,0)--(4,5) (0,4)--(5,4);

\end{tikzpicture}
\caption{The spirit of the argument in the proof of Lemma~\ref{lem:aux3}.}
\label{fig:pentagon}
\end{figure}

Based on this, we subdivide $\cQ$ into the following pieces.
\begin{enumerate}
\item The first piece, labeled I in Figure \ref{fig:pentagon}, is the polytope 
\[
\textrm{ConvexHull}\begin{bmatrix}n&n-1&n\\n-1&n&n \end{bmatrix}\times [0,n-3]^2.
\]
\item The second piece, labeled II on the right in Figure \ref{fig:pentagon}, is the polytope given as the prism of length $n-3$ over the pyramid
\[
\textrm{ConvexHull}
\left[\begin{array}{ccccc}
n&n-1&n&n-1&n\\
n-1&n&n-1&n&n\\
n-3&n-3&n-2&n-2&n-3
\end{array}\right],\]
which is a lattice pyramid with apex given by the fifth column and whose base is the square formed by the first four columns.
The other piece labeled II is congruent to this one.
\item The third piece, labeled III in Figure \ref{fig:pentagon}, is a lattice pyramid with apex $(n,n,n-3,n-3)$ and whose base is the prism
\[\textrm{ConvexHull}\begin{bmatrix}n&n-1\\n-1&n\end{bmatrix}\times \textrm{ConvexHull}\begin{bmatrix}n-3&n-3&n-2\\n-3&n-2&n-3\end{bmatrix}.\]
\end{enumerate}

All the pieces are obtained by combinations of prisms, Cartesian products and lattice pyramids, so we can compute each of their Ehrhart polynomials and their intersections. We leave 
the details to the reader.
\end{proof}
\end{document}